\renewcommand{\thefootnote}
\newtheorem{lemma}{Lemma}[section]
\newtheorem{proposition}{Proposition}[section]
\newtheorem{theorem}{Theorem}[section]
\newtheorem{corollary}{Corollary}[section]
\theoremstyle{definition}
\newtheorem{example}[theorem]{Example}
\theoremstyle{remark}
\newtheorem{remark}[theorem]{Remark}
\numberwithin{equation}{section}
\newcommand*{\be}{\begin{equation}}
\newcommand*{\ee}{\end{equation}}
\newcommand{\nn}{\nonumber}
\providecommand{\abs}[1]{\lvert#1\rvert}
\newcommand*{\fl}[1]{\lfloor{#1}\rfloor}
\def\ddd{\displaystyle}
\def\ind{\mathbf{1}}
\DeclareMathOperator{\Var}{Var}
\DeclareMathOperator{\Cov}{Cov}
\newcommand{\limd}{\overset{\mathcal{D}}{\longrightarrow}} 
\def\abar{\bar{a}}
\def\qbar{\bar{q}}
\def\e{\varepsilon}
   \def\cN{\mathcal{N}}
 \def\bE{\mathbb{E}}
\def\bN{\mathbb{N}}  \def\N{\mathbb{N}}  
\def\bP{\mathbb{P}}  
\def\bR{\mathbb{R}}  \def\R{\mathbb{R}}  \def\bbR{\mathbb{R}}
\def\bZ{\mathbb{Z}}
\def\Zb{\mathbb Z}  \def\Z{\mathbb Z} 
\def\Rb{\mathbb R}
\def\Nb{\mathbb N}
\def\E{\bE}  \def\P{\bP}
\def\kS{{\mathfrak S}}
  \def\mE{\mathbf{E}}  \def\mP{\mathbf{P}}
   \def\thvec{\boldsymbol{\theta}}
\def\Xbar{\overline{X}}
\def\Ybar{\overline{Y}}  \def\Hbar{\bar{H}}
\def\Xtil{\tilde{X}}
\newcommand{\indd}[1]{ \mathbf{1}{ \{ #1 \} } } 
      \def\si{\sigma}  \def\la{\lambda} 
\def\om{\omega}
\def\sqn{\sqrt{n}}
\def\w{\omega}
\def\om{\omega}
\def\s{\sigma}
\def\Ev{\mathbf E}
\def\Pv{\mathbf P}  
\def\Vv{{\text{\bf Var}}} 
\newcommand{\vp}{\mathrm{v}_P}     
\def\evar{\s_0}  
\def\emean{\mu_0}   \def\dc{\emean}  
\def\wt{\widetilde}  \def\wh{\widehat} \def\wb{\overline}
\def\pcount{I}
\def\flux{H}  
\def\Qlb{m_Q}  
\def\lbla{a}  
\def\lblb{b} 
\def\bias{\theta}
\def\jr{g}  
\def\rr{\jr}  
\begin{document}
 
\renewcommand{\thepage}

\vskip .2in




\vskip 1in

\setcounter{page}{1}

\noindent {\huge\bf {Current fluctuations for}}


\vskip .13in \noindent {\huge\bf {stochastic particle systems with}}


\vskip .1in \noindent {\huge\bf  {drift in one spatial dimension}}

\thispagestyle{empty}

\vskip .4in
\noindent {\Large\bf {Timo Sepp\"al\"ainen }}

\vskip .8in

\footnote{{\bf 2000 Mathematics Subject Classification: } 60K35,  60F05, 60K37.}
\noindent {\bf Abstract.\,} {This review article discusses limit distributions and 
variance bounds for particle current  in several dynamical stochastic systems of particles
on the one-dimensional integer lattice:   independent particles,
independent particles in a random environment, the random average
process,  the asymmetric
simple exclusion process, and a class of totally asymmetric zero range processes.
The  first three models possess linear macroscopic flux functions
and  lie in the Edwards-Wilkinson universality class with   scaling exponent $1/4$ for current fluctuations.  For these we prove 
Gaussian limits for the current process.  The latter two systems
belong to the Kardar-Parisi-Zhang class.    For these
we prove the scaling exponent $1/3$ in the form of upper and lower 
variance bounds.}

\newpage
\tableofcontents
\newpage
\renewcommand{\thefootnote}{\arabic{footnote}}
\setcounter{footnote}{0}
\renewcommand{\thepage}{\arabic{page}}
\renewcommand{\thesection}{\arabic{section}}
\setcounter{section}{0}


\pagestyle{myheadings} \markboth{\sf Timo Sepp\"al\"ainen}{\sf Current fluctuations}




\section{Introduction}
\setcounter{equation}{0}
This review article investigates  the process of particle current in several conservative 
stochastic systems 
of particles that live on the one-dimensional 
integer lattice. In conservative systems particles are neither created nor
destroyed.  On the macrosopic, deterministic scale the particle density  $\rho(t,x)$
of  such systems is governed
by a partial differential equation of   scalar conservation law type: $\rho_t+\flux(\rho)_x=0$.  
   The results covered in this article  concern the fluctuation behavior of the current,
and include some precise limit results and some coarser order-of-magnitude
bounds.   Some background on advanced probability theory is assumed.  
For readers with little probability background we can also suggest
review article \cite{sepp-review-08} where an attempt was made to explain
some of this same material for a reader with background in analysis rather
than probability. 

  The particle processes studied fall into two categories.  We can define
  these two categories    by the slope of the {\sl flux function} $\flux$: 
  (i)  processes with a linear
flux and (ii)  those with a strictly concave flux.  By definition, the flux $\flux(\rho)$ 
is the mean rate at which particle mass moves past a fixed point in space when
the system is stationary in both space and time  and  has overall density $\rho$ of particles.  
The processes we study are {\sl asymmetric} or {\sl driven}  in the sense that the particles have  a
drift, that is, a  preferred
average direction.  This assumption is not necessary for the results  
for systems with
linear flux, but for those with nonlinear flux it is crucial.  

In the statistical physics  terminology   of surface growth, we can also label these
two classes as (i)  the EW (Edwards-Wilkinson) and (ii)  the KPZ (Kardar-Parisi-Zhang)
universality classes \cite{bara-stan}.   Surface growth may seem at first  a separate topic from particle 
systems.  But in one dimension conservative particle systems can be equivalently
formulated as interface models.  The connection goes by way of regarding the
particle occupation numbers $\eta_i$  as increments, or discrete gradients, of the interface height
function:  $h_i=\eta_i-\eta_{i-1}$.   Then any movement of particles in the conservative particle system
 can be equivalently described as deposition
or removal of particles from the growing surface.   In particular, the current process
then maps directly into the height function.  
 
 In the EW class limiting current/height fluctuations are described by the linear stochastic heat
 equation   $Z_t =\nu  Z_{xx} +    \dot W$  where subscripts are partial derivatives 
and   $\dot W$ represents space-time  white noise.    
 In the microscopic model the order of magnitude of current fluctuations  
is $n^{1/4}$ in terms of a scaling parameter $n$ ($n\nearrow\infty$) that gives both the space 
and time scale.  The results  we give describe Gaussian limit distributions of the current
process.  The examples of systems with linear flux we cover are  independent random walks,
independent random walks in a random environment (RWRE), and  the random average
process. 

 In the RWRE case we in particular wish to understand how the environment
affects the fluctuations.   We find a two-level fluctuation picture.  On the diffusive
$n^{1/2}$ scale the quenched mean of the current converges to a Brownian motion. 
Around the quenched mean,  
on the   $n^{1/4}$ scale,  the  current   converges 
  to the Gaussian processes   that arose in the case of 
independent classical random walks, but with an additional Brownian random spatial
shift.  

 The systems with concave flux that we discuss are  the asymmetric
simple exclusion process (ASEP)  and a class of zero range processes.  In this case the 
assumption of asymmetry is necessary.  We consider only stationary systems 
(or small perturbations thereof), and instead of distributional limits we give only
bounds on the variance of the current and  on the moments of a second class  particle. 
   The current fluctuations are now of order
$t^{1/3}$.  This order of magnitude  goes together with superdiffusivity of the second class particle 
whose fluctuations are of order $t^{2/3}$.    Here $t$ is the time parameter of the
process. A separate scaling parameter is not needed since we have no process level
result.  These
systems are in the KPZ class. 

We cannot cover complete proofs for all   results to keep  this article at a reasonable length. 
   The most important result, namely the moment bounds for 
the second class particle in ASEP, is proved in full detail,  assuming some basic
facts about ASEP.

 In some sense interface height in the KPZ class
should be described by the KPZ equation
\be   h_t= \nu h_{xx} - \lambda (h_x)^2 + \dot W.  \label{kpz}\ee
   However, giving mathematical meaning to 
this equation has been done only indirectly.  A formal Hopf-Cole transformation 
$Z=\exp(-\lambda\nu^{-1}h)$ converts \eqref{kpz}  into a stochastic heat equation
with multiplicative noise:   $Z_t=\nu  Z_{xx} +  \lambda\nu^{-1} Z  \dot W$.  
The solution of this latter equation is well-defined, and can furthermore be obtained
as a   limit of an appropriately scaled height function of a weakly asymmetric simple
exclusion process \cite{bert-giac-97}.  Via this link the scaling exponents 
have recently been verified for the Hopf-Cole solution of the KPZ equation \cite{bala-quas-sepp}. 

The most glaring omission of this article  is that we do not treat the Tracy-Widom fluctuations
of KPZ systems such as  TASEP (totally asymmetric simple exclusion process), 
ASEP or the PNG (polynuclear growth) model.  
 In the ASEP section  we only 
state briefly the Ferrari-Spohn theorem \cite{ferr-spoh-06}  on the limit distribution of the current across
a characteristic in stationary TASEP. 
This area  is advancing rapidly and  requires a review article  of its own if any degree of
detail is to be covered. 
Tracy and Widom have recently written a series of papers where the fluctuation limits
originally proved for TASEP by Johansson  \cite{joha} 
have been extended to ASEP \cite{trac-wido-08jsp, trac-wido-08cmp,  trac-wido-09cmp,  
trac-wido-09jmp, trac-wido-10}.   Another line of recent 
work extends the TASEP limits to more general
space-time points and initial distributions
\cite{baik-ferr-pech-10, corw-ferr-pech-10}.  

\medskip
 
{\sl A notational convention.}  $\bZ_+=\{0,1,2,3,\dotsc\}$ and $\bN=\{1,2,3,\dotsc\}$.

{\sl Acknowledgements.}  
This article is based on lecture notes for a minicourse  
given at the  13th Brazilian School of Probability,  August 2-8, 2009,
at Maresias,  
S\~ao Paulo, and  again at the
University of Helsinki, Finland,  on   August 18--20, 2009.   The author
thanks the organizers and the audiences of these two occasions.
The material comes from  collaborations  with 
M\'arton Bal\'azs,  Mathew Joseph, J\'ulia  Komj\'athy,  Rohini Kumar, Jon Peterson and 
Firas Rassoul-Agha.  
The author   is grateful for financial support  from the 
National Science Foundation   through grant DMS-0701091 and from the Wisconsin Alumni Research
Foundation.

\section{Independent particles executing classical \\random walks} \label{iidch}
\setcounter{equation}{0}
\subsection{Model and results}

Fix a probability distribution $p(x)$ on $\bZ$.  Let us assume for
simplicity that $p(x)$ has finite range, that is, 
\be   \text{ the set $\{x\in\bZ: p(x)>0\}$ is finite.}   \label{pass1}\ee
For each site $x\in\bZ$ and index $k\in\bN$
let  $X^{x,k}_\centerdot $ be a discrete-time random walk 
with   initial point $X^{x,k}_0=x$ and transition probability 
\[  P(X^{x,k}_{s+1}=y\,\vert\,X^{x,k}_s=z)=p(y-z) \]
for times $s\in\bZ_+$ and space points  $z,y\in\bZ$.  
The   walks
$\{X^{x,k}_\centerdot:  x\in\bZ, k\ge 1\}$ are independent of each other. 
Let 
\[ v=\sum_{x\in\bZ}  xp(x) 
\quad\text{and}\quad   \sigma_1^2=\sum_{x\in\bZ}  (x-v)^2p(x)\] 
be the mean speed and variance of the walks.

At time $0$ we start a random number $\eta_x(0)$ of particles at site $x$. 
The assumption is that the initial occupation variables 
$\eta(0)=\{\eta_x(0):x\in\bZ\}$ are i.i.d.\ (independent and identically
distributed)  with finite mean and variance
\be
\emean=E[\eta_x(0)]  \quad\text{and}\quad  
\evar^2=\Var[\eta_x(0)].   \label{eta1}\ee
Furthermore, the variables $\{\eta_x(0)\}$ and 
  the walks 
$\{X^{x,k}_\centerdot \}$ are independent of each other. 
If the locations of individual particles are not of interest but only
the overall particle distribution, the
particle configuration at time $s\in\bN$  is
 described by the occupation variables 
$\eta(s)=\{\eta_x(s):x\in\bZ\}$ defined as
\[\eta_x(s)=\sum_{y\in\mathbb Z}\sum_{k=1}^{\eta_0(y)}{\bf 1}\{X^{y,k}_s=x\}. \]  

A basic fact is that if the initial occupation variables
are i.i.d.\ Poisson with common mean $\mu$, then so are 
the occupation $\{\eta_x(s):x\in\bZ\}$  at any fixed time $s\in\bZ_+$.  
This  invariance   follows 
   easily from basic properties of Poisson distributions:  the counts of
particles that move from $x$ to $y$ are independent Poisson variables  
across all pairs $(x,y)$.  

 Let $E^\mu$ denote expectation under the stationary process with mean 
 $\mu$ occupations.  
The flux function is the expected rate of flow in the stationary process: 
\be\begin{aligned}
\flux(\mu)&=E^\mu [\text{net number of particles that jump across edge $(0,1)$}\\ 
     &\qquad\qquad \qquad  \text{left to
right in one time step}] \\
&=E^\mu \biggl[\;\sum_{x\le 0}\sum_{k=1}^{\eta_0(x)}{\bf 1}\{X^{x,k}_1\ge 1\} 
\;-\;  \sum_{x> 0}\sum_{k=1}^{\eta_0(x)}{\bf 1}\{X^{x,k}_1\le 0\} \biggr]\\
&=v\mu.
 \end{aligned}\label{fluxex}\ee
 
The linear flux puts this system in the class where we expect $n^{1/4}$ 
magnitude current fluctuations.   Next we define the current process 
and the Gaussian process that describes the limit.  
 
Let $n\in\bN$ denote a scaling parameter that eventually goes to $\infty$.
For (macroscopic) times $t\in\bR_+$ and a spatial variable $r\in\bR$, let 
   \be Y_n(t,r)=Y_{n,1}(t,r)-Y_{n,2}(t,r) \label{Y}\ee with 
\be Y_{n,1}(t,r)=\sum_{x\le 0} \sum_{k=1}^{\eta_x(0)} 
\mathbf{1}\lbrace X^{x,k}_{\fl{nt}}
> \fl{ntv}+r\sqrt n\,\rbrace\label{Y1}\ee
and 
\be Y_{n,2}(t,r)=\sum_{x> 0} \sum_{k=1}^{\eta_x(0)} 
\mathbf{1}\lbrace X^{x,k}_{\fl{nt}}
\le  \fl{ntv}+r\sqrt n\,\rbrace.  \label{Y2}\ee
Variable  $Y_n(t,r)$ represents  the net 
left-to-right  current of particles  
seen  by an observer who starts at the origin  and 
reaches point  $\fl{ntv}+\fl{r\sqrt n}$  in  time $\fl{nt}$.  
Its mean is 
\[  EY_n(t,r)= \,\emean E\bigl( X_{\fl{nt}} - \fl{ntv}- \fl{r\sqrt n\,}\bigr) = - \emean r\sqrt n +O(1).  \]
Define the centered and appropriately scaled process by
\[  \Ybar_n(t,r) = n^{-1/4}\bigl( Y_n(t,r) - EY_n(t,r)\bigr).  \]
 
 The goal is  to prove a limit for the joint distributions of these random variables.    
We will not tackle    process-level convergence.  But
 let us point out that there is a natural path space $D_2$ of functions
 of two parameters $(t,r)$ that contains the paths of the processes $Y_n$. 
Elements of $D_2$ are    continuous from above with limits
 from below in a suitable way, and there is a metric that generalizes 
 the standard Skorohod topology of the usual $D$-space of cadlag paths. 
 (See \cite{bick-wich, kuma-08}.) 

 Let 
 \be \varphi_{\nu^2}(x)=\frac1{\sqrt{2\pi\nu^2}}\exp\Bigl(-\frac{x^2}{2\nu^2}\Bigr)
\quad\text{and}\quad  
 \Phi_{\nu^2}(x)=\int_{-\infty}^{x}\varphi_{\nu^2}(y)dy  \label{gauss}\ee
denote the centered Gaussian density with variance $\nu^2$ 
and 
its distribution function. 
 Let $W$ be a two-parameter 
Brownian motion on $\bR_+ \times \bR$ and $B$  a 
two-sided one-parameter Browian motion on $\bR$.
 $W$ and $B$ are independent.  Define the process $Z$ by 
\be\begin{aligned}
 Z(t,r)&= \sqrt{\emean} \iint_{[0,t]\times \bR} 
\varphi_{\sigma_1^2(t-s)}(r-x)\,dW(s,x) \\
&\qquad  +\evar\int_{\bR}
 \varphi_{\sigma_1^2t}(r-x)B(x)\,dx.
\end{aligned}  \label{Zdef}\ee
 $\{Z(t,r): t\in \bR_{+},r\in \bR\}$ is a mean zero  
Gaussian process. Its  covariance can be expressed as follows:
with 
\be \Psi_{\nu^2}(x)=\nu^2\varphi_{\nu^2}(x)
-x\big(1-\Phi_{\nu^2}(x)\big)\label{Psi}\ee
define 
\be \Gamma_1\bigl((s,q),(t,r)\bigr)=\Psi_{\sigma_1^2 (t+s)}(r-q ) - \Psi_{\sigma_1^2 \abs{t-s}}(r-q ) 
 \label{Ga1}\ee
and
\be \Gamma_2\bigl((s,q),(t,r)\bigr)= \Psi_{\sigma_1^2 s}(-q)+\Psi_{\sigma_1^2 t}(r)-\Psi_{\sigma_1^2 (t+s)}(r-q).  \label{Ga2}\ee
Then 
\be \mE[Z(s,q)Z(t,r)]=\emean\Gamma_1\big((s,q),(t,r)\big)+
\evar^2\Gamma_2\big((s,q),(t,r)\big). \label{Zcov}\ee
 
\begin{remark}
Some comments on Gaussian processes defined above.  
Comparison of \eqref{Zdef} and \eqref{Zcov} shows that 
$\Gamma_1$ is the covariance of the dynamical fluctuations represented
by the space-time white noise $dW$-integral,  and $\Gamma_2$ is the
covariance of the contribution of the initial fluctuations  represented by the
Brownian motion $\evar B(\cdot)$. 

 $\Gamma_1$ has this alternative formula:
 \be  \Gamma_1\bigl((s,q),(t,r)\bigr) = 
 \tfrac12  \int_{\s_1^2\abs{t-s}}^{\s_1^2(t+s)} 
\frac1{\sqrt{2\pi v}} \exp\Bigl\{  \frac1{2v}(r-q)^2\Bigr\} \,dv.  \label{Ga1b}\ee
To verify that  the process 
\[  \zeta(t,r)=     \iint_{[0,t]\times \bR} 
\varphi_{\sigma_1^2(t-s)}(r-x)\,dW(s,x) \]
 has covariance 
\be  \mE \zeta(s,q)\zeta(t,r) = \Gamma_1\bigl((s,q),(t,r)\bigr)  \label{Ucov1}\ee
  is  straightforward from the   general property that 
for $f,g\in L^2(\bR^d)$ the white-noise integrals $\int f\,dW$ and 
$\int g\,dW$ on $\bR^d$ are by definition mean zero Gaussian random variables that satisfy 
\[    \mE \Bigl[ \Bigl( \int f\,dW \Bigr)   \Bigl( \int g\,dW\Bigr) \Bigr]
= \int_{\bR^d}  f(x)g(x)\,dx.  \]
\label{stheateqex1} 
 
To say that  $B(x)$ is a (standard)  two-sided Brownian motion  means that we take two
independent standard Brownian motions $B_1$ and $B_2$ and set
\[  B(x)=\begin{cases}  B_1(x),  &x\ge 0\\  B_2(-x), &x<0. \end{cases}\]
To show that the process
\[ \xi(t,r)= \int_{\bR}
 \varphi_{\sigma_1^2t}(r-x)B(x)\,dx \]
is a mean-zero Gaussian process with    covariance 
\[   \mE \xi(s,q)\xi(t,r)= \Gamma_2\bigl( (s,q),(t,r)\bigr) \]  
  this formula is useful:  if $f,g$ are absolutely continuous functions on $\bR_+$
such that  $xf(x)\to 0$ and $xg(x)\to 0$ as $x\to\infty$, then 
\[  \iint_{\bR_+^2} f'(x)g'(y)(x\wedge y)\,dx\,dy = \int_0^\infty f(x)g(x)\,dx. \]

It is also the case that the process  $Z(t,r)$ is the unique weak solution of the following initial value
problem for  a  linear stochastic heat equation on $\bR_+\times\bR$:
\be 
Z_t = \tfrac{\s_1^2}2 Z_{rr} +  \sqrt{\emean} \, \dot W \,, \quad
Z(0,r)=\s_0 B(r). 
\notag\label{stheateq2}\ee
(Above, subscript means partial derivative.)  
A weak solution of this equation is defined by the requirement 
  \begin{align*}   &\int_{\R}  \phi(r) Z(t,r)\,dr -  \s_0 \int_{\R}  \phi(r) B(r)\,dr \\
 & =\; \tfrac{\sigma_1^2}2\,  \iint_{[0,t]\times\Rb} \phi''(r) Z(s,r) \,dr\,ds 
\; + \; \sqrt\emean\, \iint_{[0,t]\times\Rb} \phi(r)  d{W}(s,r)
\end{align*} 
for all $\phi\in C^\infty_c(\bR)$  (compactly supported, infinitely differentiable).  
See the lecture notes of Walsh \cite{wals-spde}. 
\end{remark}

We can now state the main result. 

\begin{theorem}
\label{main_thm}
 Assume
the initial occupation variables are i.i.d.\ with finite mean and variance
as in \eqref{eta1}. 
 Then  as $n\to\infty$,   the finite-dimensional distributions 
 of the  process $\{\Ybar_n(t,r): (t,r)\in\Rb_+\times\Rb \}$ converge weakly 
 to the finite-dimensional distributions 
of the   mean zero Gaussian
process $\{Z(t,r): (t,r)\in\Rb_+\times\Rb \}$.   
 \end{theorem}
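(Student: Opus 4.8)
The idea is to split $Y_n$ into the contribution of the randomness of the walks and the contribution of the randomness of the initial occupations $\eta(0)$, to show that these produce respectively the space--time white noise integral and the $B$-integral in \eqref{Zdef}, and to check that in the limit the two are independent Gaussian fields with covariances $\emean\Gamma_1$ and $\evar^2\Gamma_2$. Since only convergence of finite-dimensional distributions is asserted, there is no tightness to prove: it suffices to show, for arbitrary fixed points $(t_1,r_1),\dots,(t_m,r_m)$, that $\bigl(\Ybar_n(t_i,r_i)\bigr)_{i\le m}$ converges in law to $\bigl(Z(t_i,r_i)\bigr)_{i\le m}$, and I will do this with characteristic functions. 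Write $Y_n(t,r)=\sum_{x\in\bZ}\sum_{k=1}^{\eta_x(0)}\xi^{x,k}_{t,r}$, where $\xi^{x,k}_{t,r}=\mathbf{1}\{X^{x,k}_{\fl{nt}}>\fl{ntv}+r\sqn\}$ for $x\le0$ and $\xi^{x,k}_{t,r}=-\mathbf{1}\{X^{x,k}_{\fl{nt}}\le\fl{ntv}+r\sqn\}$ for $x>0$; conditionally on $\eta(0)$ these summands are independent and bounded by $1$ in absolute value. Set $\Ybar_n=A_n+C_n$, with $A_n(t,r)=n^{-1/4}\bigl(Y_n(t,r)-E[Y_n(t,r)\mid\eta(0)]\bigr)$ the \emph{dynamical} part and $C_n(t,r)=n^{-1/4}\bigl(E[Y_n(t,r)\mid\eta(0)]-EY_n(t,r)\bigr)$ the \emph{initial} part; note $C_n$ is $\sigma(\eta(0))$-measurable and $E[A_n(t,r)\mid\eta(0)]=0$, so $A_n$ and $C_n$ are uncorrelated.

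\textbf{The initial part.} By translation invariance of the walk, $E[Y_n(t,r)\mid\eta(0)]=\sum_x\eta_x(0)\,g_n^{t,r}(x)$, where $g_n^{t,r}(x)=P(X^0_{\fl{nt}}>\fl{ntv}+r\sqn-x)$ for $x\le0$ and $g_n^{t,r}(x)=-P(X^0_{\fl{nt}}\le\fl{ntv}+r\sqn-x)$ for $x>0$, so $C_n(t,r)=n^{-1/4}\sum_x(\eta_x(0)-\emean)\,g_n^{t,r}(x)$ is a weighted sum of i.i.d.\ centered variables. Since $p$ has finite range the walk has exponential moments, and the central limit theorem gives $g_n^{t,r}(\fl{y\sqn})\to h_{t,r}(y):=1-\Phi_{\sigma_1^2t}(r-y)-\mathbf{1}\{y>0\}$ for each $y\ne0$, with $|g_n^{t,r}(\fl{y\sqn})|$ bounded uniformly in $n$ by an integrable Gaussian-tail envelope in $y$. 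The Lindeberg CLT --- applicable because $\Var\eta_0(0)<\infty$ and $\max_x|g_n^{t,r}(x)|\le1$ while $\sum_x\bigl(g_n^{t,r}(x)\bigr)^2$ grows like a constant times $\sqn$ --- together with the Cram\'er--Wold device shows that $\bigl(C_n(t_i,r_i)\bigr)_i$ is asymptotically centered Gaussian with covariance $E[C_n(s,q)C_n(t,r)]=\evar^2 n^{-1/2}\sum_x g_n^{s,q}(x)g_n^{t,r}(x)$; by dominated convergence and a Riemann-sum argument this converges to $\evar^2\int_\bR h_{s,q}(y)h_{t,r}(y)\,dy$, and a Gaussian integral computation --- carried out conveniently with the identity $\iint_{\bR_+^2}f'(x)g'(y)(x\wedge y)\,dx\,dy=\int_0^\infty f(x)g(x)\,dx$ recalled in the remark above --- identifies this limit with $\evar^2\Gamma_2\bigl((s,q),(t,r)\bigr)$.

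\textbf{The dynamical part.} Condition on $\eta(0)$: then $Y_n(t,r)-E[Y_n(t,r)\mid\eta(0)]$ is a sum of independent, centered, $[-1,1]$-valued variables; after scaling by $n^{-1/4}$ each summand is $O(n^{-1/4})$, so the conditional Lindeberg condition holds deterministically for large $n$, and conditionally on $\eta(0)$ the vector $\bigl(A_n(t_i,r_i)\bigr)_i$ is asymptotically Gaussian with conditional covariance matrix $\Sigma_n(\eta(0))$, whose entry for the pair $\bigl((s,q),(t,r)\bigr)$ is $n^{-1/2}\sum_x\eta_x(0)\,c_n^x$ with $c_n^x=\Cov\bigl(\xi^{x,1}_{s,q},\xi^{x,1}_{t,r}\bigr)$. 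The bivariate CLT for the walk identifies $\lim_n c_n^{\fl{y\sqn}}=:\gamma(y)=P(G_s>q-y,\,G_t>r-y)-P(G_s>q-y)P(G_t>r-y)$, where $(G_t)_{t\ge0}$ is the centered Gaussian process with $\Cov(G_s,G_t)=\sigma_1^2(s\wedge t)$. The exponential moments of the walk again supply a uniform integrable Gaussian-tail envelope for $c_n^{\fl{y\sqn}}$, so dominated convergence gives $n^{-1/2}\sum_x c_n^x\to\int_\bR\gamma(y)\,dy$, while $\Var\bigl(n^{-1/2}\sum_x(\eta_x(0)-\emean)c_n^x\bigr)=O(n^{-1/2})\to0$ shows the deviation of $\eta_x(0)$ from $\emean$ in the weights is negligible; hence $\Sigma_n(\eta(0))\to\emean\Gamma_1$ in probability, a second Gaussian integral computation (reproducing \eqref{Ga1}, for which the form \eqref{Ga1b} is convenient) identifying $\int_\bR\gamma(y)\,dy$ with $\Gamma_1\bigl((s,q),(t,r)\bigr)$.

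\textbf{Assembling the pieces.} For frequency vectors $\alpha,\beta\in\bR^m$, conditioning on $\eta(0)$ gives
\begin{align*}
&E\exp\Bigl(i\sum_i\alpha_i A_n(t_i,r_i)+i\sum_i\beta_i C_n(t_i,r_i)\Bigr)\\
&\qquad=E\Bigl[e^{\,i\sum_i\beta_i C_n(t_i,r_i)}\,E\bigl[e^{\,i\sum_i\alpha_i A_n(t_i,r_i)}\mid\eta(0)\bigr]\Bigr].
\end{align*}
By the conditional CLT, the inner conditional characteristic function differs from $\exp\bigl(-\tfrac12\alpha^{\top}\Sigma_n(\eta(0))\alpha\bigr)$ by a quantity bounded by $2$ that tends to $0$ almost surely, hence in $L^1$; substituting it, and using $\Sigma_n(\eta(0))\to\emean\Gamma_1$ in probability together with the boundedness of $e^{\,i\sum_i\beta_i C_n(t_i,r_i)}$, the right-hand side converges to $\exp\bigl(-\tfrac12\emean\alpha^{\top}\Gamma_1\alpha\bigr)\cdot\lim_n E\,e^{\,i\sum_i\beta_i C_n(t_i,r_i)}=\exp\bigl(-\tfrac12\emean\alpha^{\top}\Gamma_1\alpha-\tfrac12\evar^2\beta^{\top}\Gamma_2\beta\bigr)$ by the previous step. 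Taking $\beta=\alpha$ yields $\exp\bigl(-\tfrac12\alpha^{\top}(\emean\Gamma_1+\evar^2\Gamma_2)\alpha\bigr)$, which by \eqref{Zcov} is the characteristic function of $\bigl(Z(t_i,r_i)\bigr)_i$; this proves the theorem. I expect the main obstacles to be the two Gaussian integral identifications of the limiting kernels with $\Gamma_1$ and $\Gamma_2$, and making the conditional CLT uniform enough in $\eta(0)$ to survive multiplication by the $\eta(0)$-dependent factor $e^{\,i\sum_i\beta_i C_n(t_i,r_i)}$; the rest --- the dominated-convergence and Riemann-sum passages, the $L^2$-smallness of the conditional-covariance fluctuation, and the Lindeberg estimates --- is routine once the finite range of $p$ is invoked.
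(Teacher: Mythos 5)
Your proof is correct, but it takes a genuinely different route from the paper's. The paper decomposes $\overline Y_n(\thvec)=\sum_m u(m)$ by the initial site $m$: the summand $u(m)$ pools all particles that start at $m$ together with the randomness of $\eta_m(0)$, and these summands are unconditionally independent across $m$. After truncating to $\lvert m\rvert\le a(n)\sqrt n$, the paper applies Lindeberg--Feller once; the split of the limiting covariance into $\emean\Gamma_1+\evar^2\Gamma_2$ then emerges at the level of the variance calculation via the random-sum identity \eqref{rwcovar}. You instead decompose by source of randomness, $\Ybar_n=A_n+C_n$ with $A_n$ the walk fluctuations around the conditional mean given $\eta(0)$ and $C_n$ the fluctuation of that conditional mean itself, and then apply a conditional CLT to $A_n$ and an unconditional weighted-CLT to $C_n$. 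Your decomposition has the structural advantage of mirroring the two independent pieces of $Z$ in \eqref{Zdef} term by term, and of avoiding the truncation/tail lemma (Lemma \ref{Slemma} of the paper) since with finite-range $p$ each $Y_n(t,r)$ is already a finite sum; the cost is that $A_n$ and $C_n$ are not unconditionally independent, so you must carry the conditioning through the characteristic function and show the conditional covariance $\Sigma_n(\eta(0))$ concentrates. You correctly flag the two real issues: the Gaussian integral identifications of $\int\gamma$ and $\int h_{s,q}h_{t,r}$ with $\Gamma_1$ and $\Gamma_2$ (both check out against \eqref{Ga1a}--\eqref{Ga2a} by the identity $P(A)P(B)-P(A\cap B)=\Cov(\mathbf 1_{A^c},\mathbf 1_B)$), and the uniformity of the conditional CLT. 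On the latter, your statement that the conditional CLT error ``tends to $0$ almost surely'' is slightly stronger than needed and slightly harder than you make it sound: a clean way is a quantitative (Esseen/Lindeberg) bound giving $\bigl\lvert E[e^{i\alpha\cdot A_n}\mid\eta(0)]-e^{-\frac12\alpha^\top\Sigma_n\alpha}\bigr\rvert\le C(\alpha)\,n^{-1/4}\operatorname{tr}\Sigma_n(\eta(0))$, which goes to $0$ in probability because $\operatorname{tr}\Sigma_n$ is bounded in $L^1$; combined with the uniform bound by $2$, this gives the needed $L^1$ convergence without requiring an a.s.\ statement.
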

 
 The  statement means that for any  space-time points 
$(t_1,r_1),\dotsc,(t_k,r_k)$, this weak convergence of
  $\bR^k$-valued random vectors holds:   
\[   (\Ybar_n(t_1,r_1),\dotsc, \Ybar_n(t_k,r_k)) 
\limd  (Z(t_1,r_1),\dotsc, Z(t_k,r_k)).  \]
 Under additional moment assumptions process level convergence in the space $D_2$
 can be proved (see \cite{kuma-08}).  
We state a corollary 
for  the special case of  the stationary occupation process $\eta(t)$.    Its
proof comes from simplifying  expression \eqref{Zcov} for the covariance. 

 \begin{corollary}
Suppose the process is stationary 
so that  $\{\eta_x(t):x\in\mathbb Z\}$ are i.i.d.\ Poisson with mean $\emean$ for each fixed $t$.
Then  at $r=0$ the limit  process 
$Z$ has  covariance
 \[\mE Z(s,0)Z(t,0)=\frac{\emean\s_1}{\sqrt{2\pi}}\bigl(\sqrt{s}+\sqrt{t}-\sqrt{|t-s|}\bigr).\]
In other words, process $Z(\cdot,0)$ is fractional Brownian motion with Hurst parameter $1/4$.
\label{fBMcor}\end{corollary}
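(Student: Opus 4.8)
\emph{Proof strategy.} The plan is to specialize the general covariance formula \eqref{Zcov} to the line $q=r=0$ under the stationary Poisson initial law, and then to recognize the resulting covariance.

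First I would use that for i.i.d.\ Poisson occupations the variance equals the mean, so $\evar^2=\emean$ in \eqref{eta1}, and \eqref{Zcov} collapses to
\[ \mE[Z(s,0)Z(t,0)]=\emean\bigl(\Gamma_1((s,0),(t,0))+\Gamma_2((s,0),(t,0))\bigr). \]
Thus everything reduces to evaluating $\Psi_{\nu^2}$ at $x=0$. From \eqref{Psi} the term $x(1-\Phi_{\nu^2}(x))$ vanishes at $x=0$ while $\varphi_{\nu^2}(0)=(2\pi\nu^2)^{-1/2}$ by \eqref{gauss}, so $\Psi_{\nu^2}(0)=\nu^2(2\pi\nu^2)^{-1/2}=\nu/\sqrt{2\pi}$ with the nonnegative root $\nu=\sqrt{\nu^2}$.

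Next I would substitute $\nu^2=\sigma_1^2(t+s)$, $\sigma_1^2\abs{t-s}$, $\sigma_1^2 s$, $\sigma_1^2 t$ into \eqref{Ga1}--\eqref{Ga2}. The two $\sqrt{t+s}$ contributions of $\Gamma_1$ and $\Gamma_2$ cancel, leaving
\[ \Gamma_1((s,0),(t,0))+\Gamma_2((s,0),(t,0))=\frac{\sigma_1}{\sqrt{2\pi}}\bigl(\sqrt{s}+\sqrt{t}-\sqrt{\abs{t-s}}\bigr), \]
which multiplied by $\emean$ is exactly the claimed covariance; in particular $\mE Z(t,0)^2=\tfrac{2\emean\sigma_1}{\sqrt{2\pi}}\sqrt{t}$.

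Finally I would compare with standard fractional Brownian motion $B^H$, whose covariance is $\tfrac12(\abs{s}^{2H}+\abs{t}^{2H}-\abs{t-s}^{2H})$; for $H=\tfrac14$ this equals $\tfrac12(\sqrt{s}+\sqrt{t}-\sqrt{\abs{t-s}})$ when $s,t\ge 0$. Hence $Z(\cdot,0)$ is $\sqrt{2\emean\sigma_1/\sqrt{2\pi}}$ times a standard fBm of Hurst parameter $1/4$, and since a centered Gaussian process is determined by its covariance, the identification is complete. There is no genuinely hard step here: the computation is exact. The only points requiring a little care are taking the nonnegative branch in $\Psi_{\nu^2}(0)=\nu/\sqrt{2\pi}$, and reading ``Hurst parameter $1/4$'' up to the overall variance normalization, since the covariance carries the prefactor $\emean\sigma_1/\sqrt{2\pi}$ rather than $\tfrac12$.
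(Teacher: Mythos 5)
Your proof is correct and follows exactly the route the paper indicates: since $\evar^2=\emean$ for Poisson occupations, substitute into \eqref{Zcov}, evaluate $\Gamma_1$ and $\Gamma_2$ at $q=r=0$ via $\Psi_{\nu^2}(0)=\nu/\sqrt{2\pi}$, observe the $\sqrt{t+s}$ cancellation, and compare with the fBm covariance. The remark about the constant prefactor (identification up to a multiplicative normalization) is a sensible clarification of what "is fractional Brownian motion with Hurst parameter $1/4$" means here.
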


\subsection{Sketch  of proof} \label{rwpfsec}

We turn to discuss   the proof of Theorem \ref{main_thm}. 
Independent walks allow us to compute everything in a straightforward manner.  
  Fix some  $N\in\bN$,   time points  
$0<t_1<t_2<\cdots<t_N \in \bR_{+}$,  
  space points  $r_1,r_2,\dotsc,r_N \in \bR$ and an 
  $N$-vector $\thvec=(\theta_1,\dotsc, \theta_N)\in\bR^N$.  Form the linear combinations 
 \[ \overline Y_n(\thvec)= \sum_{i=1}^N \theta_i \overline Y_n(t_i,r_i)
\quad\text{and}\quad  Z(\thvec)= \sum_{i=1}^N \theta_i  Z(t_i,r_i). \]
 
 The goal is now to prove 

\begin{proposition}
 \be   E \bigl[ \exp \bigl\{ i \overline Y_n(\thvec) \bigr\}\bigr]
 \to \mE \bigl[\exp\bigl\{i Z(\thvec)\bigr\} \bigr]. 
 \label{fddlim}\ee
\label{fddprop}\end{proposition}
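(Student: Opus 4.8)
\noindent\emph{Proof plan.} The plan is to prove \eqref{fddlim} by the Cram\'er--Wold device: with $N$, $(t_i)$, $(r_i)$, $\thvec$ fixed it is enough to treat the single real variable $\Ybar_n(\thvec)$, and since $Z(\thvec)$ is centered Gaussian the target is $E[\exp\{i\Ybar_n(\thvec)\}]\to\exp\bigl(-\tfrac12\Var Z(\thvec)\bigr)$ with $\Var Z(\thvec)=\sum_{i,j}\theta_i\theta_j\bigl(\emean\Gamma_1((t_i,r_i),(t_j,r_j))+\evar^2\Gamma_2((t_i,r_i),(t_j,r_j))\bigr)$ by \eqref{Zcov}. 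The decisive step is to condition on the initial configuration $\eta(0)$: each walk $X^{x,k}_\centerdot$ enters $\Ybar_n(\thvec)$ only through the single $\thvec$-weighted combination $\sum_i\theta_i\xi^{x,k}_{n,i}$ of the signed indicators of \eqref{Y1}--\eqref{Y2} read at the times $\fl{nt_1},\dots,\fl{nt_N}$, and, given $\eta(0)$, these combinations are independent over $(x,k)$. I would split $\Ybar_n(\thvec)=A_n+C_n$ with
\[
C_n=n^{-1/4}\sum_{x\in\bZ}\bigl(\eta_x(0)-\emean\bigr)\sum_{i=1}^N\theta_i\,g^x_{n,i},\qquad g^x_{n,i}:=E\bigl[\xi^{x,1}_{n,i}\bigr],
\]
so that $E[Y_n(t_i,r_i)\mid\eta(0)]=\sum_x\eta_x(0)g^x_{n,i}$, while $A_n=n^{-1/4}\sum_x\sum_{k=1}^{\eta_x(0)}\sum_i\theta_i\bigl(\xi^{x,k}_{n,i}-g^x_{n,i}\bigr)$ records the deviations of the walks from their conditional means. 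Thus $C_n$ is $\sigma(\eta(0))$-measurable and assembled from the centered i.i.d.\ variables $\eta_x(0)-\emean$, while, conditionally on $\eta(0)$, $A_n$ is a sum of independent mean-zero terms each bounded by $n^{-1/4}\|\thvec\|_1$. Comparing with \eqref{Zdef}, $C_n$ should converge to the $\evar B(\cdot)$-contribution (carrying the $\Gamma_2$ part of the covariance) and $A_n$ to the space-time white-noise contribution (the $\Gamma_1$ part).

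For the initial-condition piece $C_n$: it is a sum of independent mean-zero terms with $\Var(C_n)=n^{-1/2}\evar^2\sum_x\bigl(\sum_i\theta_i g^x_{n,i}\bigr)^2$. The ordinary CLT for one walk — uniform in the cutoff by Berry--Esseen, and applicable since \eqref{pass1} makes all moments finite — gives $g^{\fl{y\sqrt n}}_{n,i}\to\bar g_i(y):=\ind\{y\le 0\}\bigl(1-\Phi_{\sigma_1^2 t_i}(r_i-y)\bigr)-\ind\{y>0\}\Phi_{\sigma_1^2 t_i}(r_i-y)$ uniformly in $y$, while Gaussian and large-deviation tails make the summand negligible for $|x|$ far beyond $\sqrt n$; hence $\Var(C_n)\to\evar^2\int_\bR\bigl(\sum_i\theta_i\bar g_i(y)\bigr)^2\,dy$, and this equals $\evar^2\sum_{i,j}\theta_i\theta_j\Gamma_2((t_i,r_i),(t_j,r_j))$ by the elementary identities for $\Psi_{\nu^2}$ and the integration-by-parts identity recorded in the Remark above. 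The Lindeberg condition for $C_n$ holds trivially because $\eta_x(0)$ has finite variance and the coefficients are $O(n^{-1/4})$, so $E[\exp\{iC_n\}]\to\exp\bigl(-\tfrac12\evar^2\sum_{i,j}\theta_i\theta_j\Gamma_2\bigr)$.

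For the dynamical piece $A_n$: conditionally on $\eta(0)$ it is a triangular array of independent mean-zero variables that are uniformly $O(n^{-1/4})$-bounded, so the conditional Lindeberg--Feller theorem applies (its Lindeberg term vanishing outright once $n^{-1/4}\|\thvec\|_1$ drops below the truncation level) and gives $E[\exp\{iA_n\}\mid\eta(0)]\to\exp\bigl(-\tfrac12 v_n\bigr)$ provided the conditional variance $v_n=n^{-1/2}\sum_x\eta_x(0)\,\Var\bigl(\sum_i\theta_i\xi^{x,1}_{n,i}\bigr)$ converges. A joint CLT for one walk at the times $\fl{nt_i}$ — using that for $s<t$ the increment $X^x_{\fl{nt}}-X^x_{\fl{ns}}$ is independent of $X^x_{\fl{ns}}$ — identifies the limit of $\Var\bigl(\sum_i\theta_i\xi^{\fl{y\sqrt n},1}_{n,i}\bigr)$, and combining that deterministic Riemann-sum limit with an $L^2$ law of large numbers for the i.i.d.\ array $\{\eta_x(0)\}$ yields $v_n\to\emean\sum_{i,j}\theta_i\theta_j\Gamma_1((t_i,r_i),(t_j,r_j))=:v_\infty$ in probability; the closed forms \eqref{Ga1} and \eqref{Ga1b} come out of the same $\Psi$-identities. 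Since $v_\infty$ is deterministic and $C_n$ is $\sigma(\eta(0))$-measurable, I would then write $E[\exp\{i\Ybar_n(\thvec)\}]=E\bigl[\exp\{iC_n\}\,E[\exp\{iA_n\}\mid\eta(0)]\bigr]$; because $E[\exp\{iA_n\}\mid\eta(0)]\to e^{-v_\infty/2}$ in probability and is bounded by $1$, this equals $e^{-v_\infty/2}E[\exp\{iC_n\}]+o(1)$, which by the previous paragraph tends to $\exp\bigl(-\tfrac12 v_\infty-\tfrac12\evar^2\sum_{i,j}\theta_i\theta_j\Gamma_2\bigr)=\exp\bigl(-\tfrac12\Var Z(\thvec)\bigr)=\mE[\exp\{iZ(\thvec)\}]$, which is \eqref{fddlim}.

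The skeleton above — conditioning on $\eta(0)$, the $A_n/C_n$ split, two applications of the Lindeberg CLT, and the recombination — is routine; I expect the main obstacle to be the identification of the limiting variances. That requires, first, a joint central limit theorem for one random walk at the several scaled times, uniform enough in the starting point that the sums $n^{-1/2}\sum_x(\cdots)$ converge to the stated integrals and the lattice tails are controlled — here \eqref{pass1} together with Berry--Esseen and exponential tail bounds for the walk do the job — and, second, the bookkeeping that matches those integrals with the closed forms \eqref{Ga1}, \eqref{Ga1b} and \eqref{Ga2} through the elementary identities for $\Psi_{\nu^2}$ listed in the Remark above. A secondary point needing care is that the number of walks contributing non-negligibly to $A_n$ is of order $\sqrt n$, so the convergence $v_n\to v_\infty$ should come from a triangular-array (for instance $L^2$) law of large numbers for $\{\eta_x(0)\}$ rather than from the ordinary strong law.
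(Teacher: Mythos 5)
Your proof is correct but takes a genuinely different route from the paper's. The paper groups all contributions by starting site $m$ and writes $\Ybar_n(\thvec)=\sum_m u(m)$ with the $u(m)$ of \eqref{umbar} unconditionally independent, truncates to $|m|\le a(n)\sqrt n$ to control the Lindeberg term (Lemma \ref{Slemma}), and applies a single Lindeberg--Feller argument; the second moment of each $u(m)$ is computed via the random-sum covariance identity \eqref{rwcovar}, which produces the $\emean\Gamma_1$ and $\evar^2\Gamma_2$ contributions together in one pass. You instead slice $\Ybar_n(\thvec)$ orthogonally into the $\sigma(\eta(0))$-measurable conditional-mean piece $C_n$ (initial-condition noise, yielding the $\Gamma_2$ part) and the conditional-deviation piece $A_n$ (dynamical noise, yielding the $\Gamma_1$ part), prove a CLT for each, and recombine through the factorization $E\bigl[e^{i\Ybar_n(\thvec)}\bigr]=E\bigl[e^{iC_n}\,E[e^{iA_n}\mid\eta(0)]\bigr]$. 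This makes the independence of the two noise sources in \eqref{Zdef} structurally visible from the outset and dispenses with the truncation step: the Lindeberg term for $C_n$ vanishes because the deterministic coefficients are $O(n^{-1/4})$ and $\Var\,\eta_0(0)<\infty$, while for $A_n$ it vanishes outright once $n$ is large since the conditional summands are uniformly $O(n^{-1/4})$-bounded. The cost is a slightly more delicate conditional CLT: the conditional variance $v_n$ is a random variable converging only in probability, so extracting $E[e^{iA_n}\mid\eta(0)]\to e^{-v_\infty/2}$ in probability needs either a subsequence argument (for each subsequence pass to a further subsequence along which $v_n\to v_\infty$ a.s., then apply Lindeberg--Feller) or a quantitative characteristic-function estimate for sums of uniformly bounded independent variables, together with the $L^2$ law of large numbers replacing $\eta_x(0)$ by $\emean$ inside $v_n$ --- a step you correctly flag, and one that is handled implicitly by the unconditional independence of the $u(m)$ in the paper's version. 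Both routes identify the limiting covariance by the same one-walk CLT/Riemann-sum computation matching \eqref{Ga1a}--\eqref{Ga2a} with \eqref{Ga1}--\eqref{Ga2}.
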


  Since the random walks and 
initial occupation variables are independent,  we can  write $ \overline Y_n(\thvec)$  as a sum of
independent random variables and  take advantage of standard central limit
theorems  from the literature.   
  \be \overline Y_n(\thvec) =
 n^{-\frac{1}{4}}\sum_{i=1}^N\theta_i\big{\{}Y_{n}(t_i,r_i)-E Y_{n}(t_i,r_i)\big{\}}=
 W_n=\sum_{m=-\infty}^{\infty} u(m)  \label{sum1}\ee
with 
\begin{equation}
\label{umbar}
u(m)=\, \sum_{i=1}^N \theta_i \Bigl( U_m(t_i,r_i) \,\ind\{m \le 0\}  
- V_m(t_i,r_i) \, \ind\{m > 0\} \Bigr), 
\end{equation}
and 
\begin{align}
&\begin{split} U_m(t,r)&=n^{-\frac{1}{4}}\sum_{j=1}^{\eta_m(0)} \mathbf{1}\lbrace X^{m,j}_{nt}
> \fl{ntv}+r\sqn\rbrace \\
&\qquad\qquad \qquad\qquad-n^{-\frac{1}{4}} \emean P(X^m_{nt}> \fl{ntv}+r\sqn\,),\end{split} \label{Udef}\\
&\begin{split}V_m(t,r)&=n^{-\frac{1}{4}}\sum_{j=1}^{\eta_m(0)} \mathbf{1}\lbrace X^{m,j}_{nt}
\le  \fl{ntv}+r\sqn\rbrace \\
&\qquad\qquad \qquad\qquad  -n^{-\frac{1}{4}} \emean  P(X^m_{nt}\le  \fl{ntv}+r\sqn\,). 
\end{split}\nn\end{align}
The variables $\{u(m) \}_{m\in\bZ}$ are independent  
because initial occupation variables and walks are independent.   
  
Let $a(n)\nearrow\infty$ be a sequence that will be determined precisely in the proof.  
As the first step we observe 
 that the terms $\abs m> a(n)\sqn$ can be discarded from \eqref{sum1}.   
Define  
\be   W_n^*=\sum_{\abs m\le a(n)\sqn}  u(m) . \label{sum2}\ee
The lemma below is proved by calculating moments of the random walks.  
  
\begin{lemma}   $E\abs{W_n-W_n^*}^2\to 0$ as $n\to\infty$.  \label{Slemma}\end{lemma}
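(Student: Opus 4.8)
The plan is to show that the discarded tail $W_n - W_n^* = \sum_{|m| > a(n)\sqn} u(m)$ has second moment tending to $0$ by a direct computation exploiting independence of the $u(m)$'s. Since $E[u(m)] = 0$ and the $u(m)$ are independent across $m$, we have $E|W_n - W_n^*|^2 = \sum_{|m|>a(n)\sqn} E[u(m)^2]$, so it suffices to bound each $E[u(m)^2]$ and sum the tail. By the triangle inequality in $L^2$ it is enough to treat a single term $\theta_i$, i.e.\ bound $E[U_m(t,r)^2]\ind\{m\le 0\}$ and $E[V_m(t,r)^2]\ind\{m>0\}$ for a fixed $(t,r)$. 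Each of these is a centered sum over the $\eta_m(0)$ walks started at $m$; conditioning on $\eta_m(0)$ and using that the walks $X^{m,j}$ are i.i.d.\ in $j$ with mean $\emean$ for the occupation variable, a standard variance-of-a-random-sum identity gives, for instance,
\[
E[U_m(t,r)^2] = n^{-1/2}\Bigl( \evar^2\, P(X^m_{nt} > \fl{ntv} + r\sqn)^2 + \emean\, P(X^m_{nt} > \fl{ntv} + r\sqn)\bigl(1 - P(X^m_{nt} > \fl{ntv}+r\sqn)\bigr)\Bigr),
\]
so $E[U_m(t,r)^2] \le C n^{-1/2}\, P\bigl(X^m_{nt} > \fl{ntv} + r\sqn\bigr)$ with $C$ depending only on $\emean,\evar^2$. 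The analogous bound holds for $V_m$ with the complementary event. The key probabilistic input is then a tail estimate: for $m \le 0$ and $|m|$ large compared to $\sqn$, the walk $X^m_{nt}$, which has mean $m + \fl{nt}v \approx m + ntv$ and standard deviation $\sigma_1\sqrt{nt}$, must travel past $\fl{ntv} + r\sqn$, i.e.\ it must make an excess displacement of order $|m|$. For the finite-range walk this is a large-deviation / Bernstein-type bound; even the crude Chebyshev bound $P(X^m_{nt} > \fl{ntv}+r\sqn) \le C n t / (|m| - r\sqn)^2$ for $|m| \ge 2r\sqn$ suffices once summed. Summing over $|m| > a(n)\sqn$ gives
\[
E|W_n - W_n^*|^2 \le C n^{-1/2} \sum_{|m| > a(n)\sqn} \frac{nt}{m^2} \le C' n^{-1/2}\cdot \frac{nt}{a(n)\sqn} = \frac{C' t}{a(n)},
\]
which $\to 0$ since $a(n)\nearrow\infty$. (If a Chebyshev bound is too weak because of the summation over the $N$ time points with their various speeds $t_i v$, one uses instead the sub-Gaussian/exponential tail $P(X^m_{nt} > \fl{ntv} + r\sqn) \le \exp(-c\,(|m| - |r|\sqn)^2 / (nt))$ valid for finite-range walks, which makes the tail sum negligible and in fact allows $a(n)$ to grow as slowly as one likes.)

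The one subtlety, and the step I expect to require the most care, is the interaction between the scaling windows and the moving reference point $\fl{ntv}$: for each $i$ the relevant event involves the displacement of the walk relative to $\fl{nt_iv} + r_i\sqn$, and the constants in the tail bound must be uniform over the finitely many $(t_i, r_i)$ — this is routine since $N$ is fixed, but one must be careful that the "drift correction" $m$ genuinely dominates once $|m| > a(n)\sqn$ with $a(n)\to\infty$, i.e.\ that $|m|$ eventually exceeds $|r_i|\sqn$ for all $i$. A secondary point is that one should separately check the contribution of the centering terms $\emean P(\cdots)$ is handled automatically because we are computing the variance of an already-centered quantity, so no extra estimate is needed there. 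Finally, one records the precise constraint this forces on the auxiliary sequence, namely any $a(n)\nearrow\infty$ works (with room to spare if the exponential tail bound is used), which is what is invoked later in the proof when $a(n)$ is "determined precisely."
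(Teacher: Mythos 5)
Your proof is correct and follows exactly the route the paper gestures at when it says the lemma ``is proved by calculating moments of the random walks'': independence of the $u(m)$'s reduces the claim to $\sum_{|m|>a(n)\sqn}E[u(m)^2]$, the random-sum variance formula gives $E[U_m^2]=n^{-1/2}(\emean p(1-p)+\evar^2 p^2)\le Cn^{-1/2}p$, and the Chebyshev tail bound $p\le Cnt/m^2$ for $|m|\gg\sqn$ makes the tail sum $O(a(n)^{-1})\to 0$. The two subtleties you flag (uniformity over the finitely many $(t_i,r_i)$, and that $|m|>a(n)\sqn$ eventually dominates $|r_i|\sqn$) are precisely the only points needing care, and your treatment of them is adequate.
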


The limit $Z(\thvec)$ in our goal \eqref{fddlim}  has $\cN(0, \sigma(\thvec)^2)$
distribution with  variance 
\be 
\sigma(\thvec)^2=\sum_{1\le i,j\le N}  \theta_i\theta_j \Bigl[  
\emean\Gamma_1\big((t_i,r_i),(t_j,r_j)\big)+\evar^2\Gamma_2\big((t_i,r_i),(t_j,r_j)\big) \Bigr]. 
\label{sitheta}\ee 
The two $\Gamma$-terms, defined earlier in \eqref{Ga1} and \eqref{Ga2}, 
 have the following expressions in terms of a standard 1-dimensional
Brownian motion $B_t$:
 \be \begin{aligned}
&\Gamma_1\bigl((s,q),(t,r)\bigr)  
=  \int_{-\infty}^{\infty}\Bigl(  \mP[B_{\sigma_1^2s}\le q-x]\mP[ B_{\sigma_1^2t}> r-x] \\
&\qquad\qquad\qquad- \; \mP[B_{\sigma_1^2s}\le  q-x, B_{\sigma_1^2t}> r-x] \Bigr)\,dx 
\end{aligned}\label{Ga1a}\ee
and 
 \be \begin{aligned}
&\Gamma_2\bigl((s,q),(t,r)\bigr)  
=
 \int_{-\infty}^{0} \mP[B_{\sigma_1^2s}>  q-x]\mP[ B_{\sigma_1^2t}> r-x]\,dx \\
 &\qquad\qquad +\;\int_{0}^\infty \mP[B_{\sigma_1^2s}\le  q-x]\mP[ B_{\sigma_1^2t}\le r-x]\,dx. 
\end{aligned}\label{Ga2a}\ee

\begin{remark}
Turning formulas \eqref{Ga1a}--\eqref{Ga2a} into \eqref{Ga1}--\eqref{Ga2} involves calculus
and  these properties of Gaussians:  
 $(d/dx)\Psi_{\nu^2}(x)=-\Phi_{\nu^2}(-x)$,   $\Phi_{\nu^2}(x)=1-\Phi_{\nu^2}(-x)$ 
and  
\[  \int_{-\infty}^\infty \Phi_{\alpha^2}(x)\Phi_{\nu^2}(r-x)\,dx =
\int_{-\infty}^r \Phi_{\alpha^2+\nu^2}(x) \,dx. \] 
 \end{remark} 
 
By Lemma \ref{Slemma},  the desired limit \eqref{fddlim} follows from showing  
\be   E(e^{iW_n^*}) 
 \to   e^{-\sigma(\thvec)^2/2}.  \label{goal2}\ee 
 This  will be achieved by the Lindeberg-Feller theorem.
 
 \begin{theorem}[Lindeberg-Feller]  For each $n$, suppose 
 $\{ X_{n,j}:  1\le j\le J(n)\}$ are independent, mean-zero, square-integrable random variables
 and let $S_n=X_{n,1}+\dotsm+X_{n,J(n)}$.
 Assume that 
\[  
\lim_{n\to\infty} \sum_{j=1}^{J(n)}  E( X_{n,j}^2) = \sigma^2\]
and for each $\e>0$,   
\[ \lim_{n\to\infty} \sum_{j=1}^{J(n)}   E \big( X_{n,j}^2 
\mathbf{1}\lbrace\vert  X_{n,j} \vert \ge \e  \rbrace \big) = 0.  \]
Then as $n\to\infty$,   $S_n$ converges in distribution to a $\cN(0,\sigma^2)$-distributed Gaussian
random variable.  In terms of probabilities, the conclusion is that for all $s\in\bR$, 
\[
\lim_{n\to\infty}  P\{S_n\le s\} = \frac1{\sqrt{2\pi\sigma^2}}  \int_{-\infty}^s e^{-x^2/2\sigma^2}\,dx.\]
In terms of characteristic functions, the conclusion is that  for all $t\in\bR$, 
\[  \lim_{n\to\infty}  E(e^{itS_n})  = e^{-\sigma^2t^2/2}.  \]
 \end{theorem}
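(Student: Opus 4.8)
The plan is to prove the theorem by the method of characteristic functions: by L\'evy's continuity theorem it suffices to show $\lim_{n\to\infty} E(e^{itS_n}) = e^{-\sigma^2 t^2/2}$ for each fixed $t\in\bR$, since pointwise convergence of characteristic functions to the (continuous) Gaussian one yields all three asserted equivalent forms of the conclusion, i.e.\ $S_n$ converging in distribution to a $\cN(0,\sigma^2)$ variable. Write $\phi_{n,j}(t)=E(e^{itX_{n,j}})$ and $\sigma_{n,j}^2 = E(X_{n,j}^2)$. By independence $E(e^{itS_n}) = \prod_{j=1}^{J(n)} \phi_{n,j}(t)$, so the target is to show this product tends to $e^{-\sigma^2t^2/2}$.

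The first step is to record the uniform asymptotic negligibility of the summands: for any $\e>0$,
\[ \max_{1\le j\le J(n)} \sigma_{n,j}^2 \;\le\; \e^2 \;+\; \sum_{j=1}^{J(n)} E\bigl(X_{n,j}^2\,\ind\{|X_{n,j}|\ge\e\}\bigr), \]
so the Lindeberg hypothesis forces $\max_j \sigma_{n,j}^2 \to 0$, and together with $\sum_j \sigma_{n,j}^2 \to \sigma^2$ this keeps $\sum_j \sigma_{n,j}^2$ bounded in $n$. The second step is the analytic heart. Using the elementary bound $|e^{ix}-1-ix+\tfrac12 x^2|\le \min\{\tfrac16|x|^3,\,x^2\}$ for real $x$, applied with $x=tX_{n,j}$ and using $EX_{n,j}=0$, gives
\[ \bigl|\phi_{n,j}(t) - \bigl(1-\tfrac12 t^2\sigma_{n,j}^2\bigr)\bigr| \;\le\; E\min\bigl\{\tfrac16|t|^3|X_{n,j}|^3,\ t^2X_{n,j}^2\bigr\}. \]
Splitting the expectation over $\{|X_{n,j}|<\e\}$ and $\{|X_{n,j}|\ge\e\}$ and using the cubic branch on the first event and the quadratic branch on the second bounds the right side by $\tfrac16|t|^3\e\,\sigma_{n,j}^2 + t^2 E\bigl(X_{n,j}^2\,\ind\{|X_{n,j}|\ge\e\}\bigr)$; summing over $j$, using that $\sum_j\sigma_{n,j}^2$ is bounded, then letting $n\to\infty$ and finally $\e\downarrow 0$, yields $\sum_{j=1}^{J(n)} \bigl|\phi_{n,j}(t) - (1-\tfrac12 t^2\sigma_{n,j}^2)\bigr| \to 0$.

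It remains to pass from this to the product. Since $|\phi_{n,j}(t)|\le 1$ and, for $n$ large, $|1-\tfrac12 t^2\sigma_{n,j}^2|\le 1$ by the negligibility step, the telescoping inequality $\bigl|\prod_j a_j - \prod_j b_j\bigr|\le \sum_j|a_j-b_j|$ for complex numbers of modulus at most $1$ shows $\prod_j\phi_{n,j}(t)$ and $\prod_j(1-\tfrac12 t^2\sigma_{n,j}^2)$ have the same limit. Finally, with $z_{n,j}=\tfrac12 t^2\sigma_{n,j}^2$ one has $\max_j z_{n,j}\to 0$ and $\sum_j z_{n,j}\to \tfrac12 t^2\sigma^2$, so comparing $\prod_j(1-z_{n,j})$ with $\prod_j e^{-z_{n,j}}=e^{-\sum_j z_{n,j}}$ via the same telescoping inequality and $|1-z-e^{-z}|\le |z|^2$ for $|z|$ small gives an error at most $\sum_j z_{n,j}^2\le (\max_j z_{n,j})\bigl(\sum_j z_{n,j}\bigr)\to 0$. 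Hence $\prod_j(1-z_{n,j})\to e^{-t^2\sigma^2/2}$, so $E(e^{itS_n})\to e^{-t^2\sigma^2/2}$ for every $t$, and the continuity theorem finishes the proof. I expect the only real obstacle to be the bookkeeping in the second step: pairing the Taylor estimates correctly and splitting each expectation at level $\e$ so that the Lindeberg sum appears with a small prefactor, and then taking the limits in the right order ($n\to\infty$ before $\e\downarrow 0$); the negligibility step, the product telescoping, and the invocation of the continuity theorem are routine.
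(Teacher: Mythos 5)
Your proof is correct and complete; it is the classical characteristic-function proof of Lindeberg--Feller (Taylor estimate $|e^{ix}-1-ix+\tfrac12x^2|\le\min\{\tfrac16|x|^3,x^2\}$, uniform negligibility of the variances, then two rounds of the telescoping inequality for products of unit-disk complex numbers). The paper itself quotes this theorem as a standard tool from the literature and does not prove it, so there is nothing to compare against; the chain $\max_j\sigma_{n,j}^2\to0$, the $\e$-split giving the Lindeberg term, the order of limits ($n\to\infty$ then $\e\downarrow0$), and the passage from $\prod(1-z_{n,j})$ to $e^{-\sum z_{n,j}}$ are all handled correctly.
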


Now to prove \eqref{goal2}
 the task is to   verify the conditions of the Lindeberg-Feller theorem:  
\be 
\sum_{\abs m\le a(n)\sqn} E( u(m)^2) \to  \sigma(\thvec)^2 \label{LF-1}\ee
and 
\be  \sum_{\abs m\le a(n)\sqn}E \big(\, \vert u(m)\vert^2 
\mathbf{1}\lbrace\vert  u(m) \vert \ge \e  \rbrace \big) \to 0.    \label{LF-2}\ee
 
 We begin with the negligibility condition  \eqref{LF-2}.  This will determine $a(n)$. 
 
\begin{lemma}  
Under assumption \eqref{eta1}, 
\be  \lim_{n\to\infty}  \sum_{\abs m\le a(n)\sqn}E\bigl(\, \vert u(m)\vert^2 
\mathbf{1}\lbrace\vert  u(m) \vert \ge \e  \rbrace\bigr) =0.  \label{lf:tech0}\ee
\end{lemma}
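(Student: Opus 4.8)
The plan is a routine second-moment truncation, with the twist that this estimate is precisely what pins down the sequence $a(n)$. The mechanism is that, via \eqref{umbar}--\eqref{Udef}, each $u(m)$ is assembled out of the $\eta_m(0)$ walks launched from site $m$, so $|u(m)|$ can reach the fixed threshold $\e$ only if $\eta_m(0)$ is atypically large — of order $n^{1/4}$ — and the finite second moment of $\eta_m(0)$ makes such configurations rare enough to kill the sum. First I would record the crude deterministic bound: from \eqref{Udef}, $|U_m(t,r)|\le n^{-1/4}(\eta_m(0)+\emean)$ since $\sum_{j=1}^{\eta_m(0)}\mathbf{1}\{\cdots\}\le\eta_m(0)$ and the subtracted mean is at most $n^{-1/4}\emean$; the same bound holds for $|V_m(t,r)|$. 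For each fixed $m$ only the $U$-terms (when $m\le0$) or only the $V$-terms (when $m>0$) occur in \eqref{umbar}, so with $\bar\theta:=\sum_{i=1}^N|\theta_i|$ one gets
\[
  |u(m)|\ \le\ \bar\theta\, n^{-1/4}\bigl(\eta_m(0)+\emean\bigr)\qquad\text{for every }m .
\]
Hence, once $n$ is large enough that $\bar\theta\, n^{-1/4}\emean\le\e/2$, the event $\{|u(m)|\ge\e\}$ is contained in $\{\eta_m(0)\ge c_\e\, n^{1/4}\}$ with $c_\e=\e/(2\bar\theta)$.

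Next, using that the $\{\eta_m(0)\}$ are i.i.d., the above gives for all such $n$ and every $m$
\[
  E\bigl(|u(m)|^2\,\mathbf{1}\{|u(m)|\ge\e\}\bigr)\ \le\ \bar\theta^{\,2}\,n^{-1/2}\,
  E\bigl[(\eta_0(0)+\emean)^2\,\mathbf{1}\{\eta_0(0)\ge c_\e n^{1/4}\}\bigr].
\]
Summing over the at most $3a(n)\sqn$ indices with $|m|\le a(n)\sqn$, the $n^{-1/2}$ cancels the $\sqn$ and leaves
\[
  \sum_{|m|\le a(n)\sqn} E\bigl(|u(m)|^2\,\mathbf{1}\{|u(m)|\ge\e\}\bigr)\ \le\ 3\bar\theta^{\,2}\,a(n)\,G\!\bigl(c_\e n^{1/4}\bigr),
\]
where $G(L):=E[(\eta_0(0)+\emean)^2\,\mathbf{1}\{\eta_0(0)\ge L\}]$. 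By \eqref{eta1} we have $E[(\eta_0(0)+\emean)^2]<\infty$, so dominated convergence gives $G(L)\to0$ as $L\to\infty$, with $G$ nonincreasing.

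The final step fixes $a(n)$. I would take $a(n)$ to be the largest integer $k\ge1$ with $k^3 G(n^{1/4}/k)\le1$; this is well defined for large $n$ (the choice $k=1$ works since $G(n^{1/4})\to0$) and $a(n)\to\infty$ (for fixed $k_0$ one has $k_0^3 G(n^{1/4}/k_0)\to0$). For this $a(n)$, $a(n)\,G(n^{1/4}/a(n))\le a(n)^{-2}\to0$, and for each fixed $\e$ one has $a(n)\ge1/c_\e$ eventually, whence $G(c_\e n^{1/4})\le G(n^{1/4}/a(n))$ and the displayed sum is $\le 3\bar\theta^{\,2}/a(n)\to0$. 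This proves \eqref{lf:tech0}. The same $a(n)$ is retained for the rest of the proof; since it grows slowly it is also compatible with the (random walk moment) bound behind Lemma \ref{Slemma}.

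\textbf{Main obstacle.} There is no genuinely hard step — this is a standard truncation estimate controlled by the finite variance of $\eta_0(0)$. The only delicate point is that $G(L)$ may decay arbitrarily slowly, so $a(n)$ cannot be taken as a fixed power of $n$; it must be calibrated to $G$ via a diagonal choice, and that single sequence must work simultaneously for every $\e>0$ here and also serve in Lemma \ref{Slemma}.
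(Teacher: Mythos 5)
Your proof is correct and follows essentially the same route as the paper: bound $|u(m)|\le \bar\theta\, n^{-1/4}(\eta_m(0)+\emean)$, observe that $\{|u(m)|\ge\e\}$ forces $\eta_m(0)\gtrsim n^{1/4}$, sum the resulting single-site expectation over $O(a(n)\sqrt n)$ sites, and then pick $a(n)\to\infty$ slowly enough relative to the decay of $G(L)=E[(\eta_0(0)+\emean)^2\ind\{\eta_0(0)\ge L\}]$. The paper's explicit choice is $a(n)=G(n^{1/8})^{-1/2}$; your diagonal choice $a(n)=\max\{k\ge1:k^3G(n^{1/4}/k)\le1\}$ is an equally valid alternative and yields the same conclusion.
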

\begin{proof}  
   Since 
\[\vert u(m)\vert \le C{n^{-{1}/{4}}}\bigl( \eta_m(0) + \emean\bigr)  \] 
so for a different $\e_1>0$ and by shift-invariance, 
\begin{align*}
&\sum_{\abs m\le a(n)\sqn}E\big[ \vert u(m)\vert^2 
\mathbf{1}\lbrace\vert  u(m) \vert \ge \e  \rbrace \big]
\le  C a(n) E\bigl[(\eta_0(0) +\emean )^2 \ind\{ \eta_0(0)\ge n^{1/4}\e_1\}\bigr].
\end{align*}
By the moment assumption \eqref{eta1} this last expression $\to 0$ for every $\e_1>0$ 
if $a(n)\nearrow\infty$ slowly enough, for example 
\[  a(n)= \Bigl( E\bigl[(\eta_0(0) +\emean )^2 \ind\{ \eta_0(0)\ge n^{1/8}\}\bigr] \Bigr)^{-1/2}  
\qedhere \]
\end{proof} 

We turn to checking \eqref{LF-1}.  
\be \begin{aligned} 
& \sum_{\abs m\le a(n)\sqn} E\big[ u(m)^2 \big]\\
&=
\sum_{1 \le i, j \le N}   \theta_i\theta_j   \sum_{\abs m\le a(n)\sqn}
\Bigl[  \ind_{\{m\le 0\,\}}E\bigl(U_m(t_i,r_i)U_m(t_j,r_j)\bigr)\\[3pt]
  &\qquad\qquad \qquad\qquad+\;   
 \ind_{\{m> 0\,\}}E\bigl(V_m(t_i,r_i)V_m(t_j,r_j)\bigr)\Bigr].
\end{aligned}\label{vpfindim1}\ee
To the expectations   we  apply this formula for the covariance of two random sums:  with 
$\{Z_i\}$ i.i.d.\ and independent of a random $K\in\bZ_+$,  
\be\begin{aligned} 
  &\Cov\Bigl(\;\sum_{i=1}^K f(Z_i)\,, \sum_{j=1}^K g(Z_j)\Bigr) \\
&\quad = 
EK\, \Cov(f(Z),g(Z))+ \Var(K)\,Ef(Z)\, Eg(Z).
\end{aligned}\label{rwcovar}\ee
For  the first expectation on the right in \eqref{vpfindim1}:
\begin{align*} 
&E\bigl(U_m(s,q)U_m(t,r)\bigr)\\
&=
n^{-1/2} \Cov\biggl(\, \sum_{j=1}^{\eta_m(0)} \mathbf{1}\lbrace X^{m,j}_{ns} > \fl{nsv}+q\sqn\,\rbrace\,,  \\
&\qquad  \qquad\qquad \qquad \qquad \qquad 
\sum_{j=1}^{\eta_m(0)} \mathbf{1}\lbrace X^{m,j}_{nt} > \fl{ntv}+r\sqn\,\rbrace 
\biggr) \\
&=n^{-1/2}\emean \Bigl[ P(X^m_{ns}> \fl{nsv}+q\sqn, \, X^m_{nt}>  \fl{ntv}+r\sqn\,)\\[4pt]
&\qquad  \qquad\qquad
 -\; P(X^m_{ns}> \fl{nsv}+q\sqn\,) P(X^m_{nt}> \fl{ntv}+r\sqn\,)\Bigr]\\[5pt]
&\quad 
+ n^{-1/2}\evar^2  P(X^m_{ns}> \fl{nsv}+q\sqn\,)P(X^m_{nt}> \fl{ntv}+r\sqn\,).
 \end{align*}
 Do the same for the $V$-terms. 
 After some rearranging of the  probabilities, we arrive at 
  \begin{align}
& \sum_{\abs m\le a(n)\sqn} E\big[ u(m)^2 \big]  \label{fd6}\\
 &= n^{-1/2}\sum_{1\le i,j\le N}\theta_i\theta_j \,\biggl[
\emean \sum_{\abs m\le a(n)\sqn}   \Bigl\{
P(X^{m}_{nt_i}\le \fl{nt_iv}+r_i\sqrt{n}\,)\\[4pt]
&\qquad\qquad \qquad\qquad\qquad \qquad\qquad \qquad 
\times P( X^{m}_{nt_j}> \fl{nt_jv}+r_j\sqrt{n} \,)
\nn\\[4pt]
& \qquad \qquad\qquad \qquad
 -\; P(X^{m}_{nt_i}\le \fl{nt_iv}+r_i\sqrt{n},\, X^{m}_{nt_j}>  \fl{nt_jv}+r_j\sqrt{n} \,)\Bigr\} 
\nn \\[4pt]
&+\evar^2\sum_{ -a(n)\sqn\le m\le 0}  
 P(X^{m}_{nt_i}> \fl{nt_iv}+r_i\sqrt{n}\,)P( X^{m}_{nt_j}> \fl{nt_jv}+r_j\sqrt{n} \,)\nn \\[4pt]
&+ \evar^2\sum_{0< m\le a(n)\sqn}  
 P(X^{m}_{nt_i}\le \fl{nt_iv}+r_i\sqrt{n}\,)
   P( X^{m}_{nt_j}\le \fl{nt_jv}+r_j\sqrt{n} \,)\biggr]. \nn \end{align}
The terms above have been arranged so that the sums match up with the integrals 
in \eqref{sitheta}--\eqref{Ga2a}.  Limit \eqref{LF-1}  now follows
  because each sum  converges to the corresponding integral.  
To illustrate with the last term,  the convergence needed is 
\be\begin{aligned}
&  n^{-1/2} \sum_{ 0 < m\le a(n)\sqn}   
  P(X^{m}_{ns}\le \fl{nsv}+q\sqrt{n}\,)
 P( X^{m}_{nt}\le \fl{ntv}+r\sqrt{n} \,)\\[5pt]
&=  n^{-1/2} \sum_{ 0 < m\le a(n)\sqn}   
  P\Bigl\{\frac{X_{ns}- \fl{nsv}}{\sqrt n}\le q-\frac{m}{\sqrt{n}}\,\Bigr\}\\
&\qquad\qquad\qquad\qquad\qquad    \times 
 P\Bigl\{\frac{X_{nt}- \fl{ntv}}{\sqrt n}\le r-\frac{m}{\sqrt{n}}\,\Bigr\}  \\[5pt] 
&\underset{n\to\infty}\longrightarrow \; 
  \int_{0}^\infty \mP[B_{\sigma_1^2s}\le  q-x]\mP[ B_{\sigma_1^2t}\le r-x]\,dx. 
\end{aligned}\label{fdgoal6a}\ee
This follows from the CLT, a Riemann sum type argument and some estimation.  We skip the
  details.
With this we consider  Theorem \ref{main_thm}
proved.

\subsection*{References}
The results for i.i.d.~walks appeared, with a slightly different definition of the current process,  
in \cite{sepp-rw} and  \cite{kuma-08}.  Earlier related results appeared in
\cite{durr-gold-lebo}.

\section{Independent particles in a random environment}
\setcounter{equation}{0}
In this chapter we generalize the results of Section \ref{iidch} to   particles
in a random environment, with the purpose of seeing how the environment influences
the outcome.    In a fixed environment, that is, conditional on the environment, the particles evolve independently. 
But under the joint distribution of the walks and the environment,   the particles are no
longer independent because their evolution gives information about the environment.  
 The environment is static,  which means that it is fixed in
time.  

\subsection{Model and results}

We formulate the standard  one-dimensional nearest-neighbor random walk in random
environment (RWRE)  model and then put many particles in a fixed environment. 
We describe the (known) law of large numbers and central limit theorem of the walk 
itself,  and then the (newer) results on current fluctuations for many particles.

The space of environments is $\Omega = [0,1]^{\Z}$. For an environment $\w = \{ \w_x \}_{x\in\Z} \in \Omega$   let $\{X^{m,i}_\centerdot \}_{m,i}$ be a family of Markov chains 
on $\Z$ with distribution $P_\w$ determined  by the following properties:
\begin{enumerate}
\item   $\{X^{m,i}_\centerdot\}_{m \in\Z, i\in\N}$ are independent under the measure $P_\w$.  
\item \smallskip $P_\w(X^{m,i}_0 = m ) = 1$, for all $m\in \Z$ and $i\in\N$. 
\item \smallskip Each walk obeys these transition probabilities: 
\[
 P_\w(X^{m,i}_{n+1} = x+1 | X^{m,i}_n = x ) = 1- P_\w(X^{m,i}_{n+1} = x-1 | X^{m,i}_n = x ) = \w_x.
\]
\end{enumerate}
A system of random walks in a random environment may then be constructed by first choosing an environment $\w$ according to a probability distribution $P$ on $\Omega$ and then constructing the system of random walks $\{X^{m,i}_\centerdot \}$ as described above. The distribution $P_\w$ of the random walks given the environment $\w$ is called the \emph{quenched distribution}. 
The \emph{averaged distribution} $\P$ (also called \emph{annealed}) is obtained by averaging the quenched law over all environments:  $\P(\cdot) = \int_{\Omega} P_\w(\cdot) P(d\w)$. 
Expectations with respect to the measures $P$, $P_\w$ and $\P$ are denoted by $E_P$, $E_\w$, and $\E$, respectively, and variances with respect to the measure $P_\w$ will be denoted by $\Var_\w$. 
We  make the following assumptions on the environment. 

\medskip

{\bf Assumption 1.} 
The distribution $P$ on environments is i.i.d.~and uniformly elliptic. That is, $\{\w_x\}_{x\in\Z}$ are  i.i.d.~under the measure $P$, and there exists a $\kappa > 0$ such that $P(\w_x \in [\kappa, 1-\kappa] ) = 1$. 
 Furthermore,  
$E_P (\rho_0^{2+\e_0}) < 1$ for some $\e_0>0$, where $\rho_x = {(1-\w_x)}/{\w_x}$. 

\medskip
 
These assumptions put the RWRE in the regime where it has  
  transience to $+\infty$ with  a strictly positive speed  and also 
  satisfies a CLT   with an environment-dependent  centering.  We summarize these results
  here.   Define  a shift map on environments   by
$(\theta^x\w)_y=\w_{x+y}$. 
 Let 
  $T_1 = \inf\{ n\geq 0: X_n = 1 \}$ be the first  hitting time of   site $1\in\Z$ 
by a RWRE started at the origin, and  define  
 \be Z_{nt}(\w) = \vp \sum_{i=0}^{\fl{nt\vp}-1} (E_{\theta^i \w} (T_1) - \E T_1 ). \label{Znt}\ee 
The asymptotic speed $\vp$ is defined in the first statement of the next theorem
where we summarize some known basic facts about RWRE.  
 
\begin{theorem}[\cite{solomon, pete-phd, zeit-stflour}]\label{QCLTthm}  
Under the  assumptions made above we have these conclusions. 
\begin{enumerate}
\item The RWRE satisfies a law of large numbers with positive speed. That is,  \begin{equation}\label{LLN}
\lim_{n\to\infty} \frac{X_n}{n} = \frac{1-E_P (\rho_0)}{1+E_P (\rho_0)} \equiv \vp > 0, 
\qquad \text{$\P$-a.s.}
\end{equation}
\item The RWRE satisfies a quenched functional central limit theorem with an 
environment-dependent  centering.  Let 
\[
 B^n(t) = \frac{X_{nt} - nt\vp + Z_{nt}(\w)}{\s_1 \sqrt{n}}, \qquad \text{where } \s_1^2 = \vp^3 E_P( \Var_\w T_1 ).
\] 
Then, for $P$-a.e.\ environment $\w$, under the quenched measure $P_\w$, $B^n(\cdot)$ converges weakly to standard Brownian motion as $n\to\infty$. 
\item Let
\[
 \zeta^n(t) = \frac{Z_{nt}(\w)}{\s_2 \sqrt{n}}, \qquad\text{where } \s_2^2 = \vp^2 \Var( E_\w T_1).
\]
Then, under the measure $P$ on environments, $\zeta^n(\cdot)$ converges weakly to standard Brownian motion as $n\to\infty$. 
\item The RWRE satisfies an averaged central limit theorem. Let
\[
 \mathbb{B}^n(t) = \frac{X_{nt} - nt\vp  }{\s \sqrt{n}}, \qquad \text{where } \s^2 = \s_1^2 + \s_2^2.
\] 
Then, under the averaged measure $\P$, $\mathbb{B}^n(\cdot)$ converges weakly to standard Brownian motion. 
\end{enumerate}
\end{theorem}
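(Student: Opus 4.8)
The statement to be proved is Theorem~\ref{QCLTthm}; parts (1)--(3) are the classical facts about transient ballistic RWRE and I will take them as given (Solomon's criterion for (1); a hitting-time/regeneration decomposition of the walk together with a Lindeberg-type argument for the quenched statement (2); a central limit theorem for the one-sided stationary ergodic sequence $i\mapsto E_{\theta^i\w}(T_1)$ for (3); see the cited references). The plan is to derive the averaged statement (4) from (2) and (3) by means of the exact identity
\[
 X_{nt} - nt\vp \;=\; \bigl(X_{nt} - nt\vp + Z_{nt}(\w)\bigr) - Z_{nt}(\w) \;=\; \s_1\sqrt n\, B^n(t) - \s_2\sqrt n\, \zeta^n(t),
\]
so that $\mathbb{B}^n = (\s_1/\s)\,B^n - (\s_2/\s)\,\zeta^n$. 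Here $B^n$ is the fluctuation of the walk \emph{around} its quenched mean path and $\zeta^n$ is the fluctuation \emph{of} that mean path, a functional of the environment alone. The point to exploit is that these two pieces become asymptotically independent under the averaged measure, the reason being that the quenched limit in (2) is \emph{nonrandom} --- standard Brownian motion --- and hence carries no information about $\w$.

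First I would establish convergence of the finite-dimensional distributions by conditioning on the environment and computing characteristic functions. Fix $0<t_1<\dots<t_k$, reals $\theta_1,\dots,\theta_k$, and set $q(\thvec)=\sum_{i,j}\theta_i\theta_j\,(t_i\wedge t_j)$, the variance of $\sum_j\theta_j W(t_j)$ for a standard Brownian motion $W$. Since $\zeta^n$ is a function of $\w$ alone, conditioning on $\w$ gives
\begin{align*}
 \E\exp\Bigl(i\textstyle\sum_j\theta_j\mathbb{B}^n(t_j)\Bigr)
 &= E_P\Bigl[\,\exp\Bigl(-\tfrac{i\s_2}{\s}\textstyle\sum_j\theta_j\zeta^n(t_j)\Bigr)\, E_\w\exp\Bigl(\tfrac{i\s_1}{\s}\textstyle\sum_j\theta_j B^n(t_j)\Bigr)\Bigr].
\end{align*}
By (2) the inner quenched characteristic function converges, for $P$-a.e.\ $\w$, to the constant $c:=\exp\bigl(-\tfrac{\s_1^2}{2\s^2}q(\thvec)\bigr)$; it is bounded by $1$ and the outer factor has modulus $\le 1$, so replacing the inner factor by $c$ costs an error tending to $0$ by dominated convergence. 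What remains is $c\cdot E_P\exp\bigl(-\tfrac{i\s_2}{\s}\sum_j\theta_j\zeta^n(t_j)\bigr)$, and by (3) this $P$-expectation tends to $\exp\bigl(-\tfrac{\s_2^2}{2\s^2}q(\thvec)\bigr)$. Since $\s^2=\s_1^2+\s_2^2$, the product is $\exp\bigl(-\tfrac12 q(\thvec)\bigr)$, the characteristic function of $\bigl(W(t_1),\dots,W(t_k)\bigr)$, which is the desired finite-dimensional limit.

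Next I would supply tightness of $\{\mathbb{B}^n\}$ in the Skorohod space under $\P$, upgrading the finite-dimensional convergence to weak convergence of processes. Since $\mathbb{B}^n$ is a fixed linear combination of $B^n$ and $\zeta^n$, it suffices that each is $\P$-tight: tightness of $\zeta^n$ under $P$ is part of (3), and tightness of $B^n$ under the averaged measure follows from the quenched modulus-of-continuity (or higher-moment) estimates underlying (2), which are uniform in $\w$ by uniform ellipticity and therefore survive integration over $P$.

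The main obstacle is precisely the asymptotic decoupling of the quenched fluctuation $B^n$ from the environmental fluctuation $\zeta^n$ --- the step of pulling the deterministic quenched limit $c$ out of the $E_P$-average --- and, at the technical level, the propagation of quenched tightness to the averaged measure. Granting (1)--(3) this is the only non-routine point, everything else being the bookkeeping above. For a self-contained account the genuinely substantial work lies in (2) and (3) themselves --- the quenched CLT with environment-dependent centering and the CLT for the environmental partial sums $Z_{nt}(\w)$ --- for which the hitting-time decomposition of the walk and the martingale and ergodic-theoretic machinery of Solomon, Zeitouni and Peterson are the standard tools.
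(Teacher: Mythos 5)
The paper does not actually prove Theorem~\ref{QCLTthm}: it cites parts (1)--(4) as known facts from \cite{solomon, pete-phd, zeit-stflour} and uses them as input for the current-fluctuation results that follow. So there is no ``paper's own proof'' to compare against. Your reduction of part (4) to parts (2) and (3) via the decomposition $\mathbb{B}^n=(\s_1/\s)B^n-(\s_2/\s)\zeta^n$, the conditioning on $\w$, and the conditional characteristic-function computation (pulling the a.s.\ constant limit $c$ of the quenched characteristic function out of the $E_P$-average by bounded convergence, then applying the environment CLT from (3)) is correct and is exactly the standard argument that appears in the references you cite, in particular in Peterson's thesis. The tightness remark is also sound: since both $B^n$ and $\zeta^n$ converge to continuous limits, tightness of each under $\P$ (the former via uniformly-elliptic quenched moment bounds integrated over $P$, the latter from (3)) gives tightness of the fixed linear combination. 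In short, you correctly identified that only (4) requires any new argument given (1)--(3), and your argument for it is the standard one.
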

 
 The requirement that $E_P (\rho_0^2) < 1$  cannot be relaxed in order 
for the CLT to hold \cite{kest-kozl-spit, pete-zeit-09}. 
Centering by $nt\vp - Z_{nt}(\w)$ in  the quenched CLT is   the same 
as centering by the quenched mean on account of this bound: 
\be \label{limqmeanZ}
 \lim_{n\to\infty} P\Bigl\{ \w: \sup_{k\leq n} | E_\w (X_k) - k\vp + Z_k(\w) | \geq \e \sqrt{n} \Bigr\} = 0, \quad \forall \e>0. 
\ee
But $Z_{nt}(\w)$ is more convenient because it is  a sum of  stationary, ergodic random variables.  
 
 These properties of the walk are sufficient for describing the current 
 fluctuations.  Assumptions on the initial occupation variables  
  $\eta(0)=\{\eta_x(0)\}$ are similar to those in the previous section.  
We will allow the distribution  of $\eta(0)$ to depend on the environment (in a measurable way), 
and we  assume a certain  stationarity.  

\medskip

{\bf Assumption 2.} 
Given the environment $\w$, variables 
 $\{ \eta_x(0) \}$ are independent and independent of the   random walks. 
 The conditional distribution of $\eta_x(0)$ given $\w$ is denoted by 
  $  P_\w(\eta_x(0) = k)$, and these   measurable functions of $\w$ satisfy  
   $P_\w( \eta_x(0) = k ) = P_{\theta^x\w}( \eta_0(0) = k )$. Also, 
 for some $\e_0>0$,
\be
 E_P [\,E_\w(\eta_x(0))^{2+\e_0} + \Var_\w(\eta_x(0))^{2+\e_0} ]<\infty.   \label{vpmomass1}\ee
 
 \medskip
 
Let 
\[  \emean = E_P[ E_\w(\eta_x(0)) ] = \E [\eta_x(0)]
\quad\text{and}\quad    \sigma_0^2=E_P\left[ \Var_\w(\eta_x(0)) \right].  \]
 The current is defined as before:
 \be\begin{aligned}
Y_n(t,r) &= 
  \sum_{m\le 0} \sum_{k=1}^{\eta_m(0)} \indd{ X^{m,k}_{nt} >  \fl{nt\vp}+ r\sqrt n \,}\\
&\qquad\qquad\qquad 
-\;  \sum_{m> 0} \sum_{k=1}^{\eta_m(0)} \indd{ X^{m,k}_{nt} \leq \fl{nt \vp}+r\sqrt n\,}. 
 \end{aligned}\label{defYn}\ee
 
Now for the results, beginning with the quenched mean of the current.   This turns out to
essentially follow 
 the   correction  $Z_{nt}(\w)$ of the quenched CLT,  which is of order $\sqrt{n}$.
 Assumptions 1 and 2 are in force for all the results that follow. 
 
 \begin{theorem} \label{QMCurrent}
For any $\e>0$, $0<R, T<\infty$,  
\be  \lim_{n\to\infty}  P\Bigl\{ \w : \sup_{\substack{t\in[0,T]\\ r\in[-R,R]}}  \,
\bigl\lvert E_\w Y_n(t,r) +\emean r \sqrt n + \emean Z_{nt}(\w)  \bigr\rvert \ge \e\sqrt n\, \Bigr\} =0.  
\label{limYZ}\ee
Consequently 
 the two-parameter process $\{ n^{-1/2} E_\w Y_n(t,r): t\in \R_+, \; r\in \R \}$ 
 converges weakly to $\{ -\emean r+ \emean \s_2 W(t) : t\in \R_+, \; r\in\R\}$
 where $W(\cdot)$ is a standard Brownian motion. 
  \end{theorem}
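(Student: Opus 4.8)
The plan is to compute the quenched mean $E_\w Y_n(t,r)$ exactly in terms of quenched hitting/occupation probabilities of a single RWRE, and then show that the resulting expression, after subtracting $-\emean r\sqrt n - \emean Z_{nt}(\w)$, is uniformly small on the compact set $[0,T]\times[-R,R]$, with probability tending to $1$ under $P$. First I would take the conditional expectation of \eqref{defYn} given $\w$. Since given $\w$ the initial occupation variables are independent of the walks with $E_\w\eta_m(0)$ a stationary function of the environment, and since $E_{\theta^m\w}\eta_0(0)$ has mean $\emean$ under $P$, I can write
\[
E_\w Y_n(t,r)=\sum_{m\le 0}E_{\theta^m\w}(\eta_0(0))\,P_\w\{X^m_{nt}>\fl{nt\vp}+r\sqrt n\}
-\sum_{m>0}E_{\theta^m\w}(\eta_0(0))\,P_\w\{X^m_{nt}\le\fl{nt\vp}+r\sqrt n\}.
\]
Telescoping as in the i.i.d.\ case, this equals $E_\w\big(X^0_{nt}-\fl{nt\vp}-\fl{r\sqrt n}\big)$ with the occupation weights folded in; more precisely, after summation by parts the main term is $-\emean\big(E_\w X^0_{nt}-nt\vp\big)-\emean r\sqrt n+O(1)$ plus an error coming from the fluctuations of $E_{\theta^m\w}\eta_0(0)$ around $\emean$.

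Next I would invoke the quenched CLT input \eqref{limqmeanZ}, which says precisely that $E_\w X^0_{nt}-nt\vp+Z_{nt}(\w)$ is $o(\sqrt n)$ uniformly for $t\in[0,T]$, in $P$-probability. Substituting this gives $E_\w Y_n(t,r)=-\emean r\sqrt n-\emean Z_{nt}(\w)+o(\sqrt n)$ at each fixed $(t,r)$. To upgrade to uniformity over $(t,r)\in[0,T]\times[-R,R]$, I would use monotonicity of $r\mapsto Y_n(t,r)$ (it is nonincreasing, being a difference of a nonincreasing and a nondecreasing indicator sum) together with a finite-grid argument: on a mesh of size $\delta$ in $r$, control the values at grid points by the fixed-$(t,r)$ statement, and bound oscillations between grid points by $\emean\delta\sqrt n+O(1)$ plus the variance of the occupation weights over a window of length $\delta\sqrt n$; the latter is $O(\sqrt n)$ in $L^2$ by the moment assumption \eqref{vpmomass1}, which after dividing by $\sqrt n$ and an $L^2$-maximal/Chebyshev estimate contributes at most $C\sqrt\delta$. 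In $t$ one can either use a similar grid together with a modulus-of-continuity estimate for $t\mapsto Z_{nt}(\w)$ (a sum of stationary ergodic increments, so tight on $[0,T]$) and for $t\mapsto E_\w X_{nt}$, or simply note that \eqref{limqmeanZ} is already stated with a $\sup_{k\le n}$.

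The main obstacle is the passage from fixed-$(t,r)$ control to the uniform supremum, specifically handling the environment-driven fluctuations of the occupation weights $E_{\theta^m\w}\eta_0(0)-\emean$ after summation by parts: one must show this contributes $o(\sqrt n)$ to $E_\w Y_n$ uniformly, not merely pointwise. I expect to handle it by an $L^2(P)$ bound on $\sup_{|r|\le R}\big|n^{-1/2}\sum_{0<m\le r\sqrt n}(E_{\theta^m\w}\eta_0(0)-\emean)\,q_{n,m}\big|$, where $q_{n,m}=P_\w\{\cdots\}$ is bounded in $[0,1]$ and varies slowly in $m$; stationarity and the $2+\e_0$ moment in \eqref{vpmomass1} give a Marcinkiewicz–Zygmund / maximal inequality yielding the required smallness. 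Once \eqref{limYZ} is established, the stated weak convergence of $n^{-1/2}E_\w Y_n(t,r)$ to $-\emean r+\emean\s_2 W(t)$ follows immediately from part (3) of Theorem \ref{QCLTthm}, which gives $n^{-1/2}Z_{nt}(\w)\Rightarrow \s_2 W(t)$ under $P$, combined with \eqref{limYZ} via Slutsky.
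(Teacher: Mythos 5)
The central step of your proposal --- the ``telescoping as in the i.i.d.\ case'' that converts the sum over starting points $m$ into $E_\w\bigl(X^0_{nt}-\fl{nt\vp}-\fl{r\sqrt n}\bigr)$ --- is exactly the step that fails for RWRE, and it is a genuine gap. For classical walks the identity
\[
\sum_{m\le 0}P(X^m_{nt}>x)-\sum_{m>0}P(X^m_{nt}\le x)=E(X^0_{nt})-x+O(1)
\]
rests on the translation invariance $P(X^m_{nt}>x)=P(X^0_{nt}>x-m)$. In a fixed environment $\w$ this fails: the walk started at $m$ lives in the shifted environment $\theta^m\w$, so $P_\w(X^m_{nt}>x)$ is not a translate of $P_\w(X^0_{nt}>\cdot)$, and the family $\{P_\w(X^m_{nt}\in\cdot)\}_m$ does not collapse to a single expectation by summation by parts. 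Consequently you cannot reduce the quenched mean of $Y_n$ to $E_\w X^0_{nt}$ and then invoke \eqref{limqmeanZ}. Note also that \eqref{limqmeanZ} only controls the quenched \emph{mean} of a single walk; it says nothing about the shape of the quenched laws that enter the sum, which is what you would need to make any telescoping-type argument go through.

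What the paper does instead is use the full distributional quenched CLT (part 2 of Theorem \ref{QCLTthm}) to replace each $P_\w(X^m_{nt}>\fl{nt\vp}+r\sqrt n)$ by the Gaussian value $\Phi_{\sigma_1^2 t}$ evaluated at an argument that contains the walk-specific correction $Z_{nt}(\theta^m\w)$. It then uses the ergodic-sum representation \eqref{Znt} of $Z_{nt}$ to show that, uniformly over the relevant window $|m|\le a(n)\sqrt n$, the corrections $Z_{nt}(\theta^m\w)$ can be replaced by $Z_{nt}(\w)$; then it replaces $E_\w(\eta_0(m))$ by $\dc$ via ergodicity; and finally it does a Riemann sum and the calculus identity with $\Psi_{\alpha^2}$ to extract $-\dc r-\dc n^{-1/2}Z_{nt}(\w)$. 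Your sketch skips both of the hard ingredients: (a) the quenched CLT controlling the whole quenched law (not just the mean) of every walk $X^m_{nt}$, and (b) the shift-removal step $Z_{nt}(\theta^m\w)\to Z_{nt}(\w)$. Your treatment of the occupation-weight fluctuations via an $L^2$ maximal bound is plausible and not fundamentally different from the paper's ergodicity argument, and your monotonicity-in-$r$ idea for the uniform supremum is sensible; those parts could be salvaged. But as written the proposal does not establish \eqref{limYZ}, because its pivotal algebraic reduction is false for RWRE.
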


Next we center the current at its 	quenched mean by defining 
\[  V_n(t,r)  =Y_n(t,r) -E_\w [Y_n(t,r)]. \]
 The fluctuations of $V_n(t,r)$ are of order $n^{1/4}$ and similar to the current fluctuations  of classical walks from the previous section.  
 Recall the definitions of $\Gamma_1$ and $\Gamma_2$ from \eqref{Ga1}--\eqref{Ga2}
  and 
abbreviate 
\be   \Gamma\bigl((s,q),(t,r)\bigr) =  \emean \Gamma_1\bigl((s,q),(t,r)\bigr) +\sigma_0^2
 \Gamma_2\bigl((s,q),(t,r)\bigr).  \label{Gammadef}\ee 

Let $(V,Z)=( V(t,r), Z(t):  t\in\R_+, r\in\R)$ be the  process whose joint
distribution is defined as follows:
\begin{itemize}  
\item[(i)] Marginally, $Z(\cdot)=\sigma_2 W(\cdot)$ for a standard Brownian motion $W(\cdot)$.
\item[(ii)] Conditionally on  the path $Z(\cdot)\in C(\R_+,\R)$,  $V$ is the mean zero Gaussian process indexed 
by $\R_+\times\R$ with covariance 
\be   \Ev[V(s,q)V(t,r)\,\vert\, Z(\cdot)]=\Gamma\bigl((s,q+Z(s)),(t,r+Z(t))\bigr)
 \label{vpcov}\ee\end{itemize}
 for $(s,q), (t,r)\in \R_+\times\R$.  
An equivalent way to say this is to first take independent $(V^0,Z)$ with $Z$ as above
and 
  $V^0=\{V^0(t,r): (t,r)\in\R_+\times\R\}$    the mean zero Gaussian process 
with 
covariance   $ \Gamma\big((s,q),(t,r)\big)$ from \eqref{Gammadef}, and then
define $V(t,r)=V^0(t,r+Z(t))$.  

\medskip

\begin{theorem}
Under the averaged probability $\P$, as $n\to\infty$,  
the finite-dimensional distributions of the joint process
$\bigl\{( n^{-1/4}V_n(t,r), n^{-1/2} Z_{nt}(\w) ):  t\in\R_+, r\in\R\bigr\}$ 
converge to those of the process
$(V,Z)$.  
\label{findimthm}\end{theorem}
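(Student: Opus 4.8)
The plan is to mirror the proof of Theorem~\ref{main_thm}: work conditionally on the environment $\w$, apply the Lindeberg-Feller theorem under the quenched measure $P_\w$ to obtain a conditional Gaussian limit for $n^{-1/4}V_n$, and then average over the environment, using Theorem~\ref{QCLTthm}(3) to produce the Brownian shift $Z$. Fix $N$, times $0<t_1<\dots<t_N$, spatial points $r_1,\dots,r_N\in\R$, and real vectors $\thvec=(\theta_i)_{i\le N}$ and $\thvec'=(\theta_i')_{i\le N}$. By the Cram\'er-Wold device it suffices to show
\[
  \E\Bigl[\exp\Bigl\{\, i n^{-1/4}\sum_i\theta_i V_n(t_i,r_i)+i\sum_i\theta_i'\, n^{-1/2}Z_{nt_i}(\w)\,\Bigr\}\Bigr]
  \;\to\; \E\Bigl[\exp\Bigl\{\, i\sum_i\theta_i V(t_i,r_i)+i\sum_i\theta_i' Z(t_i)\,\Bigr\}\Bigr].
\]
Given $\w$, the occupation variables $\{\eta_m(0)\}$ and the walks $\{X^{m,k}_\cdot\}$ are independent, so exactly as in \eqref{sum1}--\eqref{umbar} we may write $n^{-1/4}\sum_i\theta_i V_n(t_i,r_i)=\sum_{m\in\Z}u(m)$ with $\{u(m)\}$ independent and mean zero under $P_\w$ and $|u(m)|\le C n^{-1/4}(\eta_m(0)+E_\w\eta_m(0))$. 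Using \eqref{vpmomass1} and shift-invariance of $P$, exactly as in Lemma~\ref{Slemma}, there is a sequence $a(n)\nearrow\infty$ (growing slowly) such that $W_n^*:=\sum_{|m|\le a(n)\sqn}u(m)$ satisfies $E_P E_\w|W_n-W_n^*|^2\to0$; so we may work with the truncated sum.

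First I would carry out the quenched Lindeberg-Feller computation. By the random-sum covariance identity \eqref{rwcovar}, the quenched variance $\Sigma_n(\w):=\sum_{|m|\le a(n)\sqn}E_\w[u(m)^2]$ is a sum over $m$ of quenched one- and two-point probabilities of the form $P_\w(X^m_{ns}\le\fl{ns\vp}+q\sqn)$. Writing $X^{m,\cdot}-m$ as a walk from the origin in the environment $\theta^m\w$, Theorem~\ref{QCLTthm}(2) together with the bound \eqref{limqmeanZ} yields
\[
  P_\w\bigl(X^m_{ns}\le\fl{ns\vp}+q\sqn\bigr)=\Phi_{\s_1^2 s}\!\Bigl(q-\tfrac m{\sqn}+\tfrac{Z_{ns}(\theta^m\w)}{\sqn}\Bigr)+o(1),
\]
and since $Z_{ns}(\theta^m\w)-Z_{ns}(\w)$ is a difference of two partial sums of the stationary ergodic sequence $\vp\,(E_{\theta^i\w}(T_1)-\E T_1)$ over only $|m|\le a(n)\sqn$ indices, a maximal/Chebyshev estimate gives $\sup_{|m|\le a(n)\sqn}|Z_{ns}(\theta^m\w)-Z_{ns}(\w)|=o(\sqn)$ in $P$-probability; hence $Z_{ns}(\theta^m\w)/\sqn$ may be replaced by $\zeta_n^{s}:=n^{-1/2}Z_{ns}(\w)$. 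Performing the sum over $m$ as in \eqref{fdgoal6a}, with each spatial variable $q$ shifted by $\zeta_n^{s}$, then shows that $\Sigma_n(\w)=F(\zeta_n^{t_1},\dots,\zeta_n^{t_N})+o(1)$ in $P$-probability, where
\[
  F(x_1,\dots,x_N):=\sum_{i,j}\theta_i\theta_j\,\Gamma\bigl((t_i,r_i+x_i),(t_j,r_j+x_j)\bigr)
\]
with $\Gamma$ from \eqref{Gammadef}; the Lindeberg negligibility condition follows from \eqref{vpmomass1} just as in the corresponding bound of Section~\ref{rwpfsec}. Since the proof of Lindeberg-Feller controls $|E_\w[\exp\{i n^{-1/4}\sum_i\theta_i V_n\}]-\exp\{-\tfrac12\Sigma_n(\w)\}|$ by the Lindeberg sum and $\Sigma_n(\w)$, without requiring $\Sigma_n$ to converge, we conclude that in $P$-probability
\[
  E_\w\bigl[\exp\{i n^{-1/4}\textstyle\sum_i\theta_i V_n(t_i,r_i)\}\bigr]=\exp\{-\tfrac12 F(\zeta_n^{t_1},\dots,\zeta_n^{t_N})\}+o(1).
\]

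Then I would average over the environment. Conditioning on $\w$ and inserting the last display,
\[
  \E\Bigl[\exp\Bigl\{i n^{-1/4}\sum_i\theta_i V_n(t_i,r_i)+i\sum_i\theta_i'\zeta_n^{t_i}\Bigr\}\Bigr]
  =E_P\bigl[\,g(\zeta_n^{t_1},\dots,\zeta_n^{t_N})\,\bigr]+o(1),
\]
where $g(x)=\exp\{i\sum_i\theta_i' x_i-\tfrac12 F(x)\}$ is bounded and continuous. By Theorem~\ref{QCLTthm}(3) (weak convergence under $P$ of $\zeta^n(\cdot)$ to a standard Brownian motion $W$), $(\zeta_n^{t_1},\dots,\zeta_n^{t_N})\Rightarrow\s_2(W(t_1),\dots,W(t_N))$, so the limit equals $\E\bigl[g(\s_2 W(t_1),\dots,\s_2 W(t_N))\bigr]$. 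Finally, the definition \eqref{vpcov} of $(V,Z)$ says precisely that $Z(t_i)=\s_2 W(t_i)$ and $\E[\exp\{i\sum_i\theta_i V(t_i,r_i)\}\mid Z(\cdot)]=\exp\{-\tfrac12 F(Z(t_1),\dots,Z(t_N))\}$, whence $\E[g(\s_2 W(t_1),\dots,\s_2 W(t_N))]=\E[\exp\{i\sum_i\theta_i V(t_i,r_i)+i\sum_i\theta_i' Z(t_i)\}]$, as required.

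The hard part will be the uniformity in the quenched CLT. Theorem~\ref{QCLTthm}(2) is an almost-everywhere statement for a single walk from the origin, whereas $\Sigma_n(\w)$ involves $O(\sqn)$ shifted environments $\theta^m\w$ with $|m|\le a(n)\sqn$; making the passage to the limit uniform requires (a) the slow growth of $a(n)$, (b) a maximal inequality for the partial sums of $\vp\,(E_{\theta^i\w}(T_1)-\E T_1)$ to control $\sup_{|m|\le a(n)\sqn}|Z_{ns}(\theta^m\w)-Z_{ns}(\w)|$, and (c) a quantitative (Berry-Esseen or local-limit type) version of the quenched CLT, uniform over the relevant spatial windows, so that the $o(1)$ errors above remain negligible after multiplication by $a(n)$ in the variance sum. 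The remaining steps are bookkeeping adaptations of the independent-walk computation of Section~\ref{rwpfsec}.
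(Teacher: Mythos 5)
Your proposal is correct and takes essentially the same approach the paper gestures at (conditioning on $\w$, quenched Lindeberg-Feller with the $\w$-dependent variance identified as a function of $n^{-1/2}Z_{nt_i}(\w)$ via the quenched CLT centering, then averaging over $\w$ and invoking Theorem~\ref{QCLTthm}(3) for the Brownian shift). You also correctly flag the real technical work — the quantitative Lindeberg estimate so that the random conditional variance need not converge, the maximal-inequality control of $Z_{ns}(\theta^m\w)-Z_{ns}(\w)$ over $|m|\le a(n)\sqn$, and the uniformity of the quenched CLT over shifted environments — which is exactly where \cite{pete-sepp} spends its effort.
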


\medskip
 
 Thus up to a random shift of the spatial argument  we see the same limit process as for
 classical walks: 
the  process
   $\bar V(t,r)=V(t,r-Z(t))$ is a
  mean zero Gaussian process  with
covariance   
$ \Ev[\bar V (s,q)\bar V (t,r)]= \Gamma\big((s,q),(t,r)\big)$ from \eqref{Gammadef}.  
 
As for  classical walks, let us look at the stationary case. 
 The invariant distribution is now valid under a fixed  $\w$:
  the $\{\eta_x(0)\}$ are independent and 
\be \label{fdef}
\eta_x(0) \sim \text{Poisson}(\emean f(\theta^x\w)),
\quad\text{where }\ 
f(\w) = 
\frac{\vp}{\w_0}\biggl( 1 + \sum_{i=1}^{\infty} \prod_{j=1}^i \rho_{j} \biggr). 
\ee 
In this case, $E_\w \eta_0(0) = \Var_\w \eta_0(0) = \emean f(\w)$.  By Assumption 1
  $E_P (\rho_0^{2+\e}) < 1$ for some $\e>0$, and from that
 it can be shown that $E_P[ f(\w)^{2+\e}] < \infty$. Therefore Assumption 2 holds. 
One can also check that, as for classical walks in \eqref{fluxex},  
in this stationary situation  the flux is   linear:     $\flux(\emean)=\emean\vp$.

Recall from Corollary \ref{fBMcor} that for 
  classical random walks the limit process (with fixed space variable
  $r$) in the case  $\emean = \s_0^2$ is
  fractional Brownian motion $\xi$ with covariance 
\[
\Ev[ \xi(s)\xi(t) ] = \frac{\emean\s_1}{\sqrt{2\pi}} ( \sqrt{s} + \sqrt{t} - \sqrt{|s-t|}\,).  
\]
 For RWRE, $\emean = \s_0^2$ implies that
\be \label{condcov}  \begin{aligned} 
&\Ev[V(s,0)V(t,0)\,\vert\, Z(\cdot)]\\
&= \emean \bigl[ \Psi_{\s_1^2 s}(-Z(s)) + \Psi_{\s_1^2 t}(Z(t)) - \Psi_{\s_1^2|s-t|}(Z(t)-Z(s)) \bigr]. 
\end{aligned}\ee
Since the right hand side of \eqref{condcov} is a non-constant random variable, the marginal distribution of $V(t,0)$ is non-Gaussian. Taking expectations of \eqref{condcov} with respect to $Z(\cdot)$ gives that 
\be \label{specialcasecov}
\Ev[V(s,0)V(t,0)] = \frac{\emean\sqrt{\s_1^2 + \s_2^2}}{\sqrt{2\pi}} ( \sqrt{s} + \sqrt{t} - \sqrt{|s-t|}). 
\ee
Thus we get this conclusion: 
 if $\emean = \s_0^2$ for RWRE 
  then the limit process $V(\cdot,0)$ has the same 
  covariance as  fractional Brownian motion, but it is not a Gaussian process. 
 
 As the reader may have surmised,  we can remove  the  random shift  $Z$
  from the limit   process $V$  by introducing 
  the environment-dependent
shift  in the current process itself.    We state this result too.
For $(t,r)\in\R_+\times\R$ define 
\be\begin{aligned}
Y_n^{(q)}(t,r) &= \sum_{m> 0} \sum_{k=1}^{\eta_m(0)} 
\indd{ X^{m,k}_{nt} \leq nt\vp -Z_{nt}(\w)+r\sqrt n\,} \\
&\qquad\qquad 
- \sum_{m\leq 0} \sum_{k=1}^{\eta_m(0)} \indd{ X^{m,k}_{nt} > nt\vp -Z_{nt}(\w)+ r\sqrt n \,}   
\end{aligned}\label{defYnq}\ee
and  its centered version
\[  V_n^{(q)}(t,r) = Y_n^{(q)}(t,r) - E_\w Y_n^{(q)}(t,r).  \]
The process $V_n^{(q)}$ has   the same limit as classical 
random walks.     
Let $V^0=\{V^0(t,r): (t,r)\in\R_+\times\R\}$  be  the mean zero Gaussian process 
with 
covariance   $ \Gamma\big((s,q),(t,r)\big)$ from \eqref{Gammadef}.  

\begin{theorem}
Under the averaged probability $\P$, as $n\to\infty$,  
the finite-dimensional distributions of the joint process 
$\bigl\{( n^{-1/4}V_n^{(q)}(t,r), n^{-1/2} Z_{nt}(\w) ):  t\in\R_+, r\in\R\bigr\}$ 
converge to those of the process
$(V^0,Z)$ where $V^0$ and $Z$ are independent.  
\label{findimthmVq}\end{theorem}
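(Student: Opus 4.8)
The plan is to exploit the feature built into the definition \eqref{defYnq}: the extra shift $-Z_{nt}(\w)$ in the threshold is exactly what cancels the environment-dependent centering $m+nt\vp-Z_{nt}(\w)$ of a walk started at $m$ (legitimate by Theorem \ref{QCLTthm}(2), once one also knows $Z_{nt}(\theta^m\w)-Z_{nt}(\w)=o(\sqrt n)$ uniformly over $|m|\le a(n)\sqrt n$ for a slowly growing $a(n)\nearrow\infty$). Equivalently, up to a harmless $O(1)$ adjustment of the threshold (negligible after the $n^{-1/4}$ scaling), $Y_n^{(q)}(t,r)=-Y_n\bigl(t,\,r-n^{-1/2}Z_{nt}(\w)\bigr)$, hence $V_n^{(q)}(t,r)=-V_n\bigl(t,\,r-n^{-1/2}Z_{nt}(\w)\bigr)$, with $Y_n,V_n$ as in Theorem \ref{findimthm}. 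Since one cannot legitimately substitute the random argument $r-n^{-1/2}Z_{nt}(\w)$ into the merely finite-dimensional limit supplied by Theorem \ref{findimthm}, I would instead rerun the conditional (quenched) argument underlying that theorem and verify that the random spatial shift now drops out of the limiting covariance.

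The heart of the matter is thus a conditional central limit theorem. Fixing space--time points $(t_1,r_1),\dots,(t_N,r_N)$, $\thvec\in\R^N$, and $V_n^{(q)}(\thvec):=\sum_j\theta_j\,n^{-1/4}V_n^{(q)}(t_j,r_j)$, and conditioning on the environment $\w$: under $P_\w$ the quantity $Z_{nt}(\w)$ is deterministic and the walks are independent, so $V_n^{(q)}(\thvec)$ is, exactly as in \eqref{sum1}--\eqref{umbar}, a sum over starting sites $m$ of independent mean-zero terms, and the first move is the Lindeberg--Feller theorem applied conditionally on $\w$. Truncation to $|m|\le a(n)\sqrt n$ and the Lindeberg negligibility condition go through as in Lemma \ref{Slemma} and \eqref{lf:tech0}, using $|u^{(q)}(m)|\le Cn^{-1/4}(\eta_m(0)+\emean)$, shift-invariance of the environment, and the $(2+\e_0)$-moment bound \eqref{vpmomass1} (which lets $a(n)\nearrow\infty$ slowly enough for all the required estimates at once). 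For the variance one uses the random-sum covariance formula \eqref{rwcovar}: after recentering the walk by $m+nt\vp-Z_{nt}(\theta^m\w)$ and invoking $Z_{nt}(\theta^m\w)-Z_{nt}(\w)=o(\sqrt n)$, the quenched probabilities $P_\w\bigl(X^m_{nt}\le nt\vp-Z_{nt}(\w)+r\sqrt n\bigr)$ become $\Phi_{\sigma_1^2}(r-m/\sqrt n)+o(1)$ with $\sigma_1^2=\vp^3E_P(\Var_\w T_1)$ and \emph{no} surviving $Z$-term, so the sums over $m$ become Riemann sums for precisely the integrals \eqref{Ga1}--\eqref{Ga2} that enter $\Gamma$ in \eqref{Gammadef}; their convergence also uses the quenched CLT uniformly in the starting site over $|m|\le a(n)\sqrt n$ and the ergodic theorem to replace the $\w$-dependent weights $E_\w\eta_m(0)$, $\Var_\w\eta_m(0)$ by $\emean$, $\sigma_0^2$. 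These are the same ingredients as in the proof of Theorem \ref{findimthm}, the only change being that $Z_{nt}(\w)$ now cancels in the threshold rather than surviving. The outcome is that for $P$-a.e.\ $\w$, under $P_\w$,
\[
E_\w\bigl[\exp\{i\,V_n^{(q)}(\thvec)\}\bigr]\ \longrightarrow\ e^{-\sigma(\thvec)^2/2}
\]
with the \emph{deterministic} variance $\sigma(\thvec)^2=\sum_{i,j}\theta_i\theta_j\,\Gamma\bigl((t_i,r_i),(t_j,r_j)\bigr)$ of \eqref{sitheta}--\eqref{Gammadef}, which depends neither on $\w$ nor on the path $n^{-1/2}Z_{n\,\cdot}(\w)$. (The overall minus sign is immaterial, since $-V^0\overset{d}{=}V^0$.)

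The last step averages out the environment. Fixing also $\boldsymbol\lambda\in\R^N$ and writing $G_n(\w)=E_\w[\exp\{i\,V_n^{(q)}(\thvec)\}]$, conditioning on $\w$ gives $\E[\exp\{i\,V_n^{(q)}(\thvec)+i\sum_j\lambda_j n^{-1/2}Z_{nt_j}(\w)\}]=E_P[\exp\{i\sum_j\lambda_j n^{-1/2}Z_{nt_j}(\w)\}\,G_n(\w)]$. Since $|G_n|\le 1$ and $G_n\to e^{-\sigma(\thvec)^2/2}$ $P$-a.s., dominated convergence replaces $G_n(\w)$ by the constant $e^{-\sigma(\thvec)^2/2}$ at vanishing cost, while Theorem \ref{QCLTthm}(3) gives $E_P[\exp\{i\sum_j\lambda_j n^{-1/2}Z_{nt_j}(\w)\}]\to\E[\exp\{i\sum_j\lambda_j\sigma_2 W(t_j)\}]$. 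Hence the joint characteristic function tends to $e^{-\sigma(\thvec)^2/2}\cdot\E[\exp\{i\sum_j\lambda_j\sigma_2 W(t_j)\}]$, which is exactly the joint characteristic function of $(V^0,Z)$ with $V^0$ and $Z=\sigma_2 W$ independent (recall $\E[\exp\{i\sum_j\theta_j V^0(t_j,r_j)\}]=e^{-\sigma(\thvec)^2/2}$, as $V^0$ is mean-zero Gaussian with covariance $\Gamma$). This is the asserted finite-dimensional convergence.

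The main obstacle is the conditional CLT of the middle step, and within it the identification of the limiting covariance \emph{uniformly in the environment}: showing that for $P$-a.e.\ $\w$ the $\w$-dependent Riemann sums converge to the $\Gamma$-integrals requires the quenched CLT for the walk to hold uniformly over the starting sites $|m|\le a(n)\sqrt n$, the bound $\sup_{|m|\le a(n)\sqrt n}|Z_{nt}(\theta^m\w)-Z_{nt}(\w)|=o(\sqrt n)$, and ergodic-theorem substitutions of $E_P$-means for spatial averages of $E_\w\eta_0(0)$ and $\Var_\w\eta_0(0)$. These are exactly the technical estimates already in place for Theorem \ref{findimthm}; here they are merely reused, with the bookkeeping arranged so the $Z_{nt}(\w)$ in the threshold of $Y_n^{(q)}$ cancels instead of surviving. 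The sign and floor/rounding discrepancies between \eqref{defYnq} and \eqref{defYn} are routine and contribute only $o(1)$.
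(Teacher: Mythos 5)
The paper explicitly \emph{omits} the proof of this theorem (and of Theorem~\ref{findimthm}), remarking only that the argument follows the pattern of the classical-walk case in Section~\ref{rwpfsec} with technical modifications for the random environment. Your reconstruction is sound and is the natural argument implied by that remark: condition on $\w$ so that $Z_{n\cdot}(\w)$ is frozen and the walks are conditionally independent; run a quenched Lindeberg--Feller computation in which the built-in $-Z_{nt}(\w)$ shift in the threshold of $Y_n^{(q)}$ cancels against the centering $Z_{nt}(\theta^m\w)$ of the quenched CLT (up to the uniform $o(\sqrt n)$ fluctuation of $Z_{nt}(\theta^m\w)-Z_{nt}(\w)$ over $|m|\le a(n)\sqrt n$), so the conditional variance converges to the deterministic quantity $\sigma(\thvec)^2=\sum_{i,j}\theta_i\theta_j\Gamma((t_i,r_i),(t_j,r_j))$ after the ergodic replacements $E_\w\eta_0(0)\to\emean$, $\Var_\w\eta_0(0)\to\sigma_0^2$; and then integrate the resulting constant conditional characteristic function against the law of $(n^{-1/2}Z_{nt_j}(\w))_j$, whose limit is supplied by Theorem~\ref{QCLTthm}(3). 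The final factorization of the joint characteristic function is exactly the assertion that $V^0$ and $Z$ are independent.

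One caution worth keeping explicit: the convergence $G_n(\w)\to e^{-\sigma(\thvec)^2/2}$ is in $P$-probability rather than $P$-a.s.\ (the choice of a slowly growing $a(n)$, the sup-norm form of the quenched CLT, and the bound $\sup_{|m|\le a(n)\sqrt n}|Z_{nt}(\theta^m\w)-Z_{nt}(\w)|=o(\sqrt n)$ are all probability-in-$\w$ statements, exactly as in the paper's sketch of Theorem~\ref{QMCurrent}). This does not affect the conclusion, since boundedness of $G_n$ and convergence in $P$-probability still give $E_P[|G_n-e^{-\sigma(\thvec)^2/2}|]\to 0$. A cosmetic point: the Gaussian approximation to the quenched probability should carry the time in the variance subscript, $\Phi_{\sigma_1^2 t}(r-m/\sqrt n)$ rather than $\Phi_{\sigma_1^2}$, to match the paper's notation \eqref{gauss}.
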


\subsection{Sketch  of the proof for the quenched mean of the current} 

The  basic thrust of the  proof of 
    Theorem \ref{findimthm} for the limit of the centered current is similar to the
one  outlined in Section \ref{rwpfsec} for classical random walks.   The differences lie in
the technical details needed to handle the random environment.  So we omit further
discussion of  that theorem and of the related  Theorem \ref{findimthmVq}.  
In this section 
we explain the main ideas behind the proof of Theorem  \ref{QMCurrent} for
the quenched mean of the current.   To put aside  inessential detail   
 we drop the uniformity, fix $(t,r)$,  and sketch informally the argument for the
 following simplified  statement:

 \begin{proposition} \label{QMa}
For any $\e>0$,  
\be  \lim_{n\to\infty}  P\Bigl\{ \w :    \,
\bigl\lvert E_\w Y_n(t,r) +\emean r \sqrt n + \emean Z_{nt}(\w)  \bigr\rvert \ge \e\sqrt n\, \Bigr\} =0.  
\label{limYZa}\ee
   \end{proposition}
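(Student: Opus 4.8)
The plan is to compute $E_\w Y_n(t,r)$ explicitly as a sum of quenched probabilities and compare it term by term with the quantity $-\emean r\sqrt n - \emean Z_{nt}(\w)$, reducing everything to a statement about the quenched mean $E_\w(X_{\fl{nt}})$ of a single RWRE walk. First I would use Assumption 2 to write
\[
 E_\w Y_n(t,r) = \emean\sum_{m\le 0} P_\w\bigl(X^{m}_{\fl{nt}} > \fl{nt\vp}+r\sqrt n\bigr) - \emean\sum_{m> 0} P_\w\bigl(X^{m}_{\fl{nt}} \le \fl{nt\vp}+r\sqrt n\bigr),
\]
where I have pulled $E_\w(\eta_m(0))$ out but must be careful that this equals $\emean$ only after averaging over $P$; in the quenched statement it is $E_{\theta^m\w}(\eta_0(0))$, so I would split each summand into $\emean$ plus a mean-zero (under $P$) fluctuation $E_{\theta^m\w}(\eta_0(0))-\emean$ and show the fluctuation part contributes $o(\sqrt n)$ in $P$-probability by a second-moment estimate using \eqref{vpmomass1} and the independence of the $\eta$-fluctuations from the walks. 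For the main $\emean$-term, I would use the spatial homogeneity of the environment law — $X^m_\cdot$ under $P_\w$ has the same law as $m + X^0_\cdot$ under $P_{\theta^m\w}$ — to rewrite the double sum, and then telescope: the sum over $m$ of indicator-type probabilities collapses, up to boundary terms of size $O(\sqrt n)$ that are controlled by the CLT scale, to $\emean\, E_\w(X_{\fl{nt}}) - \emean\fl{nt\vp} - \emean r\sqrt n$ plus an error. This is the discrete analogue of the identity $\sum_{m}(\mathbf 1\{X^m > a\} - \mathbf 1\{m>0\}) = X^0 - a$ that was used implicitly for classical walks.

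The upshot of this bookkeeping is that \eqref{limYZa} reduces to the single-walk statement
\[
 \lim_{n\to\infty} P\Bigl\{\w : \bigl\lvert E_\w(X_{\fl{nt}}) - \fl{nt\vp} + Z_{\fl{nt}}(\w)\bigr\rvert \ge \e\sqrt n\Bigr\} = 0,
\]
which is exactly the content of \eqref{limqmeanZ} from Theorem \ref{QCLTthm} (after replacing $\fl{nt}$ by the comparable index and using that $Z_{\fl{nt}}(\w)$ and $Z_{nt}(\w)$ differ by $o(\sqrt n)$, which follows from the stationarity and the central limit scaling for the increments of $Z$). So the heavy analytic input is quoted rather than reproved; what remains is the combinatorial rearrangement and the error estimates.

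The main obstacle I expect is controlling the boundary and truncation errors in the telescoping step with the right uniformity. The sum $\sum_{m\le 0} P_\w(X^m_{\fl{nt}} > \fl{nt\vp}+r\sqrt n)$ is genuinely infinite, and for $m$ very negative the probability that the walk has travelled distance $\sim \fl{nt\vp}+|m|$ in time $\fl{nt}$ decays, but under a random environment one does not have clean Gaussian tails — one must invoke the large-deviation / hitting-time estimates for RWRE under Assumption 1 (uniform ellipticity plus $E_P\rho_0^{2+\e_0}<1$) to get a summable, $\w$-uniform-in-$P$-probability bound. A parallel issue arises in bounding the $\eta$-fluctuation sum, where the number of terms of order $\sqrt n$ interacts with the $(2+\e_0)$-moment assumption; here I would use a maximal inequality or simply Chebyshev at the $(2+\e_0)$ level to kill the sum. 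Once those tail bounds are in hand the rest is routine, and the informal sketch in the paper can be expanded along exactly these lines.
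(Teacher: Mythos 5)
Your opening and closing steps align with the paper's: the explicit expansion of $E_\w Y_n(t,r)$ into quenched probabilities, the tail cutoff $|m|\le a(n)\sqrt n$, and a moment/ergodic argument to replace $E_\w(\eta_0(m))$ by $\emean$ (the paper uses the ergodic theorem; your second-moment Chebyshev under \eqref{vpmomass1} is a reasonable variant). The gap is in the ``telescope.'' After the spatial-homogeneity rewrite you are summing $P_{\theta^m\w}(X^0_N > a-m) - \indd{m>0}$ over $m$, and the layer-cake identity (your $\sum_m(\indd{X^m>a} - \indd{m>0}) = X^0 - a$) is valid only when the quenched law of $X^m_N - m$ is the same for every $m$, i.e., when all summands carry a common measure $P_\w$. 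That holds for a homogeneous classical walk. In RWRE the measures $P_{\theta^m\w}$ genuinely differ: the quenched centering for $X^0_N$ under $P_{\theta^m\w}$ is $N\vp - Z_N(\theta^m\w)$, and $Z_N(\theta^m\w) - Z_N(\w)$ is a partial sum of $|m|$ stationary centered random variables, hence of order $\sqrt{|m|}$, not $O(1)$. This is not a ``boundary term controlled by the CLT scale'' — it is precisely the contribution of the random environment to the $m$-sum, and without removing it the telescope simply does not close. The paper's step from $\wt W_n$ to $\wh W_n$ (estimate \eqref{re:2}) is the needed uniformization: first Gaussianize each quenched probability via the quenched CLT, writing it as $\Phi_{\s_1^2 t}\bigl(\pm(Z_{nt}(\theta^m\w)-m)/\sqrt n \mp r\bigr)$, then use the stationarity of the $Z$-increments to replace $Z_{nt}(\theta^m\w)$ by $Z_{nt}(\w)$ uniformly over $|m|\le a(n)\sqrt n$. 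Only after that replacement does the remaining expression collapse to $\emean\,[E_\w(X_N) - a] + o(\sqrt n)$, and then \eqref{limqmeanZ} finishes things off exactly as you propose.

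So the overall outline is salvageable, but \eqref{limqmeanZ} is not the sole analytic input: you also need part (2) of Theorem~\ref{QCLTthm} (the quenched CLT, to Gaussianize the individual terms) and the estimate $\sup_{|m|\le a(n)\sqrt n}|Z_{nt}(\theta^m\w)-Z_{nt}(\w)| = o(\sqrt n)$ in $P$-probability (to uniformize the environment shifts). These are the core of the paper's proof and must appear as explicit, quantified steps rather than being folded into ``boundary terms.''
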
 

This proceeds via a sequence of estimations.  Abbreviate the centered quenched
mean by 
\begin{align*}   W_n&= E_\w Y_n(t,r) +\emean r \sqrt n\\
&=  \sum_{m\leq 0}E_\w[ \eta_0(m) ] P_\w( X^m_{nt} >  nt\vp + r\sqrt{n}\, )\\
&\qquad\qquad - \;  \sum_{m> 0} E_\w[ \eta_0(m) ] P_\w( X^m_{nt} \leq nt\vp + r\sqrt{n}\, ) 
+ \dc r\sqrt{n}. \nn
\end{align*}
For a suitable sequence $a(n)\nearrow\infty$  define 
\begin{align*}
\wt{W}_{n}&=  \sum_{m=-\fl{a(n)\sqrt{n}}+1}^0 \!\!\!  E_\w( \eta_0(m)) \Phi_{\s_1^2 t}\left( - \frac{Z_{nt}(\theta^m\w)-m}{\sqrt{n}} - r \right)\\
&\qquad\qquad   -\; \sum_{m=1}^{\fl{ a(n)\sqrt{n}}} E_\w( \eta_0(m)) \Phi_{\s_1^2 t}\left( \frac{Z_{nt}(\theta^m\w)-m}{\sqrt{n}} + r \right)  
  + \emean r \sqrt{n}.
\end{align*}
The quenched CLT (part 2 of Theorem \ref{QCLTthm}) implies that 
the difference 
\[
  \sup_{x\in\R} \,\biggl\lvert  P_\w\left( \frac{X_n - n\vp + Z_n(\w)}{\sqrt{n}} \leq x \right) - \Phi_{\s_1^2}(x) \biggr\rvert 
\]
vanishes $P$-a.s.\ as $n\to\infty$.  Thus it is possible to choose $a(n)\nearrow\infty$ 
slowly enough so that 
\be 
\lim_{n\to\infty} P\left(   \frac{1}{\sqrt{n}}\left|W_{n} - \wt{W}_{n}\right| \geq \e\right) = 0.
\label{re:1} \ee
 
The next step is to remove the shifts from $Z_{nt}$  by defining
\begin{align*}
\wh{W}_{n}&=  \sum_{m=-\fl{a(n)\sqrt{n}}+1}^0 \!\!\!  E_\w( \eta_0(m)) \Phi_{\s_1^2 t}
\left( - \frac{Z_{nt}(\w)-m}{\sqrt{n}} - r \right)\\
&\qquad\qquad   -\; \sum_{m=1}^{\fl{ a(n)\sqrt{n}}} E_\w( \eta_0(m)) \Phi_{\s_1^2 t}\left( \frac{Z_{nt}(\w)-m}{\sqrt{n}} + r \right)  
  + \emean r \sqrt{n}.
\end{align*}
The estimation 
\be 
\lim_{n\to\infty} P\left(   \frac{1}{\sqrt{n}}\left|\wt W_{n} - \wh{W}_{n}\right| \geq \e\right) = 0
\label{re:2} \ee  
follows from   representation \eqref{Znt} of $Z_{nt}$ as a sum of ergodic terms whose
behavior is well understood.    

Subsequently the ergodicity of the environment allows us to average the
quenched means $E_\w( \eta_0(m))$ and replace $\wh W_n$ with 
\begin{align*}
\wb{W}_{n}&=  \sum_{m=-\fl{a(n)\sqrt{n}}+1}^0 \!\!\!  \emean \Phi_{\s_1^2 t}
\left( - \frac{Z_{nt}(\w)-m}{\sqrt{n}} - r \right)\\
&\qquad\qquad\qquad  -\; \sum_{m=1}^{\fl{ a(n)\sqrt{n}}} \emean \Phi_{\s_1^2 t}\left( \frac{Z_{nt}(\w)-m}{\sqrt{n}} + r \right)  
  + \emean r \sqrt{n}.
\end{align*}

After this, replace the sums with integrals to approximate  $n^{-1/2}\wb W_n$ with 
\[  \dc r \; - \; \dc \int_0^{a(n)} \Phi_{\s_1^2 t}\left( \frac{Z_{nt}(\w)}{\sqrt{n}} + r -x \right) - \Phi_{\s_1^2 t}\left( - \frac{Z_{nt}(\w)}{\sqrt{n}} - r -x \right) dx.  \]
A calculus exercise  shows that  
\[
\int_0^{A} \Phi_{\alpha^2}\left( z -x \right) - \Phi_{\alpha^2}\left( -z  -x \right) dx = z + \Psi_{\alpha^2}(A+z) - \Psi_{\alpha^2}(A-z),  
\]
where $\Psi_{\alpha^2}(x)$ is again the function  defined in \eqref{Psi}. 
Therefore,
\begin{align*}
&\int_0^{a(n)} \Phi_{\s_1^2 t}\left( \frac{Z_{nt}(\w)}{\sqrt{n}} +r -x \right) - \Phi_{\s_1^2 t}\left( - \frac{Z_{nt}(\w)}{\sqrt{n}} - r -x \right) dx \\[4pt]
&\qquad = \frac{Z_{nt}(\w)}{\sqrt{n}} + r + \Psi_{\s_1^2 t}\left(a(n)+\frac{Z_{nt}(\w)}{\sqrt{n}} + r \right)\\[4pt]
&\qquad\qquad\qquad
 - \Psi_{\s_1^2 t}\left(a(n)-\frac{Z_{nt}(\w)}{\sqrt{n}} - r \right).
\end{align*}
The last two terms vanish as $n\to\infty$ because $\Psi_{\s_1^2 t}(\infty)=0$ and 
$n^{-1/2}Z_{nt}$ is tight by part 3 of Theorem \ref{QCLTthm}.  

Collecting the steps (and taking the omitted details on faith) leads to the estimate 
\[   W_n = -\emean Z_{nt}  + o(\sqrt n\,)  \qquad\text{in probability} \]
as was claimed in \eqref{limYZa}.

\subsection*{References}
The results for the current of RWRE's is from \cite{pete-sepp}.
 Zeitouni's lecture notes \cite{zeit-stflour}  
 are a  standard reference for background on RWRE.

\section{Random average process} 
\setcounter{equation}{0}
\subsection{Model and results} 

The state of the  random average process (RAP) 
is a height function  $\si: \Zb\to\Rb$  where the value $\si(i)$ can be thought of as 
the height of an interface above site $i$. 
The state evolves  in discrete time  according to the following rule. 
At each time
point $ s=1,2,3,\dotsc$ and 
at each site $k\in\Zb$, a  random  probability vector 
$\om_{k,s}=(\om_{k,s}(j): -R\leq j\leq R )$  of length $2R+1$ is drawn. 
Given the state $\si_{{s}-1}=(\si_{{s}-1}(i)\,:\,i\in\Zb)$ 
at time ${s}-1$,  
at time ${s}$  the height value at site $k$ is  updated to 
 \be
\si_{s}(k)=\sum_{j: \abs{j}\leq R}\om_{k,s}(j) \si_{{s}-1}(k+j).
\label{si-dyn-1}
\ee
This update is performed independently at each site $k$ to form
the state $\si_{s}=(\si_{s}(k)\,:\,k\in\Zb)$ at time ${s}$. 
The weight vectors $\{\om_{k,s}\}_{k\in\bZ, s\in\bN}$ are i.i.d.\ across 
space-time points $(k,s)$. 
This system was originally studied by Ferrari and Fontes \cite{ferr-font-rap}. 

Let \[p(0,j)=\bE[ \om_{0,0}(j)]\] denote the averaged weights with
mean and  variance  
\be
V=\sum_x x\,p(x)  \quad\text{and}\quad 
\s_1^2=\sum_{x\in\Zb} (x-V)^2\; p(0,x).
\label{def-si_a}
\ee
Let $b=-V$.  

Make two nondegeneracy  assumptions on the distribution of the weight vectors.  

(i) There is no 
integer $h>1$ such that, for some $x\in\Zb$, 
\[
\sum_{k\in\Zb} p(0, x+kh)=1.
\]
This is also expressed by saying that the {\sl span}
 of the random walk with  
jump probabilities $p(0,j)$  is 1 \cite[page 129]{durr}.
It follows that the additive group generated by $\{x\in\Zb: p(0,x)>0\}$ 
is all of $\Zb$, in other words this walk is {\sl aperiodic} in 
Spitzer's terminology \cite{spitzer}. 

(ii) Second, we assume that
\be
\bP\{ \max_j \om_{0,0}(j) < 1\}>0. 
\label{ellipt}
\ee

Let $\si_{s}$ 
be a random average process normalized by $\si_0(0)=0$
and whose initial increments $\{\eta_i(0)=\si_0(i)-\si_0(i-1):i\in\Zb\}$ 
are i.i.d.~such that 
\be
\text{there exists $\alpha>0$ 
such that } \quad
  E[\,\lvert\eta_i(0)\rvert^{2+\alpha} \,]
<\infty. 
\label{eta-ass-p}
\ee
As before, the mean and variance of initial increments are 
\[  \emean=E(\eta_i(0))  
\quad\text{and}\quad 
\evar^2=\Var(\eta_i(0)). \]  
The initial increments $\eta_0$ are independent of the weight vectors $\{\om_{k,s}\}$. 


Again we study a suitably scaled  process of fluctuations in the characteristic 
direction:  for $(t,r)\in\Rb_+\times\Rb$,  let
\[
\Ybar_n(t,r)=n^{-1/4}
\bigl\{\si_{\fl{nt}}^n( \fl{r\sqrt{n}\,}+\fl{ntb})
-\emean r\sqrt n\,\bigr\}.  
\]
  In terms of the increment 
process \[ \eta_i(s)=\si_{s}(i)-\si_{s}(i-1), \]
$\Ybar_n(t,r)$ 
is the centered and scaled net flow  from right to left across the  
path $s\mapsto \fl{r\sqrt{n}\,} + \fl{nsb}$,
during the time interval $0\leq s\leq t$, exactly as for independent particles. 

Recall the definitions \eqref{Ga1} and \eqref{Ga2} of the functions $\Gamma_1$ and $\Gamma_2$. 

\begin{theorem} 
 Under the above assumptions  
the finite-dimensional distributions 
of the  process $\{\Ybar_n(t,r): (t,r)\in\Rb_+\times\Rb \}$ converge weakly 
as $n\to\infty$ 
to the finite-dimensional distributions 
of the   mean zero Gaussian
process $\{Z(t,r): (t,r)\in\Rb_+\times\Rb \}$ specified by the covariance 
\be
\begin{split}
\mE Z(s,q)Z(t,r)&= 
\emean^2 \kappa\,\Gamma_1((s,q),(t,r))
+ \evar^2 \Gamma_2((s,q),(t,r)).
\end{split}
\label{rap-cov}
\ee
\label{rap-thm-1}
\end{theorem}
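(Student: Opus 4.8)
The plan is to follow the same strategy as for independent random walks in Section \ref{rwpfsec}: fix finitely many space-time points and an $N$-vector $\thvec$, form the linear combination $\Ybar_n(\thvec)=\sum_i\theta_i\Ybar_n(t_i,r_i)$, and prove that its characteristic function converges to $\exp(-\sigma(\thvec)^2/2)$ where $\sigma(\thvec)^2$ is built from the covariance \eqref{rap-cov}. The crucial structural fact about the RAP is the duality between the height process and a backward random walk in the random environment of weight vectors. Writing out the iteration \eqref{si-dyn-1}, one gets
\[
\si_{\fl{nt}}(y)=\sum_{x\in\Zb}\si_0(x)\,K_{\fl{nt}}(y,x),
\]
where $K_{\fl{nt}}(y,\cdot)$ is the distribution at time $\fl{nt}$ of a random walk started at $y$ that at each backward time step $s$ jumps according to the weights $\om_{\cdot,s}$. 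Conditionally on the environment $\underline\om=\{\om_{k,s}\}$ these are genuine random walks; summing over the i.i.d.\ initial increments $\eta_i(0)$ turns $\si_{\fl{nt}}(y)-\emean y$ into a sum over $i$ of independent contributions, one from the (quenched) mean flow through the environment and one from the centered increments, exactly parallel to \eqref{Udef} but now with the deterministic kernel $P(X_{\fl{nt}}\le\cdot)$ replaced by the random kernel $K_{\fl{nt}}$.

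First I would split $\Ybar_n(\thvec)$ into a \emph{quenched-mean part} $M_n$ coming from $\emean\,\bE[K_{\fl{nt}}(y,x)\mid\underline\om]$-type terms and a \emph{centered-increment part} $G_n$ coming from $\eta_i(0)-\emean$, and argue these are asymptotically independent Gaussians whose variances produce the $\emean^2\kappa\,\Gamma_1$ and $\evar^2\Gamma_2$ terms of \eqref{rap-cov}, respectively. For $G_n$ the argument is essentially that of Section \ref{rwpfsec}: condition on $\underline\om$, apply Lindeberg–Feller to the sum over $i$ of the independent terms $(\eta_i(0)-\emean)\times(\text{quenched kernel evaluated at characteristic window})$, use the quenched CLT for the backward walk to replace the kernel by a Gaussian density, and check that the resulting conditional variance converges (in probability over $\underline\om$) to the $\Gamma_2$ expression — the environment washes out by ergodicity of $\{\om_{\cdot,s}\}$, and the moment assumption \eqref{eta-ass-p} furnishes the negligibility condition. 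The truncation $|i|\le a(n)\sqn$ and Lemma-\ref{Slemma}-type $L^2$ bound go through verbatim.

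The genuinely new and hardest part is the fluctuation of the quenched mean $M_n$, i.e.\ the appearance of the extra factor $\kappa$ (the weight-vector variance parameter) in front of $\Gamma_1$. Here the deterministic heat-kernel of the classical-walk case is replaced by a \emph{random} kernel, and its fluctuations about its annealed mean $p^{*\fl{nt}}$ are what survive on the $n^{1/4}$ scale. The approach is to linearize: write $K_{\fl{nt}}-p^{*\fl{nt}}$ as a telescoping sum over time steps $s=1,\dots,\fl{nt}$ of increments of the form (forward kernel up to time $s$) $\times$ (one-step environment fluctuation $\om_{\cdot,s}-p$) $\times$ (annealed kernel from $s$ to $\fl{nt}$), which is a martingale in $s$. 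Computing its variance produces a double convolution of heat kernels that, after the diffusive rescaling and a Riemann-sum/local-CLT argument, converges to the integral representation \eqref{Ga1b} of $\Gamma_1$, with the prefactor $\emean^2$ from the squared initial density and $\kappa=\sum_j(\text{second-moment structure of }\om_{0,0})$ emerging as the per-step noise intensity — this is the RAP analogue of the space-time white-noise term in \eqref{Zdef}. One must also show the martingale satisfies a Lindeberg condition (using assumption \eqref{ellipt} and finite range $R$ to control individual steps) and that the cross-covariance between $M_n$ and $G_n$ vanishes (immediate, since $G_n$ is centered given $\underline\om$ while $M_n$ is $\underline\om$-measurable). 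Aperiodicity/span-$1$ assumption (i) is what makes the local CLT available so that the Riemann sums converge to the stated integrals; I expect the careful bookkeeping of this martingale variance computation, and proving the joint CLT for $(M_n,G_n)$ rather than each marginally, to be the main obstacle.
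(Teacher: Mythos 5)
Your decomposition matches the paper exactly: the backward-walk (or dual/kernel) representation \eqref{eq:conn}, the split into a quenched-mean piece $\emean H_n$ and a centered-increment piece $S_n$, the asymptotic independence argument via conditional characteristic functions (the paper packages this as Lemma \ref{main-lm}, which is precisely your ``$G_n$ is centered given $\underline\om$ while $M_n$ is $\om$-measurable'' observation turned into a limit statement), and the conditional Lindeberg--Feller plus quenched CLT treatment of $S_n$ yielding the $\evar^2\Gamma_2$ term. The martingale decomposition of $E^\om(X_n)$ into increments $E^\om g(T_{\Xbar_k}\om)$ with $g(\om)=E^\om(X_1)-V$, equation \eqref{rw-calc-1}, is also the version in the paper.

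The gap is in your variance computation for the quenched-mean martingale, and it is where the constant $\kappa$ actually comes from. When you square the martingale and take the annealed expectation for two walks started near $q\sqrt n$ and $r\sqrt n$, you arrive at
\[
\E\bigl[\Hbar_n(t,q)\Hbar_n(t,r)\bigr]
= \frac{\sigma_D^2}{\sqrt n}\sum_{k=0}^{\fl{nt}-1}\E\,P^\om\bigl(X^{(1)}_k=X^{(2)}_k\bigr),
\]
i.e.\ the Green's function of the \emph{difference walk} $Y_k=X^{(2)}_k-X^{(1)}_k$, not a double convolution of annealed heat kernels. The difference walk is a random walk whose transition is \emph{perturbed at the origin}: when the two walks coincide they share the next weight vector, so $q(0,\cdot)=\sum_z\E[\om_{0,0}(z)\om_{0,0}(z+\cdot)]$ involves a second moment, whereas $q(x,\cdot)=\bar q(x,\cdot)=\sum_z p(0,z)p(0,z+\cdot-x)$ for $x\ne 0$ factorizes. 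If you treat the two quenched kernels as though they annealed independently, you get the unperturbed Green's function $\bar G_n$ and hence $\kappa=\sigma_D^2/\sigma_1^2$, which is wrong. The correct constant is $\kappa=\sigma_D^2/(\beta\sigma_1^2)$ with $\beta=\sum_x q(0,x)\abar(x)$, where $\abar$ is the potential kernel of the unperturbed walk \eqref{defabar}, and the relation $\beta G_n\approx\bar G_n$ is a genuine lemma (see \eqref{G-lim1}) that uses the identity $\beta G_{n-1}(0,0)=E_0[\abar(Y_n)]$, the CLT for $Y_n$, and a first-passage decomposition to extend from $z=0$ to all starting points. Your phrases ``double convolution of heat kernels'' and ``$\kappa$ as per-step noise intensity'' suggest you would skip this perturbed-chain analysis, which is the actual crux. (Separately, the paper needs a further step to control covariances across distinct times $s\ne t$, done by stopping the backward walk at the intermediate level $\fl{ns}$ and using independence of environment layers; your sketch is silent on this, but it is more routine than the $\beta$ issue.)
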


The constant $\kappa$ is determined by the distribution  of the random weights and will
be described precisely later in equation \eqref{def-kappa}.  

Invariant distributions for the general RAP are not known.   The next example may be
the only one where explicit invariant distributions are available. 

\begin{example} Fix positive real parameters $\theta>\alpha>0$. 
Let $\{\om_{k,s}(-1): {s}\in\Nb, k\in\Zb\}$ 
be i.i.d.~ Beta($\alpha, \theta-\alpha$) random variables
with density  
\[
h(u)=\frac{\Gamma(\theta)}{\Gamma(\alpha)\Gamma(\theta-\alpha) }u^{\alpha-1}(1-u)^{\theta-\alpha-1}
\]
on $(0,1)$. Set $\om_{k,s}(0)=1-\om_{k,s}(-1)$. Thus the weights are supported
on $\{-1,0\}$.  
A family of  invariant distributions for the
increment  process 
$\eta(s)=(\eta_{k}(s): k\in\Zb)$ is obtained by
letting the variables $\{\eta_k: k\in\Zb\}$ be i.i.d.~Gamma($\theta, \lambda$) 
distributed with common density 
\be
f(x)=\frac1{\Gamma(\theta)} \lambda e^{-\lambda x}(\lambda x)^{\theta-1}
\label{gamma-eq}
\ee
on $\Rb_+$. This family of invariant distributions is parametrized
by $0<\lambda<\infty$. Under this  distribution 
$
E^\lambda [\eta_k]= {\theta}/\lambda 
$ and  
$\Var^\lambda[\eta_k]={\theta}/{\lambda^2} . 
$
\label{example-1}
In this situation we find again the  fractional Brownian motion limit:
 \be
\mE Z(s,0)Z(t,0)=  
c_1\bigl( \sqrt{s}+\sqrt{t}-\sqrt{\abs{t-s}}\,\bigr).
\label{fBM-1}
\ee
for a certain constant $c_1$.  
 \end{example}

\subsection{Steps of the proof}  

\noindent 
{\bf 1. Representation in terms of space-time RWRE}

\medskip

Let  
$\om=( \om_{k,s}: {s}\in\Nb,\, k\in\Zb)$ represent the i.i.d.\ random weight vectors
that determine the dynamics, coming from a probability space $(\Omega, \kS, \bP)$.  
Given $\om$ and a space-time point $(i,\tau)$, let $\{X^{i,\tau}_s: s\in \Zb_+\}$ denote a random
walk on $\Zb$ that starts at $X^{i,\tau}_0=i$ and whose
transition probabilities are given by 
\be
P^\om(X_{s+1}^{i,\,\tau}=y\,|\,X^{i,\,\tau}_s=x)=\om_{x,\tau-s}(y-x).
\label{X-tr-pr}
\ee
$P^\om$ is the path measure of the walk $X^{i,\tau}_s$, with
expectation denoted by $E^\om$.  Comparison of 
 \eqref{si-dyn-1}  and \eqref{X-tr-pr} gives
 \be
\si_{s}(i)=\sum_j 
P^\om(X_{1}^{i,\,{s}}=j\,|\,X^{i,\,{s}}_0=i)  \si_{{s}-1}(j)
=E^\om\bigl[ \si_{{s}-1}(X_{1}^{i,\,{s}})\bigr].
\label{si-dyn-2}
\ee
Iteration and the Markov property of the walks $X^{i,{s}}_s$  then lead to 
\be
\si_{s}(i)=E^\om\bigl[\si_0(X^{i,\,{s}}_{s})\bigr].
\label{eq:conn}
\ee
Note that the initial height function $\sigma_0$ is a constant 
under the expectation $E^\om$. 

Let us add another coordinate to keep track of time and write
$\Xbar^{i,\tau}_s=(X^{i,\tau}_s, \tau-s)$ for $s\geq 0$.  Then 
$\Xbar^{i,\tau}_s$ is a random walk on the planar lattice 
$\Zb^2$ that always moves down one step in the $e_2$-direction,
and if its current position is $(x,n)$,  then 
  its next position is $(x+y,n-1)$ with probability 
 $\om_{x,n}(y-x)$.  We could call this a backward random
walk in a (space-time, or dynamical)  random environment. 

The opening step of the proof is to use the random walk representation
to rewrite the random variable $\Ybar_n(t,r)$ in 
a manner that allows us to separate the effects of the random initial
conditions from the effects of the random weights.  
Abbreviate 
\[
y(n)=\fl{ntb}+\fl{r\sqrt{n}\,}.
\]
and recall   $\emean=E \eta_i(0)$ and $\si_0(0)=0$. 
\begin{align*}
&\si_{{\fl{nt}}}(y(n)) =
E^\om\bigl[\si_0( X^{y(n),\, \fl{nt}}_{\fl{nt}}) \bigr]\\
&=E^\om\biggl[{\bf1}_{\left\{ X^{y(n),\, \fl{nt}}_{\fl{nt}}>0\right\}}
\sum_{i= 1}^{ X^{y(n),\, \fl{nt}}_{\fl{nt}}}\eta_i(0)
\\  &\qquad\qquad\qquad\qquad
\; - \; {\bf1}_{\left\{ X^{y(n),\, \fl{nt}}_{\fl{nt}}<0\right\}}
\sum_{i= X^{y(n),\, \fl{nt}}_{\fl{nt}}+1}^{0}\eta_i(0)\biggr]\\
 &=\sum_{i>0} \eta_i(0) P^\om\left\{i\leq X^{y(n),\, \fl{nt}}_{\fl{nt}}\right\} \; - \;
  \sum_{i\leq
0}\eta_i(0) P^\om\left\{i> X^{y(n),\, \fl{nt}}_{\fl{nt}}\right\} \\ 
&=
\emean H_n(t,r)+S_n(t,r)
 \end{align*}
 where 
 \begin{align*}
H_n(t,r)&=   \sum_{i\in\Zb} 
\Bigl( \ind\{i>0\} P^\om\bigl\{ i\leq X^{y(n),\, \fl{nt}}_{\fl{nt}}\bigr\}  
\\ &\qquad \qquad  
\;-\; \ind\{i\leq 0\} P^\om\bigl\{ i> X^{y(n),\, \fl{nt}}_{\fl{nt}}\bigr\}
\Bigr)\\
&=    E^\om\bigl( X^{y(n),\, \fl{nt}}_{\fl{nt}} \bigr)\\
\end{align*}
and 
\[
\begin{split}
S_n(t,r)&= \sum_{i\in\Zb}\bigl(\eta_i(0)-\emean\bigr)
\Bigl( \ind\{i>0\} P^\om\bigl\{ i\leq X^{y(n),\, \fl{nt}}_{\fl{nt}}\bigr\}  
\\ &\qquad \qquad 
\; -\; \ind\{i\leq 0\} P^\om\bigl\{ i> X^{y(n),\, \fl{nt}}_{\fl{nt}}\bigr\}
\Bigr). 
\end{split}
\] 

At this point the terms $H_n$ and $S_n$ are dependent,  but in the course
of the scaling limit they become independent and furnish the two independent
pieces that make up the limiting process $Z$.    The limits 
$n^{-1/4}(H_n(t,r)-r\sqrt n)  \limd  H(t,r)$ and
$n^{-1/4}S_n(t,r)  \limd  S(t,r)$ are treated separately, and then together 
\[ \Ybar_n=   n^{-1/4}(H_n-r\sqrt n+S_n)\limd H+S\equiv Z\] 
with  independent terms $H$ and $S$.   This independence comes from
the independence of   the initial height function 
$\si_0$ and the random environment $\om$ that drives the 
dynamics.  The idea   is represented  in the next 
lemma. 
 
\begin{lemma}
Let  $\eta$ and  $\omega$
be  independent random variables  with values in some abstract measurable spaces.
 Let $ h_n(\omega)$  and $ s_n(\omega,\eta)$
be measurable functions of $(\omega,\eta)$.  Let $E^\om(\cdot)$ denote 
conditional expectation, given $\om$. 
Assume the existence of random variables $h$ and $s$ such that  
\begin{itemize}
\item[(i)]  
  $ h_n(\omega)\limd  h$; 
\item[(ii)] for all $\theta\in\Rb$,  
$
E^\om[e^{i\theta  s_n}]\to  E(e^{i\theta  s}) \quad\text{ in  probability 
as $n\to\infty$.}
$
 \end{itemize}
Then  
$ h_n+ s_n\limd h+ s$, 
where $ h$ and $ s$ are independent.
\label{main-lm}
\end{lemma}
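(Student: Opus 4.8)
The plan is to prove the joint convergence $(h_n(\omega), s_n(\omega,\eta)) \limd (h,s)$ with $h,s$ independent, which immediately yields $h_n+s_n\limd h+s$ by the continuous mapping theorem. Since convergence in distribution of $\bR^2$-valued vectors is equivalent to convergence of characteristic functions, I would fix $\theta_1,\theta_2\in\bR$ and study
\[
E\bigl[\exp\{i\theta_1 h_n(\omega) + i\theta_2 s_n(\omega,\eta)\}\bigr].
\]
Conditioning on $\omega$ and using that $s_n$ is a function of $(\omega,\eta)$ while $h_n$ depends on $\omega$ alone, this equals
\[
E\Bigl[\exp\{i\theta_1 h_n(\omega)\}\; E^\omega\bigl[\exp\{i\theta_2 s_n(\omega,\eta)\}\bigr]\Bigr].
\]

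First I would use hypothesis (ii): the inner conditional characteristic function $\phi_n(\omega):=E^\omega[\exp\{i\theta_2 s_n\}]$ converges in probability to the constant $E[\exp\{i\theta_2 s\}]=:\psi$. Write the target expectation as
\[
E\bigl[e^{i\theta_1 h_n}\,\psi\bigr] + E\bigl[e^{i\theta_1 h_n}(\phi_n(\omega)-\psi)\bigr].
\]
The second term is bounded in absolute value by $E\bigl[\,|\phi_n(\omega)-\psi|\,\bigr]$, which tends to $0$: indeed $|\phi_n(\omega)-\psi|\le 2$ is uniformly bounded, so convergence in probability upgrades to convergence in $L^1$ by bounded convergence. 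For the first term, $\psi$ is a deterministic constant, so it equals $\psi\,E[e^{i\theta_1 h_n}]$, and by hypothesis (i) this converges to $\psi\,E[e^{i\theta_1 h}]=E[e^{i\theta_1 h}]\,E[e^{i\theta_2 s}]$. Hence
\[
E\bigl[\exp\{i\theta_1 h_n + i\theta_2 s_n\}\bigr]\;\longrightarrow\; E[e^{i\theta_1 h}]\,E[e^{i\theta_2 s}],
\]
which is precisely the characteristic function of a pair $(h,s)$ of independent random variables with the prescribed marginals; in particular it is a genuine characteristic function, so the limiting pair exists and has independent coordinates.

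The only genuinely delicate point is the passage from "in probability" to "in $L^1$" for $\phi_n(\omega)-\psi$, and there the bound $|\phi_n(\omega)|\le 1$ (a conditional characteristic function is bounded by $1$) together with $|\psi|\le 1$ makes the integrand dominated by the constant $2$, so dominated/bounded convergence applies with no further assumptions — this is why no uniform integrability hypothesis is needed. I expect no real obstacle beyond bookkeeping; the lemma is essentially the standard fact that conditional independence plus convergence of one factor to a constant forces asymptotic independence. Finally, taking $\theta_1=\theta_2=\theta$ and applying addition (continuous) gives $h_n+s_n\limd h+s$ as claimed.
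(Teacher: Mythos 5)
Your proposal is correct and takes essentially the same route as the paper: both condition on $\omega$ to write the joint characteristic function as $E\bigl[e^{i\theta_1 h_n}\,E^\omega e^{i\theta_2 s_n}\bigr]$, split off the constant $E[e^{i\theta_2 s}]$, handle the remainder by hypothesis (ii) plus bounded convergence (using $|\cdot|\le 2$), and handle the leading term by hypothesis (i). The only cosmetic difference is that you phrase it as proving joint convergence of the pair $(h_n,s_n)$ and then invoke continuous mapping for the sum, whereas the paper presents it directly as a triangle-inequality bound on the characteristic-function difference; the underlying decomposition and estimates are identical.
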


\begin{proof} 
 Let $\theta,\lambda\in\bbR$. Then
\begin{align*}
&\left\vert E\bigl( E^\om[e^{i\lambda  h_n+i\theta  s_n}]\bigr) 
- E[e^{i\lambda  h}]\, E[e^{i\theta  s}]\right\vert\\
&\qquad\leq
\left\vert E\left[e^{i\lambda  h_n}
\left( E^\omega e^{i\theta  s_n} - E e^{i\theta  s}\right)\right]
\right\vert  +
\left\vert 
\left(E e^{i\lambda  h_n} - E e^{i\lambda  h}\right)
E e^{i\theta  s}
\right\vert\\
&\qquad\leq  \left\vert E\left[e^{i\lambda  h_n}
\left(E^\omega e^{i\theta  s_n} - E e^{i\theta  s}\right)\right] \right\vert  +
\left\vert E e^{i\lambda  h_n} - E e^{i\lambda  h}
\right\vert.
\end{align*}
By assumption (i), the second term above goes to 0. By assumption (ii), the 
integrand in the first term goes to 0 in  probability. Therefore by
bounded convergence the first term goes to 0 as $n\to\infty$.
\end{proof}

We discuss the term $S_n$   briefly and reserve most of our attention to $H_n$. 
  Two limits combine to give the result. 
The idea is to apply  the   Lindeberg-Feller theorem 
to  $  S_n(t,r) $ under a fixed $\om$.   Then the $\om$-dependent 
coefficients provide no fluctuations but instead converge to Brownian probabilities
due to 
  a quenched central limit theorem for the space-time RWRE.   Here is an 
  informal presentation where we first imagine that the coefficients can be replaced
  by deterministic quantities:  
   \begin{align*}
&S_n(t,r)= \sum_{x\in\Zb}\bigl(\eta_x(0)-\emean\bigr)\\
&\times\biggl(  \ind\{x>0\} 
P^\om\biggl\{  \frac{X^{ \fl{ntb}+\fl{r\sqrt{n}\,}\,,\, \fl{nt}}_{\fl{nt}}-  {r\sqrt{n}\,}}{\sqrt n}
\ge  \frac{x}{\sqrt n} -r  \biggr\}  
\\ &\qquad  
\; -\; \ind\{x\leq 0\} 
P^\om\biggl\{  \frac{X^{ \fl{ntb}+\fl{r\sqrt{n}\,}\,,\, \fl{nt}}_{\fl{nt}}-  {r\sqrt{n}\,}}{\sqrt n}
< \frac{x}{\sqrt n} -r \biggr\}  \,  
\biggr) \\[7pt]
&\approx \sum_{x\in\Zb}\bigl(\eta_x(0)-\emean\bigr)
\biggl(  \ind\{x>0\} 
\mP \Bigl\{ B_{\s_1^2t}   >  \frac{x}{\sqrt n} -r  \Bigr\}  
\\ &\qquad  \qquad  \qquad  
\; -\; \ind\{x\leq 0\} 
\mP \Bigl\{  B_{\s_1^2t} \le  \frac{x}{\sqrt n} -r \Bigr\}  \,  
\biggr) \\
\end{align*} 
Now apply the 
Lindeberg-Feller theorem  to the remaining sum of independent initial occupation
variables, and the limiting covariance comes as:
\begin{align*}
&n^{-1/2} \sum_{i,j} \theta_i\theta_j E[S_n(t_i,r_i) S_n(t_j,r_j)] \\
&\approx \evar^2   \sum_{i,j} \theta_i\theta_j \, 
n^{-1/2} \biggl[ \; \sum_{x>0} \mP \Bigl\{ B_{\s_1^2t_i}   >  \frac{x}{\sqrt n} -r_i  \Bigr\}  
 \mP \Bigl\{ B_{\s_1^2t_j}   >  \frac{x}{\sqrt n} -r_j  \Bigr\} \\
&\qquad\qquad\qquad 
 +   \sum_{x\le 0} \mP \Bigl\{ B_{\s_1^2t_i}   \le \frac{x}{\sqrt n} -r_i  \Bigr\}  
 \mP \Bigl\{ B_{\s_1^2t_j}   \le  \frac{x}{\sqrt n} -r_j  \Bigr\} \; \biggr] \\
&\approx \evar^2   \sum_{i,j} \theta_i\theta_j \, 
  \biggl[ \;\int_0^\infty \mP  \{ B_{\s_1^2t_i}   >  x -r_i   \}  
 \mP  \{ B_{\s_1^2t_j}   >  x -r_j   \}\,dx \\
&\qquad\qquad\qquad 
 +   \int_{-\infty}^0  \mP  \{ B_{\s_1^2t_i}   \le x -r_i   \}  
 \mP  \{ B_{\s_1^2t_j}   \le x -r_j   \}\,dx \; \biggr] \\
&=\evar^2   \sum_{i,j} \theta_i\theta_j \Gamma_2((t_i,r_i),(t_j,r_j)).  
\end{align*} 

Turning this argument rigorous gives  the limit in $\bP$-probability: 
\be \begin{aligned} & \lim_{n\to\infty}  E^\om\bigl[ e^{i \sum_{k}  \theta_k S_n(t_k,r_k)}\bigr]    
=E\bigl[ e^{i \sum_{k}  \theta_k S(t_k,r_k)}\bigr]   \\ 
&\qquad 
= \exp\Bigl\{-\tfrac12  \evar^2   \sum_{i,j} \theta_i\theta_j \Gamma_2((t_i,r_i),(t_j,r_j))\Bigr\}.
\end{aligned}\label{Slim} \ee

In particular,  in the limit the fluctuations of $S$ come from the 
initial occupation variables $\eta_i(0)$ and hence are independent of the
weights $\om$ that determine $H_n$.

 \bigskip
 
\noindent
{\bf 2. Quenched mean of the backward space-time RWRE} 

\medskip

 The remaining piece of the fluctuations comes from  
\be \Hbar_n(t,r) =  n^{-1/4}  
E^\om\bigl( X^{ \fl{ntb}+\fl{r\sqrt{n}\,} ,\, \fl{nt}}_{\fl{nt}} -\fl{r\sqrt{n}\,}\bigr) 
 \label{defHbar}\ee

\begin{theorem} In the sense of convergence of finite-dimensional distributions,
$\Hbar_n\limd H$ where $H(t,r)$ is the mean zero Gaussian process with covariance 
\be\begin{aligned} 
\mE H(s,q)H(t,r) &=  
\kappa \Gamma_1((s,q),(t,r)) \\[3pt] 
&= 
\frac\kappa{2}
\int_{ \s_1^2\lvert t-s\rvert}
^{\s_1^2(t+s)} \frac1{\sqrt{2\pi v}}
\exp\Bigl\{-\frac1{2v}(q-r)^2\Bigr\} \,d v. 
\end{aligned}\label{y-cov}
\ee
\label{qmeanthm}\end{theorem}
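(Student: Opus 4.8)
The idea is to start from an exact representation of the quenched mean of the backward walk's endpoint as a reweighted sum of an i.i.d.\ environment field. Fix $(t,r)$, put $\tau=N=\fl{nt}$ and $i=y(n)=\fl{ntb}+\fl{r\sqrt n}$, and for $0\le k\le N$ let $\phi_k(x)=E^\om[X^{i,\tau}_N\mid X^{i,\tau}_k=x]$, the quenched mean of the walk's position at time level $0$ given that it sits at $x$ at time level $\tau-k$. Then $\phi_N(x)=x$ and, by the Markov property and \eqref{X-tr-pr}, $\phi_k(x)=\sum_y\om_{x,\tau-k}(y-x)\phi_{k+1}(y)$. Writing $\phi_k(x)=x+(N-k)V+\psi_k(x)$ and setting $\bar\om(x,n)=\sum_{|j|\le R}j\,\om_{x,n}(j)$, a one-line computation turns the recursion into $\psi_k(x)=E^\om[\psi_{k+1}(X^{i,\tau}_{k+1})\mid X^{i,\tau}_k=x]+d(x,\tau-k)$ with $d(x,n):=\bar\om(x,n)-V$; iterating and using $\psi_N\equiv0$ gives
\[
  E^\om\!\left[X^{i,\tau}_N\right]-i-NV=\psi_0(i)=\sum_{k=0}^{N-1}E^\om\!\left[\,d\bigl(X^{i,\tau}_k,\tau-k\bigr)\right].
\]
Since $i+NV=\fl{r\sqrt n}+O(1)$, reindexing by time level $m=\tau-k$ yields
\[
  \Hbar_n(t,r)=n^{-1/4}\sum_{m=1}^{\fl{nt}}\xi_m(t,r)+o(1),\qquad \xi_m(t,r)=\sum_x P^\om\!\bigl(X^{i,\tau}_{\tau-m}=x\bigr)\,d(x,m),
\]
where $d$ is an i.i.d.\ mean-zero field, bounded by $2R$ because the weights are supported on $\{-R,\dots,R\}$.

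\textbf{Martingale structure and the CLT.} The decisive point is that the backward walk moves strictly downward in time, so the quenched law $P^\om(X^{i,\tau}_{\tau-m}=\cdot)$ is a function of the environment at time levels $>m$ only, hence independent of $\{\om_{\cdot,m}\}$ and in particular of $d(\cdot,m)$. Therefore, revealing the environment level by level from the top, $\mathbb E[\xi_m(t,r)\mid\mathcal F_{m+1}]=0$ where $\mathcal F_\ell=\sigma(\om_{\cdot,n}:n\ge\ell)$, and the same holds for any linear combination $\Xi_m=\sum_i\theta_i n^{-1/4}\xi_m(t_i,r_i)$ read off at the common level $m$. Thus $\sum_i\theta_i\Hbar_n(t_i,r_i)$ is, up to $o(1)$, a sum of a bounded martingale difference array, and the $n^{-1/4}$ scaling makes the individual terms uniformly $o(1)$, so the Lindeberg condition of the martingale CLT is free. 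The proof then reduces to identifying $\lim_n\sum_m\mathbb E[\Xi_m^2\mid\mathcal F_{m+1}]$, equivalently to computing $\lim_n\mE[\Hbar_n(s,q)\Hbar_n(t,r)]$ and showing the conditional second moments concentrate on it.

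\textbf{The covariance.} Because matched-level terms are the only surviving cross terms (for $m\ne m'$ one factor is $\mathcal F$-measurable and the other has conditional mean zero), one conditions on the levels above $m$ and uses independence of $d(\cdot,m)$ from those levels together with $\mE[d(x,m)d(x',m)]=\kappa_0\,\mathbf 1\{x=x'\}$, $\kappa_0:=\Var(\bar\om(0,0))$, to get
\[
  \mE\bigl[\Hbar_n(s,q)\Hbar_n(t,r)\bigr]=\kappa_0\,n^{-1/2}\sum_{m}\mP\bigl(X^A(m)=X^B(m)\bigr)+o(1),
\]
where $X^A,X^B$ are the two backward walks from the two starting points, run down to level $m$ in the \emph{same} environment and conditionally independent given $\om$. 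One then studies the difference $X^A-X^B$ level by level: below level $\fl{n\min(s,t)}$ only one walk is present; once both are present their separation is $(q-r)\sqrt n$ plus a diffusive spread, and away from coincidences the two walks use independent weights, so the difference performs an asymptotically mean-zero random walk with variance $\approx 2\s_1^2$ per step. A local central limit theorem for the space-time RWRE — this is where the aperiodicity assumption (i) and the ellipticity assumption (ii) are used — gives $\mP(X^A(m)=X^B(m))\approx(2\pi n v)^{-1/2}e^{-(q-r)^2/2v}$ with $v=\s_1^2(s+t)-2\s_1^2 m/n$, and a Riemann-sum passage converts $\kappa_0 n^{-1/2}\sum_m(\cdots)$ into $\kappa\,\Gamma_1((s,q),(t,r))$, with $\kappa$ the constant built from $\Var(\bar\om)$ and the two-walk meeting rate that is recorded in \eqref{def-kappa}. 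The same mechanism applied to $\Xi_m$ identifies $\lim_n\sum_m\mathbb E[\Xi_m^2\mid\mathcal F_{m+1}]$, and a second-moment estimate for the random quantity $\sum_x P^\om(X^{(i)}(m)=x)P^\om(X^{(j)}(m)=x)$ — which brings in pairs and quadruples of coupled walks — supplies the required concentration. The martingale CLT then delivers the convergence of finite-dimensional distributions.

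\textbf{Main obstacle.} The hard part is controlling two (and, for the variance concentration, four) walks living in a common environment: because walks occupying the same site at the same time level use the same random weight vector, they are not annealed-independent, and one must show the coincidence set is negligible on the diffusive scale so that the difference walk has the clean limiting diffusivity, and that the local limit theorem is uniform enough to pass to the Riemann-sum limit — equivalently, that this interaction produces exactly the constant $\kappa$. This two-walk (four-walk) analysis, supported by the local CLT for the space-time RWRE under (i)--(ii), is where the real work of Theorem \ref{qmeanthm} lies; the companion term $S_n$ was already handled by Lemma \ref{main-lm} together with a quenched CLT.
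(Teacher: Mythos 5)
Your decomposition is the same as the paper's: writing the centered quenched mean as a sum $\sum_m E^\om g(T_{\Xbar_m}\om)$ of martingale differences indexed by environment levels (your $d(x,m)$ is exactly the paper's $g(T_{x,m}\om)$, and $\kappa_0=\Var(\bar\om(0,0))$ is the paper's $\sigma_D^2$), invoking a martingale CLT, and reducing the covariance to the Green's function of the two-point motion $Y_k=X^A_k-X^B_k$ of a pair of walks in a common environment. You also correctly identify that concentration of the conditional quadratic variation forces a four-walk estimate. Where you diverge from the paper is that you try to run a single martingale CLT over all $(t_i,r_i)$ at once; the paper instead proves a fixed-$t$ Gaussian limit by the martingale CLT (Step 1) and then stitches different times together with a Markov-property decomposition at level $\fl{ns}$ (Step 2), which avoids having to redo the two-point Green's function analysis with walks at offset levels. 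Your route is viable in principle and arguably more direct, but it pushes more of the perturbed-walk analysis into the cross-time covariance.

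The genuine gap is in your assertion that a local CLT gives $\mP(X^A(m)=X^B(m))\approx(2\pi nv)^{-1/2}e^{-(q-r)^2/2v}$. That would be the LLT for the \emph{homogeneous} walk with transition $\bar q(x,y)=\sum_z p(0,z)p(0,z+y-x)$. But the actual difference process $Y_k$ is not that walk: at the origin, the two walks share the same weight vector $\om_{x,n}$, so the transition is $q(0,y)=\sum_z\bE[\om_{0,0}(z)\om_{0,0}(z+y)]\ne\bar q(0,y)$. This single-site defect does not change the diffusive scaling of $Y$, but it does change the Green's function by a multiplicative factor: the paper's Lemma \eqref{G-lim1} shows $n^{-1/2}G_n(x_n,0)\to\frac1{2\beta\s_1^2}\int_0^{2\s_1^2 t}\frac{e^{-x^2/2v}}{\sqrt{2\pi v}}\,dv$, where $\beta=\sum_x q(0,x)\abar(x)$ with $\abar$ the potential kernel of $\bar q$ (so $\sum_x\bar q(0,x)\abar(x)=1$ but $\beta\ne 1$ in general). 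If you took your naive LLT at face value and summed, you would land on the constant $\sigma_D^2/\sigma_1^2$ rather than $\kappa=\sigma_D^2/(\beta\sigma_1^2)$; the factor $1/\beta$ is precisely the ``two-walk meeting rate'' correction you gesture at, but it has to be produced, and this is where the real work of the lemma lies (via the identity $\beta G_{n-1}(0,0)=E_0[\abar(Y_n)]$ and then extending from $z=0$ to general $z$ through first-passage decomposition). Your remark that assumptions (i)--(ii) enter ``through the LLT'' also misattributes their role: aperiodicity is what makes the potential kernel $\abar$ exist with the linear growth \eqref{abar-lim}, and ellipticity \eqref{ellipt} is exactly what guarantees $\beta>0$, i.e.\ that coinciding walks do not stick together forever.

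So the architecture of your proof is correct and parallels Step 1 of the paper's argument, but the step ``LLT for the meeting probability $\Rightarrow$ Riemann sum $\Rightarrow$ $\kappa\Gamma_1$'' is not a routine passage; it is the heart of the theorem, and it requires the potential-kernel comparison of the perturbed and unperturbed Green's functions. Without it the constant in front of $\Gamma_1$ comes out wrong by the factor $\beta$.
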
 

The constant $\kappa$ is defined below in \eqref{def-kappa}. 
 
 By comparing covariances (Exercise \ref{stheateqex1})
  one  checks that $H$ can also be defined by 
 \be  H(t,r)= \sqrt\kappa
\iint_{[0,t]\times\Rb} \varphi_{\sigma^2_1(t-s)}(r-z)\,
dW(s,z).
\label{y-st-int}
\ee
 Formula \eqref{y-st-int} implies
that  process $\{H(t,r)\}$ is a weak solution for this initial value problem 
of the  stochastic heat equation: 
\be
 H_t=\tfrac{\sigma_1^2} 2 H_{rr} + \sqrt\kappa\, \dot{W}\,,
\qquad H(0,r)\equiv 0.  
\label{st-heat-eqn}
\ee
 
\hbox{}

Proof of Theorem \ref{qmeanthm} happens in two steps:  first for multiple space points
at a fixed time, and then across time points. 

\medskip

{\bf Step 1.  Martingale increments for fixed time.  }

\medskip

Let us abbreviate $X^*_s=X^{ \fl{ntb}+\fl{r\sqrt{n}\,} ,\, \fl{nt}}_s$.   Note that 
\[  E(X^*_{\fl{nt}})= \fl{ntb}+\fl{r\sqrt{n}\,} +\fl{nt}V =r\sqrt n + O(1)  \]
so $\Hbar(t,r)$ in \eqref{defHbar} is essentially centered and we can pretend that it is 
exactly centered. 
Let 
\[    g(\om)= E^\om(X_1^{0,0})-V \]
be the centered local drift.  
Recall the space-time walk $\Xbar^{x,s}_m=(X^{x,s}_m, s-m)$. 
By the Markov property  of the walk 
\be
\begin{aligned}
&E^\om(X^{x,s}_n)-x-nV=
 \sum_{k=0}^{n-1} E^{\om}\bigl[
X^{x,s}_{k+1}-X^{x,s}_k-V \bigr]\\
&= \sum_{k=0}^{n-1} E^{\om}\bigl[
E^{ T_{\{\Xbar^{x,s}_k\}}\om} (X^{0,0}_1-V)\bigr]
=\sum_{k=0}^{n-1} E^{\om} g( T_{\Xbar^{x,s}_k}\om).
\end{aligned}
\label{rw-calc-1} 
\ee
   $(T_{x,m}\om)_{y,s}=\om_{x+y,m+s}$ is the space-time shift of environments.  
 The $g$-terms above are martingale increments under the distribution $\P$ of the 
environments, relative to the filtration defined by levels of environments:
writing  $\bar\om_{m,n}=\{\om_{x,s}: x\in\Z, m\le s\le n\}$, and with fixed $(x,m)$ 
and time $n=0,1,2\dotsc$, 
\begin{align*}
&\bE\bigl[ E^{\om} g( T_{\Xbar^{x,m}_n}\om) \big\vert \bar\om_{m-n+1,m}\bigr] \\
&\quad =\sum_{y\in\bZ}  P^\om\{ \Xbar^{x,m}_n = (y,m-n) \} 
 \int g( T_{y,m-n}\om)\,\bP(d\bar\om_{m-n}) = 0. 
\end{align*}
The point above is that the probability $P^\om\{ \Xbar^{x,m}_n = (y,m-n) \} $ is
determined by $\bar\om_{m-n+1,m}$.  

It turns out that we can apply a martingale central limit theorem to conclude that, for a 
fixed $t$,  
a vector 
\[  \bigl(  \Hbar_n(t,r_1), \Hbar_n(t,r_2),  \dotsc, \Hbar_n(t,r_N) \bigr)   \]
becomes a Gaussian vector in the $n\to\infty$ limit.  Let us take this for granted, and
compute the covariance of the limit.   This leads us to study an auxiliary Markov chain
which has been useful for space-time (and more general ballistic) RWRE. 

Given points $(t,q)$ and $(t,r)$, abbreviate $X^{(1)}_s=X^{ \fl{ntb}+\fl{q\sqrt{n}\,} ,\, \fl{nt}}_s$
and $X^{(2)}_s=X^{ \fl{ntb}+\fl{r\sqrt{n}\,} ,\, \fl{nt}}_s$. 
\begin{align*} 
&\E\bigl[ \Hbar_n(t,q)\Hbar_n(t,r) \bigr]\\
&=n^{-1/2} \E \Bigl[  E^\om\bigl( X^{(1)}_{\fl{nt}} -\fl{q\sqrt{n}\,}\bigr) 
E^\om\bigl( X^{(2)}_{\fl{nt}} -\fl{r\sqrt{n}\,}\bigr) \Bigr]\\
&=n^{-1/2} \E \biggl[ \,  \biggl(\,  \sum_{j=0}^{\fl{nt}-1} E^{\om} g( T_{\Xbar^{(1)}_j}\om) \biggr)
 \biggl(\,  \sum_{k=0}^{\fl{nt}-1} E^{\om} g( T_{\Xbar^{(2)}_k}\om) \biggr) \, \biggr] \\
&=n^{-1/2} \sum_{j,k} \sum_{x,y}  
\E \Bigl[  P^\om(X^{(1)}_{j}=x)  P^\om(X^{(2)}_{k}=y)  g(T_{x, \fl{nt}-j}\om)  g(T_{y, \fl{nt}-k}\om)   
 \Bigr]  \\
&=\sigma_D^2 n^{-1/2}  \sum_{k=0}^{\fl{nt}-1}  \E P^\om(X^{(1)}_{k}=X^{(2)}_{k}) . 
\end{align*}
The last step uses the independence of the environment.  
We denote the variance of the drift by $\s_D^2=\E(g^2)$.  Let now 
$Y_k=X^{(2)}_{k}-X^{(1)}_{k}$ be the difference of two independent walks in a common
environment.  $Y_k$ is a Markov chain on $\bZ$ with transition probability 
\[
q(x,y)=  \begin{cases}  \smallskip \ddd\sum_{z\in\Zb} \bE[\om_{0,0}(z)\om_{0,0}(0,z+y)]  &x=0\\ 
         \ddd\sum_{z\in\Zb} p(0,z)p(0,z+y-x)  &  x\neq 0.  
\end{cases} \] 
$Y_n$ can be thought of 
as  a symmetric random walk on $\Zb$ whose transition
has been perturbed at the origin. The corresponding  homogeneous,
unperturbed 
transition probabilities are 
\[
\bar{q}(x,y)=\bar{q}(0,y-x)=
\sum_{z\in\Zb} p(0,z)p(0,z+y-x) 
\qquad (x ,y\in\Zb).
\]

Continuing from above,  with $x_n=\fl{r\sqrt{n}\,}-\fl{q\sqrt{n}\,}$, 
\be 
\E\bigl[ \Hbar_n(t,q)\Hbar_n(t,r) \bigr]=\frac{ \sigma_D^2}{\sqrt n}  \sum_{k=0}^{\fl{nt}-1} 
q^k(x_n, 0) =\frac{ \sigma_D^2}{\sqrt n}  G_{\fl{nt}-1}(x_n,0). 
\label{G4}\ee
If $Y_k$ were a symmetric random walk, we would know this limit exactly from the
local central limit theorem:

\begin{lemma} For a mean $0$, span $1$  random walk $S_n$ 
on $\Z$ with finite variance $\sigma^2$,  $a\in\bR$ 
and points $a_n\in\bZ$ such that $\abs{a_n-a\sqrt n}=O(1)$, 
\[ \lim_{n\to\infty}  \frac1{\sqrt n}\sum_{k=0}^{\fl{nt}-1} P(S_k=a_n)
=\frac1{\sigma^2} \int_0^{\sigma^2t} \frac1{\sqrt{2\pi v}}{\exp\Bigl\{-\frac{a^2}{2v}\Bigr\}}\,dv. 
\] 
\label{Glimlm}\end{lemma}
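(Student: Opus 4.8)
The plan is to reduce the statement to Gnedenko's local central limit theorem (LCLT) for the aperiodic walk $S_k$, identify the resulting sum as a Riemann sum, and finish with a change of variables. Since $S_k$ is a mean-zero, span-$1$ walk on $\Z$ with finite variance $\sigma^2$, the LCLT (in the form valid under only a second moment; see \cite{durr}, \cite{spitzer}) gives $\delta_k:=\sup_{x\in\Z}|\sqrt k\,P(S_k=x)-\varphi_{\sigma^2}(x/\sqrt k)|\to0$ as $k\to\infty$, where $\varphi_{\sigma^2}$ is the Gaussian density from \eqref{gauss}; the span-$1$ hypothesis is exactly what rules out any periodicity correction. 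I would also use the crude concentration bound $\sup_{x\in\Z}P(S_k=x)\le C/\sqrt k$ for all $k\ge1$, which follows from the LCLT for $k$ large and is trivial for $k$ bounded.

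First I would fix a small $\e>0$ and split the sum at $k=\fl{\e n}$. For the low range the concentration bound yields $n^{-1/2}\sum_{k=0}^{\fl{\e n}}P(S_k=a_n)\le n^{-1/2}\bigl(1+C\sum_{k=1}^{\fl{\e n}}k^{-1/2}\bigr)$, which is $\le C'\sqrt\e$ for all large $n$. For the main range $\fl{\e n}\le k\le\fl{nt}-1$ I would apply the LCLT: writing $P(S_k=a_n)=k^{-1/2}\varphi_{\sigma^2}(a_n/\sqrt k)+r_k$ with $|r_k|\le\delta_k/\sqrt k$, the error contributes at most $\bigl(\sup_{k\ge\fl{\e n}}\delta_k\bigr)\,n^{-1/2}\sum_{k\le nt}k^{-1/2}\le o(1)\cdot 2\sqrt t\to0$. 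For the main term, using $a_n=a\sqrt n+O(1)$ one has $\varphi_{\sigma^2}(a_n/\sqrt k)=(2\pi\sigma^2)^{-1/2}\exp\{-(a_n^2/n)/(2\sigma^2(k/n))\}$, so that
\[
n^{-1/2}\sum_{\fl{\e n}\le k\le\fl{nt}-1}\frac{1}{\sqrt k}\,\varphi_{\sigma^2}\!\Bigl(\frac{a_n}{\sqrt k}\Bigr)=\frac1n\sum_{\fl{\e n}\le k\le\fl{nt}-1}g_n\!\Bigl(\frac kn\Bigr),
\]
where $g_n(s)=(2\pi\sigma^2 s)^{-1/2}\exp\{-(a_n^2/n)/(2\sigma^2 s)\}$ converges uniformly on $[\e,t]$ to $g(s)=(2\pi\sigma^2 s)^{-1/2}\exp\{-a^2/(2\sigma^2 s)\}$. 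This is a Riemann sum for the continuous function $g$ on $[\e,t]$, hence converges to $\int_\e^t g(s)\,ds$.

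Letting $\e\downarrow0$ then finishes the argument: $g$ is integrable near $0$ (one has $g(s)\le(2\pi\sigma^2 s)^{-1/2}$, and $g(s)\to0$ as $s\to0$ when $a\ne0$), so $\int_\e^t g\to\int_0^t g$ while the $O(\sqrt\e)$ and $o(1)$ pieces disappear. Thus the limit equals $\int_0^t(2\pi\sigma^2 s)^{-1/2}\exp\{-a^2/(2\sigma^2 s)\}\,ds$, and the substitution $v=\sigma^2 s$ turns this into $\sigma^{-2}\int_0^{\sigma^2 t}(2\pi v)^{-1/2}\exp\{-a^2/(2v)\}\,dv$, as claimed. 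The only point requiring genuine care is the behaviour near $k=0$ (equivalently $v=0$): the short-time part of the sum must be shown uniformly small, which is handled by the concentration bound, and the limiting integral must be non-singular at the origin, where it behaves like $\sqrt v$. Beyond invoking the LCLT under a bare second-moment hypothesis, I anticipate no real obstacle.
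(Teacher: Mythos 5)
Your proof is correct and follows exactly the route the paper indicates: invoke the uniform local CLT for a span-$1$, finite-variance walk and then carry out the Riemann-sum passage to the integral, which the paper explicitly leaves ``as an exercise.'' Your treatment of the small-$k$ tail via the concentration bound $\sup_x P(S_k=x)\le C/\sqrt k$ and the uniform convergence of $g_n$ on $[\e,t]$ supplies precisely the omitted details.
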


\begin{proof}  By the local CLT \cite[Section 2.5]{durr}
\[   \lim_{m\to\infty} \sup_{x\in\Z} \sqrt m\,\Bigl\lvert  P(S_m=x) - \frac1{\sqrt{2\pi m\sigma^2}}
\exp\Bigl\{-\frac{x^2}{2m\sigma^2}\Bigr\} 
\Bigr\rvert =0.  \]
Use this in a Riemann sum argument (details as exercise). 
\end{proof} 

For the homogeneous $\qbar$-walk this lemma gives  (using symmetry)
\be  \lim_{n\to\infty} 
\frac{ 1}{\sqrt n}  \sum_{k=0}^{\fl{nt}-1} 
\qbar^k(x_n, 0) = 
\frac{1}{2\sigma_1^2} \int_0^{2\sigma_1^2t} \frac1{\sqrt{2\pi v}}
{\exp\Bigl\{-\frac{(r-q)^2}{2v}\Bigr\}}\,dv. 
\label{G8}\ee
Now the task is to relate the transitions $q$ and $\qbar$.  For this purpose we introduce
one more player: the potential kernel of the symmetric $\qbar$-walk, defined by 
\be\begin{aligned}  \abar(x)   &=\lim_{n\to\infty} \bigl[ \bar G_n(0,0)-\bar G_n(x,0)\bigr] \\
&=\lim_{n\to\infty} \biggl\{ \; \sum_{k=0}^{n}  \qbar^k(0, 0) 
- \sum_{k=0}^{n}  \qbar^k(x, 0) \biggr\}.  \end{aligned}  \label{defabar}\ee
The potential kernel satisfies $\abar(0)=0$, 
  the equations 
\be
\abar(x)=\sum_{y\in\Zb} \qbar(x,y)\abar(y) 
\quad\text{for $x\neq 0$, and} \quad 
\sum_{y\in\Zb} \qbar(0,y)\abar(y)=1, 
\label{abar-eq}
\ee
and the limit 
\be 
\lim_{x\to\pm\infty}\frac{\abar(x)}{\abs{x}} = \frac1{2\sigma_1^2}.
\label{abar-lim}
\ee
(For existence of $\abar$ and its properties, see \cite[Sections 28-29]{spitzer}.)

\begin{example} If for some $k\in\Zb$, $p(0,k)+p(0,k+1)=1$, so that
$\qbar(0,x)=0$ for $x\notin\{-1,0,1\}$, then 
$\abar(x)=\abs{x}/(2\sigma_a^2)$. 
\label{ex-abar-1}
\end{example}   

Define the constant 
\be
\beta=\sum_{x\in\Zb} q(0,x)\abar(x).
\label{def-beta-2}
\ee
This constant accounts for the difference in the limits of the Green's functions
for transitions $q$ and $\qbar$.

\begin{lemma} Let $x\in\Rb$ and 
  $x_n\in\bZ$ be  
 such that 
$x_n- n^{1/2}x$ stays bounded. 
Then
\be
\lim_{n\to\infty} 
{n}^{-1/2} {G}_n\bigl(x_n,0\bigr)
=\frac{1}{2\beta\s_1^2} \int_0^{2\s_1^2} 
\frac1{\sqrt{2\pi v}}
\exp\Bigl\{-\frac{x^2}{2v}\Bigr\}  \,d v. 
\label{G-lim1}
\ee
 \end{lemma}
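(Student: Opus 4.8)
The plan is to compare the Green's function $G_n(x,0)=\sum_{k=0}^{n}q^k(x,0)$ of the perturbed difference walk $Y_k$ with the Green's function $\bar G_n(x,0)=\sum_{k=0}^{n}\qbar^{\,k}(x,0)$ of the homogeneous $\qbar$-walk, and to show that $G_n(x_n,0)\sim\beta^{-1}\bar G_n(x_n,0)$ as $n\to\infty$. Granting this, the lemma is immediate: the $\qbar$-walk has mean zero, span one and variance $2\s_1^2$ (these facts are already used in deriving \eqref{G8}), so Lemma \ref{Glimlm} with $\sigma^2=2\s_1^2$, $t=1$, $a=x$, $a_n=x_n$ gives $n^{-1/2}\bar G_n(x_n,0)\to\frac1{2\s_1^2}\int_0^{2\s_1^2}\frac1{\sqrt{2\pi v}}\exp\{-x^2/2v\}\,dv$, and multiplying by $\beta^{-1}$ produces exactly the right-hand side of \eqref{G-lim1}. (The $O(1)$ discrepancy between $\sum_{k=0}^{n}$ and $\sum_{k=0}^{\fl{nt}-1}$ at $t=1$ is harmless after division by $\sqrt n$.)

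The comparison is first established at the origin, $x=0$. Because $q$ and $\qbar$ agree off $0$, the first-return generating function of $Y$ can be written through the first-passage generating function $\bar F_y(s)=\sum_{m\ge1}\bar P^{\,y}(T_0=m)s^m$ of the homogeneous walk (set $\bar F_0\equiv1$):
\[
R^q(s):=\sum_{j\ge1}P^0(T_0^+=j)s^j=s\sum_y q(0,y)\bar F_y(s),
\qquad\text{so}\qquad
1-R^q(s)=(1-s)+s\sum_y q(0,y)\bigl(1-\bar F_y(s)\bigr).
\]
With $\bar U_0(s)=\sum_n\qbar^{\,n}(0,0)s^n=(1-\bar R(s))^{-1}$ and $\bar U_y(s)=\bar F_y(s)\bar U_0(s)$ for $y\ne0$, the definition \eqref{defabar} of the potential kernel together with Abel's theorem gives $\frac{1-\bar F_y(s)}{1-\bar R(s)}=\bar U_0(s)-\bar U_y(s)\to\abar(y)$ as $s\uparrow1$. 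Since $q(0,\cdot)$ is supported on $\{|y|\le 2R\}$ the sum over $y$ is finite, and since $\qbar^{\,n}(0,0)\to0$ forces $\frac{1-s}{1-\bar R(s)}=(1-s)\bar U_0(s)\to0$, we conclude $\frac{1-R^q(s)}{1-\bar R(s)}\to\sum_y q(0,y)\abar(y)=\beta$. Equivalently $U_0^q(s):=\sum_n q^n(0,0)s^n=(1-R^q(s))^{-1}\sim\beta^{-1}\bar U_0(s)$; combined with $\bar G_n(0,0)\sim C_0\sqrt n$ for an explicit $C_0>0$ (Lemma \ref{Glimlm} at $x=0$) and a Hardy--Littlewood Tauberian theorem, this yields $G_n(0,0)=\sum_{k\le n}q^k(0,0)\sim\beta^{-1}C_0\sqrt n=\beta^{-1}\bar G_n(0,0)$.

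To pass from the origin to a general $x_n$ we use the first-passage decomposition $G_n(x,0)=\sum_{j=0}^{n}\bar f_j(x)\,G_{n-j}(0,0)$ for $x\ne0$, valid because $q$ and $\qbar$ coincide off $0$ (here $\bar f_j(x)=\bar P^{\,x}(T_0=j)$), and the identical identity for $\bar G_n$. Setting $\varepsilon_m=\bigl|G_m(0,0)-\beta^{-1}\bar G_m(0,0)\bigr|/\sqrt{m+1}$ — a bounded sequence with $\varepsilon_m\to0$ by the previous step — we get
\[
n^{-1/2}\bigl|G_n(x_n,0)-\beta^{-1}\bar G_n(x_n,0)\bigr|\;\le\; C\sum_{j=0}^{n}\bar f_j(x_n)\,\varepsilon_{n-j}.
\]
Splitting the sum at $j=n-M$: the range $j\le n-M$ contributes at most $(\sup_{m\ge M}\varepsilon_m)\sum_j\bar f_j(x_n)\le\sup_{m\ge M}\varepsilon_m$; the range $n-M<j\le n$ contributes at most a constant times $\sum_{j=n-M+1}^{n}\bar f_j(x_n)\le M\max_{n-M<j\le n}\max_{0<|y|\le2R}\qbar^{\,j-1}(x_n,y)=O(M/\sqrt n)$ by the local CLT bound on $\qbar$ underlying Lemma \ref{Glimlm}. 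Letting $n\to\infty$ and then $M\to\infty$ gives $n^{-1/2}\bigl|G_n(x_n,0)-\beta^{-1}\bar G_n(x_n,0)\bigr|\to0$, which completes the comparison and the proof.

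The main obstacle is the origin-to-origin step: identifying the limiting constant as $\beta$ through the potential kernel identities \eqref{defabar}--\eqref{abar-eq}, and converting the generating-function asymptotics $U_0^q(s)\sim\beta^{-1}\bar U_0(s)$ into the sequence asymptotics $G_n(0,0)\sim\beta^{-1}\bar G_n(0,0)$, which relies on the span-one property of the $\qbar$-walk (already needed for the local CLT behind Lemma \ref{Glimlm}). The two first-passage decompositions and the tail estimate in the last step are routine.
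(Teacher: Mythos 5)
Your proof is correct, and it takes a genuinely different route from the paper for the crucial origin-to-origin step. The paper establishes \eqref{barG-lim2.5} through an exact summation-by-parts identity: using the harmonicity equations \eqref{abar-eq} together with $\qbar(x,y)=q(x,y)$ for $x\ne0$, it telescopes to $\beta\,G_{n-1}(0,0)=E_0[\abar(Y_n)]$, and then evaluates the right-hand side via $\abar(y)/|y|\to(2\s_1^2)^{-1}$, the CLT for the difference walk $Y_n=X_n-\Xtil_n$, and uniform integrability. You instead work at the level of generating functions: from the first-return decomposition $1-R^q(s)=(1-s)+s\sum_y q(0,y)\bigl(1-\bar F_y(s)\bigr)$, the renewal identity $\frac{1-\bar F_y(s)}{1-\bar R(s)}=\bar U_0(s)-\bar U_y(s)$, and Abel's theorem applied to the convergent series defining $\abar$ in \eqref{defabar}, you obtain $U_0^q(s)/\bar U_0(s)\to\beta^{-1}$; you then transfer this to partial-sum asymptotics by a Karamata/Hardy--Littlewood Tauberian argument (valid since $q^k(0,0)\ge0$) combined with the known Abelian asymptotics of $\bar U_0(s)$. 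Both are sound. The paper's route yields a transparent exact identity and reuses the quenched CLT for $Y_n$, a structural fact about the model that is used elsewhere; your route is closer to the classical analytic toolkit for perturbed random walks, trades the CLT for $Y_n$ for the local CLT of the homogeneous $\qbar$-walk (which is needed anyway for Lemma \ref{Glimlm}), but at the cost of invoking a Tauberian theorem as a black box. Your propagation step from $0$ to $x_n$ is essentially the paper's first-passage decomposition with a slightly sharper tail estimate: the paper bounds $f^k(z,0)\le1$ and $|G_m-\beta^{-1}\bar G_m|=O(m)$ to get a bound that is moreover uniform in $z$, whereas you use the local-CLT bound $\bar f_j(x_n)=O(j^{-1/2})$ together with $|G_m-\beta^{-1}\bar G_m|=O(\sqrt m)$; both give the result needed here. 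One minor point: your first-passage decomposition is written for $x\ne0$, so if $x=0$ and $x_n=0$ for infinitely many $n$ you should fall back on the origin case directly along that subsequence; this is cosmetic.
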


\begin{proof}
 Given limit \eqref{G8}, it suffices to prove 
\be \sup_{z\in\Z} \, \Bigl\lvert \frac{\beta}{\sqrt{n}} {G}_n(z,0)
\,-\, \frac1{\sqrt{n}} \bar{G}_n(z,0) \Bigr\rvert \to 0.  \label{raptemp9}\ee
First we prove the case   $z=0$. 

By \eqref{G8}, 
\be
\lim_{n\to\infty} 
{n}^{-1/2} \bar{G}_n(0,0)
=\frac1{\sqrt{\pi\s_1^2}}\,.
\label{barG-lim2}
\ee
We need to show 
\be
\lim_{n\to\infty} 
{n}^{-1/2} {G}_n(0,0)
=\frac1{\beta\sqrt{ \pi\s_1^2}}.  
\label{barG-lim2.5}
\ee
Using \eqref{abar-eq}, $\abar(0)=0$,
 and $\qbar(x,y)=q(x,y)$ for $x\neq 0$,   \begin{align*}
\sum_{x\in\Zb} q^m(0,x)\abar(x) &=\sum_{x\neq 0} q^m(0,x)\abar(x) =
\sum_{x\neq 0, y\in\Zb}
 q^m(0,x) \qbar(x,y)\abar(y)\\
& =\sum_{x\neq 0, y\in\Zb}
 q^m(0,x) q(x,y)\abar(y)  \\
&=\sum_{y\in\Zb} q^{m+1}(0,y)\abar(y) 
-q^m(0,0) \sum_{y\in\Zb} q(0,y)\abar(y).
\end{align*}
Constant  $\beta$ appears  in the last term.  
  Sum over $m=0,1,\dotsc, n-1$ to get 
\[
\bigl(1+q(0,0)+\dotsm +q^{n-1}(0,0)\bigr)\beta
= \sum_{x\in\Zb} q^n(0,x)\abar(x)
\]
and write this in the form 
\[
{n}^{-1/2} \beta {G}_{n-1}(0,0) = {n}^{-1/2} E_0\bigl[\abar(Y_n)\bigr].
\]
Recall that $Y_n=X_n-\Xtil_n$ where $X_n$ and $\Xtil_n$ are two
independent walks in the same environment.  By the quenched CLT for
space-time RWRE,  
  ${n}^{-1/2}Y_n\limd \cN(0, 2\s_1^2)$. Marginally  $X_n$ and $\Xtil_n$ are 
 i.i.d.\ walks with bounded steps, hence there is 
enough uniform integrability to conclude that
\[ {n}^{-1/2} E_0\abs{Y_n} \to 2\sqrt{\s_1^2/\pi}. \] 
By \eqref{abar-lim} and some estimation (exercise), 
\be 
{n}^{-1/2} E_0\bigl[\abar(Y_n)\bigr] \to \frac1{\sqrt{\s_1^2\pi}}\,.
\label{raptemp11}\ee
This proves \eqref{barG-lim2.5} and thereby limit \eqref{raptemp9} for $z=0$. 

To get the full statement in  \eqref{raptemp9},
for $k\geq 1$ and $z\ne 0$  let
\[
f^k(z,0)=\ind_{\{z\neq 0\}} \sum_{z_1\neq 0,\dotsc,z_{k-1}\neq 0} 
q(z,z_1)q(z_1,z_2)\dotsm q(z_{k-1},0) 
\]
denote the probability that the first visit to the origin
 occurs at time $k$.
This quantity is the same for both  $q$ and $\bar{q}$ because these processes 
do not differ until the origin is visited. 
Choose $n_0$ so that 
\[\lvert \beta  {G}_{m}(0,0)\,-\,
 \bar{G}_{m}(0,0) \rvert\leq \e\sqrt{m}\quad  \text{for $m\ge n_0$.} \]
Then 
 \begin{align*}
& \sup_{z\ne 0} \Bigl\lvert \frac{\beta}{\sqrt{n}} {G}_n(z,0)
\,-\, \frac1{\sqrt{n}} \bar{G}_n(z,0) \Bigr\rvert \\
&\quad \leq \sup_{z\ne 0}\; \frac1{\sqrt{n}} \sum_{k=1}^n f^k(z,0) 
\bigl\lvert \beta  {G}_{n-k}(0,0)
\,-\,  \bar{G}_{n-k}(0,0) \bigr\rvert \\
&\quad\le \sup_{z\ne 0}\; \frac\e{\sqrt{n}} \sum_{k=1}^{n-n_0} f^k(z,0) \sqrt{n-k}
+ \frac{Cn_0^2}{\sqrt{n}}   \le \e + \frac{Cn_0^2}{\sqrt{n}}. 
 \end{align*}
Letting $n\to\infty$ completes the proof.  \end{proof}

Combining \eqref{G4} and \eqref{G-lim1} gives 
\be\begin{aligned} 
\lim_{n\to\infty} \E\bigl[ \Hbar_n(t,q)\Hbar_n(t,r) \bigr]&= 
\frac{\s_D^2}{2\beta\s_1^2} \int_0^{2\s_1^2}  \frac1{\sqrt{2\pi v}}
\exp\Bigl\{-\frac{x^2}{2v}\Bigr\}  \,d v \\[5pt]
&=\kappa  \Gamma_1((t,q),(t,r)),   
\end{aligned}\label{Hlim12}\ee
where we defined a the new constant 
\be
\kappa=\frac{\sigma_D^2}{\beta\s_1^2  }.
\label{def-kappa}
\ee
While we have not furnished all the details, let us consider proved that for a 
fixed $t$, the finite-dimensional distributions of $\Hbar(t,r)$ converge 
to the Gaussian process $H(t,r)$ with covariance  
 $\kappa  \Gamma_1((t,q),(t,r))$.

\medskip

{\bf Step 2.  Markov property for  time steps.  }

\medskip

This step is overly technical and so we only give a sketch of the idea behind it.  
Stopping  and restarting the walk $ X^{ \fl{ntb}+\fl{r\sqrt{n}\,} ,\, \fl{nt}}_\centerdot$ at level $\fl{ns}$
gives:
\begin{align*}
&E^\om\bigl( X^{ \fl{ntb}+\fl{r\sqrt{n}\,} ,\, \fl{nt}}_{\fl{nt}}\bigr)  -\fl{r\sqrt{n}\,} \\
&=\sum_{x\in\bZ} P^\om\bigl( X^{ \fl{ntb}+\fl{r\sqrt{n}\,} ,\, \fl{nt}}_{\fl{nt}-\fl{ns}}=\fl{nsb} + x\bigr)
E^\om\bigl( X^{ \fl{nsb}+x ,\, \fl{ns}}_{\fl{ns}}\bigr)  -\fl{r\sqrt{n}\,} \\
&=\sum_{x\in\bZ} P^\om\bigl( X^{ \fl{ntb}+\fl{r\sqrt{n}\,} ,\, \fl{nt}}_{\fl{nt}-\fl{ns}}=\fl{nsb} + x\bigr)
\bigl[  \,E^\om\bigl( X^{ \fl{nsb}+x ,\, \fl{ns}}_{\fl{ns}}\bigr) -x\,\bigr] \\
&\qquad + E^\om\bigl( X^{ \fl{ntb}+\fl{r\sqrt{n}\,} ,\, \fl{nt}}_{\fl{nt}-\fl{ns}}\bigr) - \fl{nsb} -\fl{r\sqrt{n}\,}.
\end{align*}
Change summation index to $u=x/\sqrt n$.  Then we have approximately the identity
\begin{align*}
&\Hbar_n(t,r) = \\
&\sum_{u\in n^{-1/2}\bZ}  
P^\om\biggl\{   \frac{ X^{ \fl{ntb}+\fl{r\sqrt{n}\,} ,\, \fl{nt}}_{\fl{nt}-\fl{ns}} - \fl{nsb}-r\sqrt n}{\sqrt n}
=u-r\biggr\}  \Hbar_n(s,u) \\
&\qquad\qquad  +\Hbar^*_n(t-s, r),
\end{align*}
where $\Hbar^*_n(t-s, r)$ is  the same   as $\Hbar_n(t-s, r)$ but with origin shifted
(approximately)  to 
$(nsb, ns)$.   
On the right-hand side, the processes $\Hbar_n(s,\,\cdot)$ and
 $\Hbar^*_n(t-s,\,\cdot)$ are independent
of each other because they depend on disjoint levels of environments:
$\Hbar_n(s,\,\cdot)$ uses $\bar\om_{1, \fl{ns}}$ and
 $\Hbar^*_n(t-s,\,\cdot)$ uses $\bar\om_{\fl{ns}+1, \,\fl{nt}}$.
 As $n\to\infty$ the probability coefficients of the sum converge to 
deterministic Gaussian probabilities by the quenched CLT for the RWRE.  
By the result for fixed $t$,   the right-hand side above converges
in distribution.

Taking the limits and supplying all the technicalities  leads to the equation 
\[  H(t,r)=\int_\R \varphi_{\s_1^2(t-s)}(u-r) H(s,u)\,du + H^*(t-s,r)  \]
where on the right, the processes  $H(s,\,\cdot)$ and $H^*(t-s,\,\cdot)$ are independent.  
From this equation one can verify that the finite-dimensional distributions 
of the process $H(t,r)$ are Gaussian with covariance  $\kappa  \Gamma_1((s,q),(t,r))$
as stated in Theorem \ref{qmeanthm}.  

This concludes the presentation of the random  average process limit.

\subsection*{References}
 The fluctuation results  for RAP presented here are from \cite{bala-rass-sepp}. 
 In addition to  \cite{ferr-font-rap}, RAP was later studied also in \cite{font-mede-vach}.
 The quenched CLT for 
 space-time RWRE has been proved several times with progressively better
 assumptions, see \cite{rass-sepp-05}.

\section{Asymmetric simple exclusion process}\label{asepch}
\setcounter{equation}{0}
The  asymmetric
simple exclusion process (ASEP) is a Markov process that
describes the motion of 
  particles on the one-dimensional 
integer lattice $\Zb$. Each particle executes 
a continuous-time nearest-neighbor random walk on $\bZ$
with jump rate $p$ to the right and $q$ to the left.  
Particles interact through the exclusion rule
which  means that 
 at most one particle is allowed at each site.  
Any attempt to jump onto an already occupied site is prevented from happening.
  The asymmetric case is $p\ne q$.
We  assume  $0\le q< p\le 1$ 
and   $p+q=1$.

For this  process we do not derive precise distributional limits for the current,
but only bounds that reveal the order of magnitude of the fluctuations. 
In contrast with the earlier results for linear flux, the magnitude of current fluctuations
is now $t^{1/3}$. 

The proofs of these bounds are based on couplings, and make heavy use of the 
notion of {\sl second class particle}.  
 
\subsection{Basic properties}

We run quickly through the fundamentals 
of  $(p,q)$-ASEP.

\medskip

{\bf Definition and graphical construction.}
    The state of the system at time $t$ is
a configuration $\eta(t)=(\eta_i(t))_{i\in\bZ}\in\{0,1\}^\bZ$
of zeroes and ones.  The value $\eta_i(t)=1$ means that 
 site $i$ is occupied by a particle at time $t$, while the
value  $\eta_i(t)=0$ means that 
 site $i$ is vacant at time $t$. 

 The motion of the particles is controlled by
independent Poisson processes ({\sl Poisson clocks})  
$\{N^{i\to i+1}, N^{i\to i-1}: i\in\bZ\}$ on $\bR_+$.  These Poisson processes 
are independent of the (possibly random) initial configuration $\eta(0)$. 
Each Poisson clock  $N^{i\to i+1}$ has rate $p$
 and each  $N^{i\to i-1}$ has rate $q$. If
$t$ is  a jump time for $N^{i\to i+1}$ and if
$(\eta_i(t-), \eta_{i+1}(t-))=(1,0)$ then at time
$t$ the particle from site $i$ moves to site $i+1$
and the new values are $(\eta_i(t), \eta_{i+1}(t))=(0,1)$.
Similarly if $t$ is  a jump time for $N^{i\to i-1}$ a 
particle is moved from $i$ to $i-1$ at time $t$,
provided the configuration at time $t-$ permits this
move.  If the jump prompted by a Poisson clock is not
permitted by the state of the system, this jump attempt
 is simply
ignored and the particles resume waiting for the next
prompt coming from the Poisson clocks.   

This construction of the process is known as the {\sl graphical
construction} or the {\sl Harris construction}. 
When the initial state is a fixed configuration $\eta$,  
$P^\eta$ denotes the distribution of the process. 
 
We write $\eta$, $\omega$, etc for elements of the state space
$\{0,1\}^\bZ$, but also for the entire process so that 
$\eta$-process
 stands for $\{\eta_i(t): i\in\bZ, 0\le t<\infty\}$. 
The configuration $\delta_i$ is the state that has a single
particle at position $i$ but otherwise the lattice is
vacant.  

\begin{remark}
 When infinite particle systems such as ASEP are constructed
with Poisson clocks, there is an issue of well-definedness that needs to be resolved.
Namely, if we ask whether site $x$ is occupied at time $t$, we need to look
backwards in time at all the possible sites from which a particle could have 
moved to $x$ by time $t$.  This might involve an infinite regression:
perhaps there is a sequence of times $t>t_1>t_2>t_3>\dotsm >0$ such that 
Poisson clock $N^{x-k\to x-k+1}$ signaled a jump attempt at time $t_k$.  Such a
sequence of jumps could in principle bring a particle to $x$ ``from infinity.''

However, it is easily seen that this happens only with probability zero.  
For any fixed $T<\infty$
there is a positive probability that both $N^{i\to i+1}$ and 
$N^{i+1\to i}$ are empty in $[0,T]$.  Consequently almost surely there
are infinitely many edges $(i,i+1)$ across which no jump attempts
are made before time $T$.  This way the construction can actually be
performed for finite portions of the lattice at a time. 

Similar issue arises with the possibility of simultaneous 
conflicting jump commands.  By excluding a zero-probability set
of realizations of $\{N^{i\to i\pm 1}\}$ we can assume that there
are no  simultaneous jump attempts. 
\end{remark}

\medskip

{\bf Invariant distributions.} 
A basic fact is that i.i.d.~Bernoulli distributions 
$\{\nu^\rho\}_{\rho\in[0,1]}$ are 
extremal invariant distributions for ASEP. 
For each density value $\rho\in[0,1]$, $\nu^\rho$ is the 
probability measure on $\{0,1\}^\bZ$ 
 under which the occupation variables 
$\{\eta_i\}$ are i.i.d.\ with common mean 
$\int \eta_i\,d\nu^\rho=\rho$.  When the process
$\eta$ is stationary with time-marginal $\nu^\rho$, we write
$P^\rho$ for the probability distribution of the entire
process.  
The {\sl stationary density-$\rho$ process} means the ASEP
$\eta$ that is stationary in time and has marginal
distribution $\eta(t)$ $\sim$ $\nu_\rho$ for all $t\in\bR_+$. 

\begin{remark}
A note about how one would check the invariance. 
 In general, the infinitesimal generator  $L$ 
of a Markov process is an operator 
 defined as the derivative of the semigroup:
\be
L\varphi(\eta)  =\lim_{t\searrow 0}  \frac{ E^\eta[f(\eta(t))] -f(\eta) }{t}.
\label{tasepL1}\ee
Above $E^\eta$ denotes expectation under $P^\eta$, the distribution of the process 
when the initial state is $\eta$.  
 The generator of ASEP  is 
\be \begin{aligned}  L\varphi(\eta) &=
p\sum_{i\in\bZ} \eta_i(1-\eta_{i+1})[\varphi(\eta^{i,i+1})-\varphi(\eta)]\\
&\qquad +  q\sum_{i\in\bZ} \eta_i(1-\eta_{i-1})[\varphi(\eta^{i,i-1})-\varphi(\eta)]
\end{aligned} \label{tasepL}\ee
that acts on    cylinder functions $\varphi$ on the state space 
 $\{0,1\}^\bZ$
and  $\eta^{i,j}=\eta-\delta_i+\delta_{j}$ is the configuration that results from
moving one particle from site $i$ to $j$.  
Equation \eqref{tasepL1} can be derived 
from the graphical construction 
with some estimation.  

Invariance of a probability distribution can be checked by
a generator computation.   For ASEP it is enough to check that 
\be  \int L\varphi \,d\mu=0 \label{tasepinv}\ee
for  cylinder functions $\varphi$ to conclude that $\mu$ is invariant.  
This can be used  to check that Bernoulli measures $\nu^\rho$ are invariant for ASEP. 
\label{asepinvrem}\end{remark}

\medskip

{\bf Basic coupling and second class particles.}
The {\sl basic coupling} of two exclusion processes
$\eta$ and $\om$ means that they obey a common set
of Poisson clocks $\{N^{i\to i+1},N^{i\to i-1}\}$. 
Suppose the two processes $\eta$ and $\eta^+$ satisfy 
$\eta^+(0)=\eta(0)+\delta_{Q(0)}$ at time zero, for 
some position $Q(0)\in\bZ$. This means that $\eta^+_i(0)=\eta_i(0)$
for all $i\ne Q(0)$, $\eta^+_{Q(0)}(0)=1$ and $\eta_{Q(0)}(0)=0$.
  Then throughout the evolution in the basic coupling
there is a single discrepancy between $\eta(t)$ and  $\eta^+(t)$
at some position $Q(t)$:  $\eta^+(t)=\eta(t)+\delta_{Q(t)}$.
From the perspective of $\eta(t)$, $Q(t)$ is called a 
second class particle. By the same token,
from the perspective of $\eta^+(t)$, $Q(t)$ is a 
second class {\sl anti}particle.
  In particular, we shall call the 
pair $(\eta, Q)$ {\sl a $(p,q)$-ASEP with a second class particle}. 

We write a boldface $\Pv$ for the probability measure
when more than one process are coupled together. 
In particular, $\Pv^\rho$ represents the
situation where the initial occupation 
variables  $\eta_i(0)=\eta^+_i(0)$
are i.i.d.~mean-$\rho$ Bernoulli for $i\ne 0$, and the
second class particle $Q$ starts at $Q(0)=0$.   

More generally, if two processes $\eta$ and $\om$ are in basic
coupling and $\om(0)\ge\eta(0)$ (by which we mean coordinatewise
ordering $\om_i(0)\ge\eta_i(0)$ for all $i$) then the ordering
$\om(t)\ge\eta(t)$ holds for all $0\le t<\infty$.  The effect of
the basic coupling is to give priority to the $\eta$ particles
over the $\om-\eta$ particles. Consequently we can think
of the $\om$-process as consisting of  first class particles 
(the $\eta$ particles) and second class particles 
(the $\om-\eta$ particles).

\medskip

{\bf Current.} 
For $x\in\bZ$ and $t>0$,
$J_x(t)$ stands for the net left-to-right
particle current across the straight-line space-time path 
from $(1/2,0)$ to $(x+1/2,t)$.  More precisely, 
$J_x(t)=J_x(t)^+-J_x(t)^-$ where 
$J_x(t)^+$ is the number of particles that lie
in $(-\infty,0]$ at time $0$ but lie in 
$[x+1,\infty)$ at time $t$, while 
$J_x(t)^-$ is the number of particles that lie
in $[1,\infty)$ at time $0$ and  in 
 $(-\infty,x]$ at time $t$. When more than one process
($\om$, $\eta$, etc)
is considered in a coupling, the currents of the processes
are denoted by $J^\om_x(t)$, $J^\eta_x(t)$, etc. 

\subsection{Results}

The average net rate at which
particles in the stationary  $(p,q)$-ASEP at density $\rho$
 move across a fixed edge $(i,i+1)$ is the {\sl flux}
\be
\flux(\rho)= E^\rho [J_0(t)]=(p-q)\rho(1-\rho).
\label{def:flux}\ee
This formula follows from the fact that this process $M(t)$ is a mean zero martingale:
\be\begin{aligned}
 M(t) &=J_0(t) -\int_0^t \Bigl( p \ind\{\eta_0(s)=1,\,\eta_1(s)=0\} \\
&\qquad\qquad\qquad\qquad  - q  \ind\{\eta_1(s)=1,\,\eta_0(s)=0\}\Bigr)  
 \,ds.  
\end{aligned}  \label{asepmg}\ee
For the more general currents
 \be
E^\rho[J_x(t)]=t\flux(\rho)-x\rho  \qquad (x\in\bZ, t\ge 0)
\label{eq:EJ}\ee
as can be seen by noting that particles that crossed the
edge $(0,1)$  either also crossed $(x,x+1)$ and 
contributed to $J_x(t)$ or did not.

The {\sl characteristic speed} at density $\rho$ is 
\be
V^\rho=\flux'(\rho)=(p-q)(1-2\rho).
\label{def:Vrho}\ee

The derivation of the fluctuation bounds for the current rests on several key identities which we
collect in the next theorem. 

\begin{theorem}  Let the second class particle   start at the origin: $Q(0)=0$. 
For any density $0<\rho< 1$, 
$z\in\bZ$ and $t>0$ we have these formulas.    
\be
\Var^\rho[J_z(t)]=\sum_{j\in\bZ} \abs{j-z} \Cov^\rho[\eta_j(t),\eta_0(0)],  
 \label{goal1}\ee
\be
\Cov^\rho[\eta_j(t),\,\eta_0(0)]
=\rho(1-\rho) \Pv^\rho\{Q(t)=j\}, 
\label{eq:CovQ}\ee
and 
\be
\Ev^\rho[Q(t)]=V^\rho t.
\label{eq:EQH'a}\ee 
\label{asepthm1} \end{theorem}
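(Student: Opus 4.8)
The plan is to establish the three identities in the order \eqref{eq:CovQ}, \eqref{eq:EQH'a}, \eqref{goal1}, so that each may use the preceding ones. For \eqref{eq:CovQ} I would condition the stationary density-$\rho$ process — call it $\omega$ — on the occupation variable at the origin at time $0$. Since $\omega_0(0)\in\{0,1\}$, $E^\rho[\omega_j(t)\omega_0(0)]=\rho\,E^\rho[\,\omega_j(t)\mid\omega_0(0)=1\,]$. Given $\{\omega_0(0)=1\}$ the conditional law of $\omega(0)$ is product Bernoulli($\rho$) off the origin with a particle at the origin — exactly the law of $\eta^+(0)$ under $\Pv^\rho$ — and given $\{\omega_0(0)=0\}$ it is the law of $\eta(0)$. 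Because the dynamics does not see the initial condition, $E^\rho[\,\omega_j(t)\mid\omega_0(0)=1\,]=\Ev^\rho[\eta^+_j(t)]$ and $E^\rho[\,\omega_j(t)\mid\omega_0(0)=0\,]=\Ev^\rho[\eta_j(t)]$, whence $\rho=E^\rho[\omega_j(t)]=\rho\,\Ev^\rho[\eta^+_j(t)]+(1-\rho)\Ev^\rho[\eta_j(t)]$. Combining these, $\Cov^\rho[\omega_j(t),\omega_0(0)]=\rho\bigl(\Ev^\rho[\eta^+_j(t)]-\rho\bigr)=\rho(1-\rho)\,\Ev^\rho[\eta^+_j(t)-\eta_j(t)]$, and since the basic coupling preserves the single discrepancy, $\eta^+_j(t)-\eta_j(t)=\ind\{Q(t)=j\}$, which is \eqref{eq:CovQ}. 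As byproducts this gives $\sum_j\Cov^\rho[\eta_j(t),\eta_0(0)]=\rho(1-\rho)$ and, since $|Q(t)|$ is dominated by a count of Poisson-clock rings with finite mean, $\sum_j|j|\,\Cov^\rho[\eta_j(t),\eta_0(0)]=\rho(1-\rho)\,\Ev^\rho|Q(t)|<\infty$; both facts are needed below.

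For \eqref{eq:EQH'a}, multiplying \eqref{eq:CovQ} by $j$ and summing reduces the claim to $\sum_j j\,\Cov^\rho[\eta_j(t),\eta_0(0)]=\rho(1-\rho)V^\rho t$, which I would prove by differentiating in $t$. The Kolmogorov equation gives $\tfrac{d}{dt}E^\rho[\eta_j(t)\eta_0(0)]=E^\rho[(L\eta_j)(t)\,\eta_0(0)]$; the elementary identity $\sum_j j\,L\eta_j=p\sum_k\eta_k(1-\eta_{k+1})-q\sum_k\eta_k(1-\eta_{k-1})$ (the total instantaneous current) then turns the weighted sum into $p\sum_m\Cov^\rho[\eta_0(1-\eta_1)(t),\eta_m(0)]-q\sum_m\Cov^\rho[\eta_0(1-\eta_{-1})(t),\eta_m(0)]$. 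Each such spatial sum is independent of $t$: for a local function $g$ one has $\sum_m\Cov^\rho[g(t),\eta_m(0)]=\langle P_t g,\,N\rangle_{\nu^\rho}$ with $N=\sum_m(\eta_m-\rho)$, and because the time-reversed dynamics (ASEP with $p$ and $q$ interchanged) still conserves particle number, $\langle Lh,N\rangle_{\nu^\rho}=\langle h,L^*N\rangle_{\nu^\rho}=0$. Evaluating at $t=0$ under $\nu^\rho$ gives $\rho(1-\rho)(1-2\rho)$ for each sum, so the $t$-derivative equals $(p-q)\rho(1-\rho)(1-2\rho)=\rho(1-\rho)V^\rho$; since the sum vanishes at $t=0$, \eqref{eq:EQH'a} follows. (Equivalently one can differentiate $\Ev^\rho[Q(t)]$ directly via the jump rates of $Q$ in the basic coupling, obtaining $\tfrac{d}{dt}\Ev^\rho[Q(t)]=(p-q)\bigl(1-\Ev^\rho[\eta_{Q(t)-1}(t)]-\Ev^\rho[\eta_{Q(t)+1}(t)]\bigr)$, and then one must show the bracket equals $1-2\rho$ — the same content.) The delicate point is justifying the interchange of the infinite sums with $\tfrac{d}{dt}$, which uses the correlation decay from \eqref{eq:CovQ}.

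The identity \eqref{goal1} is the hardest. The plan is to express the current through occupation variables, $J_z(t)=\sum_{i>z}\eta_i(t)-\sum_{i>0}\eta_i(0)$ (with the genuinely finite identity $\eta_j(t)=J_{j-1}(t)-J_j(t)$ underneath), and to compute $\Var^\rho[J_z(t)]=\Cov^\rho\bigl[J_z(t),\sum_{i>z}\eta_i(t)\bigr]-\Cov^\rho\bigl[J_z(t),\sum_{i>0}\eta_i(0)\bigr]$ using two structural inputs: the occupation variables are i.i.d.\ at each fixed time (invariance of $\nu^\rho$) and the process is spatially translation invariant. The bookkeeping, written with $C(j):=\Cov^\rho[\eta_j(t),\eta_0(0)]$, collapses the double sums to $\Var^\rho[J_z(t)]=\sum_{j>z}(j-z)C(j)+\sum_{j<z}(z-j)C(j)=\sum_j|j-z|\,C(j)$; here the sum rule $\sum_jC(j)=\rho(1-\rho)$ is precisely what cancels the otherwise divergent diagonal contributions and $\sum_j|j|\,C(j)<\infty$ is what leaves a finite answer. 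The genuine obstacle is rigor: the series $\sum_{i>z}\eta_i(t)-\sum_{i>0}\eta_i(0)$ is \emph{not} the limit of its partial sums — every edge of the lattice carries a net current of order $t$ — so I would make the manipulations precise by working with the honest finite identities $J_a(t)-J_b(t)=\sum_{a<j\le b}\eta_j(t)$, referring $J_z(t)$ to a remote reference point, and supplying an a priori bound on how far particles travel by time $t$ so that the boundary contributions are shown to vanish in $L^2$.
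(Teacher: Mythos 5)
Your proofs of \eqref{eq:CovQ} and \eqref{goal1} are essentially the paper's: for \eqref{eq:CovQ} you condition on $\omega_0(0)\in\{0,1\}$, identify the conditional laws with those of $\eta^+$ and $\eta$ under $\Pv^\rho$, and read off $\rho(1-\rho)\Ev^\rho[\eta^+_j(t)-\eta_j(t)]$; for \eqref{goal1} you expand the variance via the counting variables $I_\pm$ and control the divergent tails by working in finite volume and sending the truncation to infinity, which is exactly the $\omega^N$ argument in the text. So on those two items there is nothing to add.

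Your treatment of \eqref{eq:EQH'a} is genuinely different from the paper's. The paper differentiates the finite-volume mean current $E[J^N_z(t)]$ in the \emph{density} $\rho$: the number of particles $I^N$ is Binomial$(2N+1,\rho)$, which makes $\tfrac{d}{d\rho}E[J^N_z(t)]=\tfrac1{\rho(1-\rho)}\Cov[J^N_z(t),I^N]$; expanding the covariance and comparing with the expansion used for \eqref{goal1} (this time with a sign flip producing $\sum_j(j-z)C(j)$ instead of $\sum_j|j-z|C(j)$) gives $\tfrac{d}{d\rho}E^\rho[J_z(t)]=\Ev^\rho[Q(t)]-z$ after the $N\to\infty$ limit and an appeal to \eqref{eq:CovQ}; combining with the explicit formula $E^\rho[J_z(t)]=t\flux(\rho)-z\rho$ then yields \eqref{eq:EQH'a}. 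You instead differentiate in \emph{time}: $\tfrac{d}{dt}\sum_j jC(j)$ is reduced by the generator identity $\sum_j j\,L\eta_j=p\sum_k\eta_k(1-\eta_{k+1})-q\sum_k\eta_k(1-\eta_{k-1})$ to two spatially summed covariances, which you argue are $t$-independent by a conservation-of-charge duality and then evaluate at $t=0$. The trade-off is roughly: the paper's route avoids any conservation-law manipulation on the infinite lattice (the Binomial identity does that work for free, in finite volume) but must input the known formula for $\flux(\rho)$ at the last step; your route derives $\flux'(\rho)=(p-q)(1-2\rho)$ as a by-product rather than assuming it, but leans on the formal statement $\langle Lh,N\rangle_{\nu^\rho}=0$ with $N=\sum_m(\eta_m-\rho)$, which is not literally a pairing in $L^2(\nu^\rho)$ and is no less delicate to rigorize than the $N\to\infty$ limits the paper already takes; you acknowledge this. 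If you carry it out, I would make that step honest the same way the paper handles \eqref{goal1}: prove the $t$-independence of $\sum_m\Cov^\rho[g(t),\eta_m(0)]$ for a local $g$ by truncating to $\omega^N$, where particle number really is conserved, and controlling the boundary covariances by the exponential decay already invoked in the paper's proof of \eqref{goal1}.
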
  

Formulas \eqref{goal1} and \eqref{eq:CovQ} combine to give 
\be 
\Var^\rho[J_{z}(t)]
=\rho(1-\rho)\Ev^\rho\lvert Q(t)-z\rvert.\label{VarJQ}\ee
In particular, for the current across the characteristic,  
\be 
\Var^\rho[J_{\fl{V^\rho t}}(t)]
=\rho(1-\rho)\Ev^\rho\lvert Q(t)-\fl{V^\rho t}\rvert.\label{VarJQa}\ee
Thus to get variance bounds on the current, we derive moment bounds on the 
second class particle. 
 
We  now state the main result, the moment bounds on the
second class particle.  It is of interest to  see how the bounds depend
on the bias $\bias=p-q$ so we include that in the estimates.

\begin{theorem}  There exist constants $0<c_0,C<\infty$ such 
that, for all $0<\bias<1/2$, $0<\rho<1$, $1\le m<3$, and $t\ge c_0\bias^{-4}$, 
\be \frac1{C}\bias^{2m/3}t^{2m/3} \le 
 \Ev^\rho  \bigl[\,\abs{Q(t)-V^\rho t}^m \,\bigr]\le \frac{C}{3-m}\bias^{2m/3}t^{2m/3}.   
\label{Qmom}\ee
For the upper bound the constants are fixed for all values of the parameters. For the 
lower bound both constants $c_0,C$ depend on the density $\rho$. 
\label{Qmomthm}\end{theorem}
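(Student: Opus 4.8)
The plan is to route everything through the identities of Theorem~\ref{asepthm1}. By \eqref{VarJQ} the case $m=1$ of \eqref{Qmom} at $z=\fl{V^\rho t}$ is exactly the pair of two-sided bounds $c\,\bias^{2/3}t^{2/3}\le\Var^\rho\bigl[J_{\fl{V^\rho t}}(t)\bigr]\le C\bias^{2/3}t^{2/3}$ for the variance of the current across the characteristic, so the real work is to prove these. The remaining moments follow from the $m=1$ case plus one extra ingredient each. For the lower bound, Jensen applied to $x\mapsto x^m$ gives $\Ev^\rho|Q(t)-V^\rho t|^m\ge\bigl(\Ev^\rho|Q(t)-V^\rho t|\bigr)^m$, so $m=1$ suffices. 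For the upper bound with $1<m<3$ one needs, besides the first-moment estimate, a polynomial tail bound $\Pv^\rho\{|Q(t)-V^\rho t|>u\}\le C(\bias t)^2u^{-3}$ for $u\ge(\bias t)^{2/3}$: writing $\Ev^\rho|Q(t)-V^\rho t|^m=\int_0^\infty mu^{m-1}\Pv^\rho\{|Q(t)-V^\rho t|>u\}\,du$ and cutting at $u_0=(\bias t)^{2/3}$, the part $u\le u_0$ is at most $u_0^m=(\bias t)^{2m/3}$ while the part $u\ge u_0$ is at most $Cm(\bias t)^2\int_{u_0}^\infty u^{m-4}\,du=\frac{Cm}{3-m}(\bias t)^{2m/3}$, which is precisely the asserted bound, the blow-up factor $\frac1{3-m}$ included. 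Thus it remains to establish (a) the first-moment upper bound, uniformly in $\rho$; (b) the polynomial tail bound; (c) the first-moment lower bound, with a $\rho$-dependent constant.

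\textbf{The upper bound (the main obstacle).} This is the heart of the proof and follows the Bal\'azs--Sepp\"al\"ainen bootstrap. Set $\chi=\rho(1-\rho)$ and $R(t)=\Ev^\rho|Q(t)-\fl{V^\rho t}|=\chi^{-1}\Var^\rho[J_{\fl{V^\rho t}}(t)]$. One starts from a crude a priori estimate: the second class particle jumps at total rate at most $p+q=1$, so by \eqref{goal1} and $\Ev^\rho|Q(t)|\le\Ev^\rho|Q(t)-Q(0)|\le t$ one gets $R(t)\le Ct$, and more generally $\Var^\rho[J_x(t)]\le C(|x|+t)$. The self-improvement rests on the following mechanism, to be made rigorous through the basic coupling: the second class particle rides a characteristic emanating from its locally equilibrated surroundings, and since $\flux''(\rho)=-2\bias$, a local density deviation $\delta$ over the window of width $\asymp R(t)$ that $Q$ explores produces a displacement of order $\bias t\,\delta$, while the typical such deviation has size $(\chi/R(t))^{1/2}$. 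Making this quantitative by comparing, via couplings relating the density-$\rho$ process to the stationary processes at densities $\rho'$ near $\rho$, the second class particle of one with those of the others, and feeding in the a priori bound, one arrives at an inequality of the schematic form $R(t)\le C_1+C_2\,\bias t\,R(t)^{-1/2}$, i.e. $R(t)^{3/2}\le C_1R(t)^{1/2}+C_2\bias t$; together with $R(t)\le Ct$ this forces $R(t)\le C(\bias t)^{2/3}$ once $t$ is large relative to $\bias^{-4}$, with $C$ independent of $\rho$ (the factor $(\chi/R)^{1/2}$ contributes only $\chi^{1/3}\le1$). The genuinely hard points, where essentially all the effort goes, are: (i) tracking the $\bias$-dependence of the a priori estimate and of every coupling error so the final constant is uniform over $\bias\in(0,1/2)$ and the estimates reach their asymptotic regime, which is what the hypothesis $t\ge c_0\bias^{-4}$ encodes; and (ii) the comparison estimates between second class particles at nearby densities, the technical core of the method.

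\textbf{Tail bound and higher moments.} Once the first-moment upper bound is available, $\Var^\rho[J_x(t)]=\chi\,\Ev^\rho|Q(t)-x|\le\chi\bigl(C(\bias t)^{2/3}+|x-V^\rho t|\bigr)$ for every $x$, and $x\mapsto\Var^\rho[J_x(t)]$ is convex in $x$ because $x\mapsto|j-x|$ is, by \eqref{goal1}. The tail of $Q(t)$ is the discrete derivative of this function: \eqref{goal1} gives $\Var^\rho[J_x(t)]-\Var^\rho[J_{x-1}(t)]=\chi\bigl(1-2\Pv^\rho\{Q(t)\ge x\}\bigr)$, so the tail bound amounts to showing this increment is within $C(\bias t)^2u^{-3}$ of $\chi$ when $x=V^\rho t+u$ with $u\ge(\bias t)^{2/3}$. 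I would get this by combining the convexity of $\Var^\rho[J_\cdot(t)]$ with the bootstrap bound applied at a shifted density $\rho'$ with $V^{\rho'}t$ near $x$ (so $\rho'-\rho\asymp u/(\bias t)$), which pins $\Var^\rho[J_x(t)]$ to $\chi|x-V^\rho t|$ up to an error of the required order, and hence forces $\Pv^\rho\{Q(t)\ge x\}\le C(\bias t)^2u^{-3}$; the two-sided tail and the bound on all moments $1\le m<3$ then follow from the reduction in the first paragraph.

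\textbf{The lower bound.} With the fluctuation scale now known to be $(\bias t)^{2/3}$, choose a second density $\rho'$ with $|\rho'-\rho|\asymp(\bias t)^{-1/3}$ and couple the stationary density-$\rho$ and density-$\rho'$ processes. Their characteristics separate by $|V^{\rho'}-V^\rho|\,t\asymp\bias|\rho'-\rho|\,t\asymp(\bias t)^{2/3}$, while by the upper bound each second class particle stays within $O((\bias t)^{2/3})$ of its own characteristic. A suitable comparison of the two currents — equivalently, the observation that the stationary density over a window of size $(\bias t)^{2/3}$ fluctuates on the scale $(\bias t)^{-1/3}$, an order-one amount of ``density uncertainty'' for the second class particle — then yields $\Ev^\rho|Q(t)-V^\rho t|\ge c(\rho)(\bias t)^{2/3}$, with the constant degenerating through $\chi$ as $\rho\to0,1$, exactly as in the statement. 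Assembling (a), (b), (c) with the reductions above gives \eqref{Qmom}.
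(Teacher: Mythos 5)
Your outer reductions are sound: for $1\le m<3$, integrating $\int_0^\infty mu^{m-1}\Pv^\rho\{|Q(t)-V^\rho t|>u\}\,du$ with a cut at $u_0\asymp(\bias t)^{2/3}$ and a tail of order $(\bias t)^2u^{-3}$ gives exactly the $\frac{C}{3-m}(\bias t)^{2m/3}$ upper bound, and Jensen reduces the lower bound to $m=1$; both steps are also used in the paper. Likewise \eqref{VarJQ} correctly converts the $m=1$ case into a statement about $\Var^\rho[J_{\fl{V^\rho t}}(t)]$. But the entire technical content of the theorem is compressed into the unsubstantiated schematic $R(t)^{3/2}\le C_1R(t)^{1/2}+C_2\bias t$, and that is where the proof actually lives. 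The paper does not arrive at a self-referential inequality of that polynomial shape; instead it proves a tail bound (Lemma~\ref{lm:UB1}, extended in Lemma~\ref{lm:UB2}) of the form $\Pv^\rho\{|Q(t)-V^\rho t|\ge u\}\le C\bias^2\bigl(\tfrac{t^2}{u^4}\Psi(t)+\tfrac{t^2}{u^3}\bigr)+\text{(exp.\ small)}$ for $u\ge B\bias^{2/3}t^{2/3}$, integrates it, and closes the loop by choosing $B$ so large that the $\Psi(t)$-term on the right carries a coefficient strictly less than one. Deriving that tail bound requires the microscopic-concavity coupling of Theorem~\ref{th:newcoupling}, the random-walk estimate of Lemma~\ref{lm:RW}, and the label bound of Lemma~\ref{lm:mQ}, together with careful variance bookkeeping for stationary currents at the two densities $\rho$ and $\lambda=\rho-u/(10\bias t)$; none of this appears in your sketch except as a flagged ``technical core,'' so the argument has a genuine gap precisely where the work is.

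There is also a logical ordering problem in your alternative route to the tail bound. You propose to prove the first-moment bound $R(t)\le C(\bias t)^{2/3}$ first, and then deduce the polynomial tail from convexity of $x\mapsto\Var^\rho[J_x(t)]$ by comparing against a shifted density $\rho'$. The convexity observation and the difference identity $\Var^\rho[J_x(t)]-\Var^\rho[J_{x-1}(t)]=\chi(1-2\Pv^\rho\{Q(t)\ge x\})$ are both correct, but pinning $\Var^\rho[J_x(t)]$ at $x$ away from the characteristic by appealing to the $\rho'$-system requires relating $\Var^\rho[J_x]$ to $\Var^{\rho'}[J_x]$, which is the same flavor of coupling comparison that is being treated as a black box. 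In the paper the order is reversed --- the tail estimate, with $\Psi(t)$ appearing on its right-hand side, is the primitive, and the first-moment bound follows by integration --- so the first-moment bound cannot be taken as an input to the tail estimate without a different argument. The lower-bound sketch is in the right spirit, but it omits the key Radon--Nikodym/Cauchy--Schwarz step (passing from the perturbed initial configuration of the coupled system to a genuine stationary $P^\rho$ probability) that the paper uses to convert a statement about the inhomogeneous $\zeta$-process into one about the stationary current; that step, and the choice of parameters $a_1,a_2,b$ that balances against the already-proved upper bound, is what makes the lower bound rigorous.
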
 
As a corollary for $m=1$, we obtain the bounds for the variance of the current 
seen by an observer traveling at the 
characteristic speed $V^\rho$: for  
 $t\ge c_0(\rho)\bias^{-4}$, 
\be
C_1(\rho)\bias^{1/3}t^{2/3} \le \Var^\rho[J_{\fl{V^\rho t}}(t)] 
\le C_2 \bias^{1/3} t^{2/3}.
\label{asepJvar}\ee

 It follows from the variance bound \eqref{asepJvar}   that for
 $v\ne V^\rho$ a Gaussian limit in the central limit scale holds: 
 \be    \frac{ J_{[tv]}(t) - E^\rho( J_{[tv]}(t))}{t^{1/2}} \limd\chi  \ee
 for a centered normal random variable $\chi$. 
To observe this,  take the case $v>V^\rho$.  Let $J^*$ be the current across
 the straight-line space-time path from $((v-V^\rho)t,0)$ to $(vt,t)$.  This current has
 variance of order $t^{2/3}$.  Then use 
\[  J^* =  J_{[tv]}(t) + \sum_{i=1}^{(v-V^\rho)t}  \eta_i(0).  \]

A distributional limit exists for the current for the case of the
stationary totally
asymmetric simple exclusion process (TASEP). We state the result here.
 In TASEP particles march only
to the right (say), and so $p=1$ and $q=0$. 

\begin{theorem} \cite{ferr-spoh-06} In stationary TASEP, the following distributional convergence holds:
\be 
\lim_{t\to\infty} P^\rho\biggl\{ \, \frac{J_{\fl{V^\rho t}}(t) - \rho^2t}
{\rho^{2/3}(1-\rho)^{2/3}t^{1/3}} \le x \biggr\} =F_{0}(x) 
\label{taseptwlim}\ee
\label{taseptwthm}\end{theorem}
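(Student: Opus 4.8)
The plan is to prove \eqref{taseptwlim} by reducing the stationary TASEP current to a last-passage percolation (LPP) problem with boundary sources, for which an exact determinantal formula is available, and then performing a steepest-descent analysis of that formula; the coupling identities of the previous subsections (in particular $\Ev^\rho[Q(t)]=V^\rho t$ and \eqref{VarJQa}) explain why the characteristic shift $x=\fl{V^\rho t}$ is the distinguished direction, but they only pin down the exponent $1/3$ and not the law, so exact solvability is unavoidable. First I would set up the TASEP--LPP dictionary: placing i.i.d.\ $\mathrm{Exp}(1)$ weights $w_{i,j}$ on the bulk of the quadrant $\bN^2$, with boundary weights $\mathrm{Exp}(1-\rho)$ along one axis and $\mathrm{Exp}(\rho)$ along the other, encodes exactly the product-Bernoulli$(\rho)$ initial data of the stationary process (the left half-line of particles and the right half-line of holes become the two coordinate axes). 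Writing $G(m,n)=\max_{\pi}\sum_{(i,j)\in\pi}w_{i,j}$ for the last-passage time over up-right paths from $(1,1)$ to $(m,n)$, an event $\{J_{\fl{V^\rho t}}(t)\ge k\}$ translates into an event $\{G(m,n)\le t\}$ with $m,n,k$ tied to $t,\rho$ and the shift; crucially the characteristic shift is precisely the one for which the pair $(m,n)$ lands on the diagonal where the two boundary sources are critically tuned and the Baik--Rains distribution $F_0$ governs the fluctuations of $G$.

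Second, I would invoke the exact solvability of this boundary LPP model. Via the RSK correspondence and the associated Schur-measure description with two sets of parameters — equivalently, via the determinantal point-process representation of step-Bernoulli TASEP — the probability $\P\{G(m,n)\le t\}$ has a closed expression as a Fredholm determinant of an explicit kernel (a bulk Airy-type part plus a finite-rank correction carrying the boundary weights), in the form obtained by Pr\"ahofer--Spohn and Baik--Rains. This is the step with no soft substitute. Third comes the asymptotics: take $n\sim m$ along the characteristic, introduce the scaling $t=\rho^2\cdot(\mathrm{size})+\rho^{2/3}(1-\rho)^{2/3}(\mathrm{size})^{1/3}x$ dictated by \eqref{asepJvar}, and carry out a saddle-point analysis of the exact formula. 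One shows that the relevant double contour integrals concentrate at a coalescing pair of critical points; the bulk contribution produces the Airy kernel, while the two boundary sources — sitting exactly at the edge of the spectral curve in the characteristic direction — contribute a non-vanishing rank-one perturbation whose net effect at the critically tuned point is precisely the modification turning $F_{\mathrm{GUE}}$ into $F_0$. Uniform tail bounds on $G(m,n)$ (to justify convergence of the Fredholm determinants and to discard atypically large last-passage values) close the argument, and unwinding the dictionary returns the centering $\rho^2 t=E^\rho[J_{\fl{V^\rho t}}(t)]+O(1)$ and the scale $\rho^{2/3}(1-\rho)^{2/3}t^{1/3}$ of \eqref{taseptwlim}.

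I expect the third step to be the main obstacle. The bulk part of the steepest-descent analysis is by now classical — it is the analysis underlying Johansson's GUE limit for LPP — but here the two boundary sources coincide with the edge of the limit shape in the characteristic direction, so the critical point is doubly degenerate and the perturbation is not small. Keeping careful track of this degeneracy, and showing that it produces $F_0$ rather than $F_{\mathrm{GUE}}$, is the delicate analytic heart of the Ferrari--Spohn theorem.

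A secondary technical point is that the stationary initial data is two-sided (full-line), not a half-line version: one must combine the contributions of particles on one axis and holes on the other consistently, so that the critical tuning of the sources is respected. This is bookkeeping rather than analysis, but it must be carried out precisely, since getting the two boundary parameters right is exactly what selects $F_0$ at the characteristic. With the exact formula and these asymptotics in hand, \eqref{taseptwlim} follows; I would then simply cite \cite{ferr-spoh-06} for the details.
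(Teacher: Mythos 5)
The paper does not prove Theorem \ref{taseptwthm}. It is stated purely for context, with a citation to \cite{ferr-spoh-06}, and the text immediately after says explicitly that it ``will not be discussed further'' before moving on to the couplings that prove Theorems \ref{asepthm1} and \ref{Qmomthm}. So there is no proof in the paper to compare your attempt against; the honest answer to ``prove this from the paper'' is ``cite Ferrari--Spohn,'' which is in fact what you do at the end.

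As a sketch of the Ferrari--Spohn argument, your account is broadly in the right direction: the dictionary between stationary TASEP (product Bernoulli$(\rho)$) and an exponential LPP model on the quadrant with two families of boundary weights with rates $\rho$ and $1-\rho$, the determinantal/Schur-measure exact solvability, and a steepest-descent analysis with a degenerate critical point in the characteristic direction are all genuine ingredients. One structural point you gloss over, though, and it is the one the paper itself flags via the formula $F_{0}(x)=(\partial/\partial x)\bigl(F_{\text{GUE}}(x)\,g(x,0)\bigr)$, is that Ferrari--Spohn do \emph{not} obtain $F_0$ directly from a single steepest-descent analysis of a boundary-sourced Fredholm determinant in which the boundary terms appear as a ``rank-one perturbation that survives the limit.'' The critical tuning of the two sources makes the naive contour integrals degenerate, and the way around this (going back to Baik--Rains and Pr\"ahofer--Spohn) is a shift-and-differentiate identity: one perturbs the boundary parameters off criticality, where the limiting distribution is accessible, and then recovers the critical stationary case as a derivative in the perturbation parameter. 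That differentiation is precisely what produces the $\partial_x$ in the definition of $F_0$; it is not a cosmetic rewriting but the mechanism by which the degenerate critical point is handled. If you want your sketch to reflect the actual proof rather than a plausible-looking alternative, you should replace ``the boundary sources contribute a non-vanishing rank-one perturbation'' with this differentiation argument. Everything else --- the centering $\rho^2 t$, the scale $\rho^{2/3}(1-\rho)^{2/3}t^{1/3}$, and the role of the characteristic direction in putting the observation point on the shock/rarefaction boundary --- is consistent with \eqref{def:flux}, \eqref{eq:EJ}, \eqref{VarJQa}, and \eqref{asepJvar}.
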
 
The distribution function $F_{0}$ above is defined in \cite{ferr-spoh-06}  as
$F_{0}(x) =(\partial/\partial x)(F_{\text{GUE}}(x)g(x,0))$  where $F_{\text{GUE}}$ is
the Tracy-Widom GUE distribution and $g$ a certain scaling function.  

Theorem \ref{taseptwthm}  will not be discussed further,
and we turn to proofs of   Theorem \ref{asepthm1} and  
  Theorem \ref{Qmomthm}.  In the next section we give partial proofs of the identities in
Theorem \ref{asepthm1}.  Section \ref{coupsec}  describes a coupling that we use
to control second class particles, and a random walk bound that comes in handy.
The last two sections of this chapter prove the upper and lower bounds of Theorem
\ref{Qmomthm}.   

\subsection{Proofs for the  identities}

Let $\om$ be a stationary exclusion process  with i.i.d.~Bernoulli($\rho$) distributed
occupations $\{\om_i(t)\}$ at any fixed time $t$. 

\begin{proof}[Proof of equation \eqref{goal1}]
This is partly a hand-waiving proof.  What is missing is justification for certain limits.

To approximate the infinite system with
finite systems, for each $N\in\bN$ 
let process $\om^N$ have initial configuration
\be
\om^N_i(0)=\om_i(0) \ind_{\{-N\le i\le N\}}.
\label{Ninitial}\ee 
We assume that all these processes are coupled through
common  Poisson clocks.
Let $J^N_z(t)$ denote the current in process $\om^N$.

Let $z(0)=0$, $z(t)=z$, and introduce the counting variables
\be
\pcount^N_+(t)=\sum_{n> z(t)}\om^N_n(t)\,,\quad
\pcount^N_-(t)=\sum_{n\le z(t)}\om^N_n(t).
\label{defpcount}\ee
Then the current can be expressed as 
\[
J_z^N(t)= \pcount^N_+(t)-\pcount^N_+(0)=\pcount^N_-(0)-\pcount^N_-(t),
\]
and its variance as 
\begin{align*}
\Var J_z^N(t) &=\Cov\bigl(\pcount^N_+(t)-\pcount^N_+(0), 
\,\pcount^N_-(0)-\pcount^N_-(t)\bigr)\\
&=\Cov\bigl(\pcount^N_+(t), \pcount^N_-(0))  
+ \Cov(\pcount^N_+(0),\pcount^N_-(t))\\
&\qquad -\Cov(\pcount^N_+(0),\pcount^N_-(0)) 
-\Cov(\pcount^N_+(t),\pcount^N_-(t)).  
\end{align*}

Independence of initial occupation variables  gives
\[\Cov(\pcount_+^N(0),\pcount_-^N(0))=0\] and the identity
above simplifies
to
\be
\begin{split}
&\Var J_z^N(t)=\Cov\bigl(\pcount^N_+(t), \pcount^N_-(0))  
+ \Cov(\pcount^N_+(0),\pcount^N_-(t))
\\[5pt]   &\qquad\qquad\qquad   
-\Cov(\pcount^N_+(t),\pcount^N_-(t)) \\[5pt]
 &=
\sum_{k\le 0,\,m>z}  \Cov[\om^N_m(t),\om^N_k(0)]\\
&\qquad + \sum_{k\le z,\,m>0}  \Cov[\om^N_k(t),\om^N_m(0)]
-\Cov(\pcount_+^N(t),\pcount_-^N(t)).
\end{split}
\label{simplified}\ee 
In the $N\to\infty$ limit variables  $\om^N_i(t)$ converge (a.s.\ and in $L^2$)
to the i.i.d.\ occupation variables  $\om_i(t)$ of the 
 stationary process.  
 It follows from the graphical construction that on a fixed time interval 
  covariances can be bounded exponentially, uniformly over $N$: for a fixed $0<t<\infty$, 
 \[  \abs{\,\Cov[\om^N_m(t),\om^N_k(s)]\,} \le Ce^{-c_1\abs{m-k}}
 \quad \text{for $s\in[0,t]$.} \]
   Hence in the limit the last covariance in \eqref{simplified} vanishes.  Furthermore, 
$J^N_z(t)\to J_z(t)$  similarly, so in the limit we get 
\begin{align}
\Var J_z(t)&= \sum_{k\le 0,\, m>z}  \Cov[\om_m(t),\om_k(0)]
+ \sum_{k\le z,\, m>0}  \Cov[\om_k(t),\om_m(0)] \nn 
\\
&=\sum_{n\in\bZ} \abs{n-z} \Cov[\om_n(t),\om_0(0)]. \nn
\end{align}
This proves equation \eqref{goal1}.  
\end{proof}

\begin{proof}[Proof of equation \eqref{eq:CovQ}]
This is a straight-forward calculation. 
\begin{align*}
&\Cov^\rho[\om_j(t),\om_0(0)]
=E^\rho[\om_j(t)\om_0(0)]-\rho^2\\[4pt]
&=\rho E^\rho[\om_j(t)\,\vert\,\om_0(0)=1]-\rho E^\rho[\om_j(t)] \\
&=\rho\Bigl( E^\rho[\om_j(t)\,\vert\,\om_0(0)=1]
-\rho E^\rho[\om_j(t)\,\vert\,\om_0(0)=1]\\
&\qquad\qquad\qquad\qquad - \; (1-\rho) E^\rho[\om_j(t)\,\vert\,\om_0(0)=0]\Bigr)\\
&=\rho(1-\rho)\Bigl( E^\rho[\om_j(t)\,\vert\,\om_0(0)=1]
- E^\rho[\om_j(t)\,\vert\,\om_0(0)=0]\Bigr)\\
&=\rho(1-\rho)\bigl(  \Ev^\rho[\om^+_j(t)] -  \Ev^\rho[\om_j(t)]\bigr)
=\rho(1-\rho)  \Ev^\rho[\om^+_j(t) -  \om_j(t)]\\
&=\rho(1-\rho) \Pv^\rho[ Q(t)=j].
\qedhere\end{align*}
\end{proof} 

   \begin{proof}[Proof of equation \eqref{eq:EQH'a}]  

 Let again  $\om^N$ be  the finite process with
initial condition \eqref{Ninitial}.  
 Let $I^N=\sum_i\om^N_i(t)$ be the number of particles in the
process $\om^N$. $I^N$ is a Binomial($2N+1,\rho$) random variable. 
For $0<\rho<1$
\begin{align}
&\frac{d}{d\rho} E[J_z^N(t)] =
\frac{d}{d\rho} \sum_{m=0}^{2N+1} \binom{2N+1}{m} 
\rho^m(1-\rho)^{2N+1-m} E[J_z^N(t)\vert I^N=m] \nn\\
&=
 \sum_{m=0}^{2N+1} P(I^N=m)
\Bigl(\frac{m}{\rho} -\frac{2N+1-m}{1-\rho}\Bigr) E[J_z^N(t)\vert I^N=m] \nn\\
&=\frac1{\rho(1-\rho)} E\Bigl[ J_z^N(t)  \bigl(I^N-(2N+1)\rho\bigr)\Bigr]\nn\\
&=\frac1{\rho(1-\rho)} \Cov\bigl[ \pcount^N_+(t)-\pcount^N_+(0)\,,\,
\pcount^N_-(0)+\pcount^N_+(0)\bigr]\nn\\
&=\frac1{\rho(1-\rho)}\Bigl( \Cov\bigl[ \pcount^N_+(t)\,,\,
\pcount^N_-(0)\bigr]+\Cov\bigl[ \pcount^N_+(t)-\pcount^N_+(0)\,,\,
\pcount^N_+(0)\bigr]\Bigr). 
 \label{diffJline4} 
\end{align}
The last equality used $\Cov[\pcount^N_+(0),\pcount^N_-(0)]=0$
that comes from the i.i.d.\ distribution of initial occupations. 
The first covariance on line \eqref{diffJline4} write directly
as 
\[
\Cov\bigl[ \pcount^N_+(t)\,,\,
\pcount^N_-(0)\bigr]
= 
\sum_{k\le 0,\,m>z}  \Cov[\om^N_m(t),\om^N_k(0)]. 
\]
The second 
 covariance on line \eqref{diffJline4} write as 
\begin{align*} 
&\Cov\bigl[ \pcount^N_+(t)-\pcount^N_+(0)\,,\,
\pcount^N_+(0)\bigr]
= \Cov\bigl[ \pcount^N_-(0)-\pcount^N_-(t)\,,\,
\pcount^N_+(0)\bigr]\\
&\qquad =\;-\Cov\bigl[ \pcount^N_-(t)\,,\,
\pcount^N_+(0)\bigr]
=\ - \sum_{k\le z,\, m>0}  \Cov[\om^N_k(t),\om^N_m(0)]. 
\end{align*}
Inserting these back on line  \eqref{diffJline4} gives
\begin{align*}
\frac{d}{d\rho} E[J_z^N(t)] \; &= \; 
\frac1{\rho(1-\rho)}\Bigl( \;
\sum_{k\le 0,\,m>z}  \Cov[\om^N_m(t),\om^N_k(0)] \\
& \qquad  \; - \; 
\sum_{k\le z,\, m>0}  \Cov[\om^N_k(t),\om^N_m(0)] \;
\Bigr).
\end{align*}
Compared to line \eqref{simplified} we have the difference
instead of the sum.  Integrate over the density $\rho$ and
take $N\to\infty$, as was taken in \eqref{simplified},    to obtain
\begin{align*}
E^\rho[J_z(t)]-E^\la[J_z(t)] &= \int_\la^\rho \frac1{\theta(1-\theta)}
\sum_{j\in\bZ} (j-z) \Cov^\theta[\om_j(t),\,\om_0(0)]\,d\theta \\
&= \int_\la^\rho 
(\Ev^\theta[Q(t)]-z)\,d\theta 
\end{align*}
for $0<\lambda<\rho<1$.
Couplings show the continuity of these expectations:
\be
E^\la[J_z(t)]\to E^\rho[J_z(t)]
\quad\text{and}\quad
\Ev^\lambda[Q(t)]\to \Ev^\rho[Q(t)]
\label{cont}\ee
 as $\lambda\to\rho$ in $(0,1)$. Thus the identity above  can be
differentiated in $\rho$.  With $z=0$ and via \eqref{def:flux}  
identity \eqref{eq:EQH'a}  follows.  
\end{proof}

\subsection{A coupling and a random walk bound}
\label{coupsec} 

As observed in \eqref{eq:EQH'a} the mean speed of the second class particle 
in a density-$\rho$ ASEP is $\flux'(\rho)$.  
Thus by the concavity of  $\flux$ 
 a defect travels on average slower  in a denser system (recall that 
we assume $p>q$ throughout).  However,
  the basic coupling does not respect this, except in the
totally asymmetric ($p=1, q=0$) case.  To see this,
consider two pairs of
processes $(\om^+,\om)$ and $(\eta^+,\eta)$ such that both
pairs have one discrepancy: $\om^+(t)=\om(t)+\delta_{Q^\om(t)}$
and $\eta^+(t)=\eta(t)+\delta_{Q^\eta(t)}$.  Assume that 
$\om(t)\ge \eta(t)$. 
In basic coupling  the jump from state
\[
\begin{bmatrix}  \om^+_i&\om^+_{i+1}\\
\om_i&\om_{i+1}\\ \eta^+_i&\eta^+_{i+1}\\ 
\eta_i&\eta_{i+1}\end{bmatrix} 
=\begin{bmatrix} 1&1\\0&1\\1&0\\0&0\end{bmatrix}
\quad\text{to state}\quad
\begin{bmatrix} 1&1\\1&0\\1&0\\0&0\end{bmatrix}
\]
happens at rate $q$ and results in $Q^\om=i+1>i=Q^\eta$.

In this section we construct  a different coupling that combines 
the basic coupling with auxiliary clocks for second
class particles.  The idea is to think of a single ``special''
second class particle as performing a random walk on the 
process of $\om-\eta$ second class particles.   This  coupling 
preserves the expected ordering of the special second class particles, 
 hence it can be regarded as a form 
of {\sl microscopic concavity}.

This theorem  summarizes the outcome. 

\begin{theorem} 
Assume given two  initial configurations
 $\{\zeta_i(0)\}$ and $\{\xi_i(0)\}$ and two
not necessarily distinct  positions 
$Q^\zeta(0)$ and $Q^\xi(0)$  on $\bZ$.
Suppose the coordinatewise ordering  $\zeta(0)\ge\xi(0)$ holds,
$Q^\zeta(0)\le Q^\xi(0)$, 
and $\zeta_i(0)=\xi_i(0)+1$ for $i\in\{Q^\zeta(0), Q^\xi(0)\}$.
Define the configuration $\zeta^-(0)=\zeta(0)-\delta_{Q^\zeta(0)}$. 

Then there exists a coupling of processes 
\[(\zeta^-(t), Q^\zeta(t), \xi(t), Q^\xi(t))_{t\ge 0}\] with initial 
state $(\zeta^-(0), Q^\zeta(0), \xi(0), Q^\xi(0))$ as described  in 
the previous paragraph, such that both pairs 
$(\zeta^-, Q^\zeta)$ and $(\xi, Q^\xi)$ are $(p,q)$-ASEP's
with a second class particle, and $Q^\zeta(t)\le Q^\xi(t)$
for all $t\ge 0$. 
\label{th:newcoupling}
\end{theorem}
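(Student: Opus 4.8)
The plan is to realize all four processes $\xi,\xi^+,\zeta^-,\zeta$ as one multi-class exclusion system driven by the common clocks $\{N^{i\to i+1},N^{i\to i-1}\}$, and then to do the real work at the level of the two marked particles. First I would check, from the hypotheses and in the case $Q^\zeta(0)<Q^\xi(0)$, the coordinatewise nesting $\xi(0)\le\xi^+(0)\le\zeta^-(0)\le\zeta(0)$ (the boundary case $Q^\zeta(0)=Q^\xi(0)$ is degenerate — the middle two configurations become incomparable — and I would treat it by running the same construction with the two labels started on a common discrepancy, or by a short separate argument). Given the nesting, declare the $\xi$-particles to be first class, the single discrepancy $\xi^+-\xi$ (living at $Q^\xi$) the next class, the remaining discrepancies $\zeta^--\xi^+$ a sea of ordinary second class particles, and the single discrepancy $\zeta-\zeta^-$ (living at $Q^\zeta$) the lowest particle class. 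Any such multi-class system has the property that every down-set of classes is itself a $(p,q)$-ASEP and that two consecutive down-sets are in basic coupling; this is a routine generator check (cf.\ Remark~\ref{asepinvrem}) and it automatically makes $(\xi,Q^\xi)$ and $(\zeta^-,Q^\zeta)$ genuine $(p,q)$-ASEP's with a second class particle. So the only genuine content is the ordering $Q^\zeta(t)\le Q^\xi(t)$.

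The second, essential, step is that the naive basic coupling of this multi-class system does \emph{not} preserve that ordering: a single main clock can carry the particle-like class $Q^\xi$ left past the hole-like class $Q^\zeta$, which is exactly the failure displayed in the $2\times2$ table just before the theorem. To fix this I would keep $\xi$ and $\zeta$ in basic coupling through the $N$-clocks — this transports the sea of $\zeta-\xi$ second class particles — but carry $Q^\zeta(t)$ and $Q^\xi(t)$ as \emph{labels} riding on second class particles, where a label both moves with its host and may, in addition, hop to a neighbouring second class particle according to an \emph{auxiliary} family of independent Poisson clocks. The rates of these auxiliary hops are to be chosen so that (i) the resulting one-dimensional motion of $Q^\xi$ against the background $\xi$ is precisely that of a single second class particle (right at rate $p$ into an $\xi$-hole, carried right at rate $q$ by an $\xi$-particle jumping over it, and symmetrically on the left), and likewise for $Q^\zeta$ against $\zeta^-$; and (ii) $Q^\xi$ is made as particle-like as possible and $Q^\zeta$ as hole-like as possible — when the two labels sit on neighbouring, or on the same, hosts, right-type moves are routed to $Q^\xi$ and left-type moves to $Q^\zeta$. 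This is the sense in which the special second class particle performs a random walk on the sea of second class particles, and it is what encodes the concavity of $\flux$, so that the denser second class particle $Q^\zeta$ drifts more slowly.

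With the rates in place I would verify $Q^\zeta(t)\le Q^\xi(t)$ by a one-event-at-a-time monotonicity check. On any bounded time interval there are, almost surely, only finitely many clock events near any edge (the usual Harris-construction argument the paper already uses), so it suffices to show: if $Q^\zeta\le Q^\xi$ just before a clock firing — main or auxiliary — then it still holds just after. Only events in which the two labels are at neighbouring sites, or at the same site but in their two respective pairs, can change the relative order, and there are finitely many local configurations of this type (recording whether each label sees, on each side, a first-class particle, a hole, a sea particle, or the other label). For each one, routing right-type moves to $Q^\xi$ and left-type moves to $Q^\zeta$ moves $Q^\zeta$ weakly to the left of, or together with, $Q^\xi$, so the order is preserved.

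The main obstacle is step two: choosing the auxiliary-clock rates so that the two marked walks \emph{simultaneously} carry the correct single-second-class-particle marginal in their respective ASEP and can never cross. Getting (i) and (ii) to hold at once forces one to account for every adjacency pattern of a label with first-class particles, holes, ordinary sea particles, and the other label, and to distribute the total jump rate among ``move with host'', ``auxiliary hop'', and ``do nothing'' accordingly. Once the rate table is correct, the generator and marginal identifications in step one and the monotonicity check in step three are straightforward bookkeeping.
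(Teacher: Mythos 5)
Your outline matches the paper's construction step by step: keep $\xi$ and $\zeta$ in basic coupling, let the two marked positions be labels $a\le b$ riding on the ordered $\zeta-\xi$ second-class sea, and drive the labels by auxiliary independent Poisson clocks chosen so that each marginal $(\zeta^-,X_a)$ and $(\xi,X_b)$ reproduces a genuine $(p,q)$-ASEP with a second class particle; your ``route right-type moves to $Q^\xi$, left-type to $Q^\zeta$'' heuristic is exactly the paper's monotonicity mechanism.

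What you defer, though, is essentially the whole content. First, your paragraph-one claim that the marginals are ``automatically'' ASEP-with-a-second-class-particle applies only to the naive multi-class basic coupling, which you then (correctly) abandon; after the pivot the marginal identification must be re-proved, and the paper does this by a case-by-case rate check on each edge $\{i,i+1\}$ (distinguishing whether $X_a$ sees a first-class particle, a hole, or another $X$-particle) followed by passage through the auxiliary pair $(\zeta^-,\zeta)$, which has compact state space and bounded nearest-neighbor rates so that Liggett's semigroup theory upgrades the rate identification to an identification of laws. Second, the rate table itself is where the concavity and the non-crossing actually live, and your heuristic, while aimed correctly, does not by itself pin it down. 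When $a\ne b$ the labels jump on independent clocks with hole-like rates for $a$ (right $q$, left $p$) and particle-like rates for $b$ (right $p$, left $q$), and it is precisely this independence that rules out a simultaneous swap $(a,b)\to(a+1,b-1)$. When $a=b$ and $X_{a+1}$ is adjacent, the two labels must be jointly coupled: $(a,b)\to(a,b+1)$ at rate $p-q$ and $(a,b)\to(a+1,b+1)$ at rate $q$ (with the mirror rule on the left), which realizes the right marginal rates $p$ for $b$ and $q$ for $a$ while preserving $a\le b$. Until these rates are written down and the edge-by-edge check is carried out, you have the shape of the argument but not the argument.
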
 

To begin the construction, 
put two exclusion processes
$\zeta$ and $\xi$ in basic coupling, obeying 
Poisson clocks $\{N^{i\to i\pm 1}\}$. 
They are  ordered so that $\zeta\ge\xi$. 
The $\zeta-\xi$ second class particles are labeled
in increasing order 
$\dotsm<X_{m-1}(t)<X_{m}(t)<X_{m+1}(t)<\dotsm$. 
We assume  there is at least
one such second class particle, but
beyond that we make no assumption about their
number.  Thus there is 
 some
finite or infinite subinterval $I\subseteq\bZ$ of indices 
such that 
  the positions of the  $\zeta-\xi$ second class particles
are given by $\{X_{m}(t): m\in I\}$. 

We introduce two dynamically evolving labels 
$\lbla(t)$, $\lblb(t)$ $\in I$ in such a manner that 
$X_{\lbla(t)}(t)$ is the position of a second class 
antiparticle
in the $\zeta$-process, 
$X_{\lblb(t)}(t)$ is the position of a second class particle
in the $\xi$-process, and the ordering 
\be
X_{\lbla(t)}(t)\le X_{\lblb(t)}(t)
\label{eq:2classorder} \ee
 is preserved by the 
dynamics. 

The labels $\lbla(t)$, $\lblb(t)$ are allowed to jump from
$m$ to $m\pm 1$ 
only when particle $X_{m\pm 1}$ is adjacent  
 to $X_m$. The labels  do not take
jump commands from the
Poisson clocks $\{N^{i\to i\pm 1}\}$
that govern $(\xi,\zeta)$.  Instead, the directed
edges $(i,i+1)$ and $(i,i-1)$ are given another
collection of independent Poisson clocks so that the
following 
jump rates are realized. 

(i) If $\lbla=\lblb$ and $X_{\lbla+1}=X_{\lbla}+1$ 
then
\[
 (\lbla,\lblb)\ \text{jumps to }\ \begin{cases}
 (\lbla,\lblb+1) &\text{with rate $p-q$}\\ 
(\lbla+1,\lblb+1)&\text{with rate $q$.}
\end{cases} \]

(ii) If $\lbla=\lblb$ and $X_{\lbla-1}=X_{\lbla}-1$ 
then
\[
 (\lbla,\lblb)\ \text{jumps to }\ \begin{cases}
 (\lbla-1,\lblb) &\text{with rate $p-q$}\\ 
(\lbla-1,\lblb-1)&\text{with rate $q$.}
\end{cases} \]

(iii) If $\lbla\ne\lblb$ then $\lbla$ and $\lblb$ jump
independently with these rates:
\begin{align*}
 \lbla\ \text{jumps to }\ &\begin{cases}
 \lbla+1 &\text{with rate $q$ if $X_{\lbla+1}=X_{\lbla}+1$}\\ 
 \lbla-1 &\text{with rate $p$ if $X_{\lbla-1}=X_{\lbla}-1$;}
\end{cases} \\
 \lblb\ \text{jumps to }\ &\begin{cases}
 \lblb+1 &\text{with rate $p$ if $X_{\lblb+1}=X_{\lblb}+1$}\\ 
 \lblb-1 &\text{with rate $q$ if $X_{\lblb-1}=X_{\lblb}-1$.}
\end{cases} 
\end{align*}

Let us emphasize that the pair process $(\xi,\zeta)$ is still
governed by the old clocks  $\{N^{i\to i\pm 1}\}$
in the basic coupling. The new clocks on edge $\{i,i+1\}$  that realize
rules (i)--(iii) are not observed except when sites
$\{i,i+1\}$ are both occupied by $X$-particles and 
at least one of $X_{\lbla}$ or $X_{\lblb}$ lies in
$\{i,i+1\}$.    

First note that if initially $\lbla(0)\le\lblb(0)$ then
 jumps (i)--(iii)  preserve the inequality
$\lbla(t)\le \lblb(t)$ which gives \eqref{eq:2classorder}.  
(Since the jumps in point (iii) happen
independently, there cannot be two simultaneous jumps. 
So it is not possible for $\lbla$ and $\lblb$ to
cross each other with a $(\lbla,\lblb)\to(\lbla+1,\lblb-1)$
move.)  

Define processes  $\zeta^-(t)=\zeta(t)-\delta_{X_{\lbla(t)}(t)}$
and  $\xi^+(t)=\xi(t)+\delta_{X_{\lblb(t)}(t)}$.
In other words, to produce $\zeta^-$ remove particle 
$X_{\lbla}$ from $\zeta$, and  to produce $\xi^+$ add particle 
$X_{\lblb}$ to $\xi$.  The second key point is that,
even though these new processes are no longer defined
by the standard graphical construction, distributionwise
 they are still  ASEP's with second class particles. 
We argue this point for  $(\zeta^-, X_{\lbla})$ and leave the argument 
for $(\xi, X_{\lblb})$ to the reader. 

\begin{lemma}
The pair $(\zeta^-, X_{\lbla})$ 
is a $(p,q)$-ASEP with a second class particle. 
\label{lm:zeta-asep}\end{lemma}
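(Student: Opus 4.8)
The plan is to identify the generator of the pair $(\zeta^-, X_\lbla)$ with that of a $(p,q)$-ASEP with a second class particle, in the form described in the ``basic coupling and second class particles'' paragraph and in \eqref{tasepL}: that is, to check that along every directed edge the joint jump rates of the configuration $\zeta^-$ together with the marked site $X_\lbla$ reproduce the required dynamics. Since $\zeta^-$ and $X_\lbla$ are read off from $\zeta$ and the label $\lbla$, and since $\zeta$, being in basic coupling with $\xi$, is itself a $(p,q)$-ASEP, I would organize the verification around two regimes: transitions that do not touch the marked site $x:=X_\lbla$, and transitions that move the marked particle.

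For the first regime, observe that $\zeta^-_j=\zeta_j$ for every $j\neq x$, so any $\zeta$-jump across an edge not incident to $x$ is literally a jump of $\zeta^-$, occurring at the correct first-class ASEP rates $p$ (rightward, into a $\zeta^-$-vacant site) and $q$ (leftward, into a $\zeta^-$-vacant site) because $\zeta$ is an ASEP. The only subtlety is a jump that carries a particle away from a neighbor of $x$; here one uses that the Poisson clocks $\{N^{i\to i\pm1}\}$ act on $(\zeta,\xi)$ in the usual way, and that $\zeta^-$, being a configuration of occupation numbers, does not register whether the departing particle was a $\xi$-particle or a $\zeta-\xi$ discrepancy, so no inconsistency arises.

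For the second regime I would split into cases by the $\zeta^-$-occupation of the neighbors $x\pm1$ and, when a neighbor is occupied, by whether it carries a $\xi$-particle or a $\zeta-\xi$ discrepancy (equivalently, whether $\lbla$ has an adjacent label to jump to). If $\zeta^-_{x+1}=0$ then $\zeta_{x+1}=0$, the clock $N^{x\to x+1}$ at rate $p$ carries the unique particle at $x$, namely $X_\lbla$, to $x+1$, and no label jump is available; thus $X_\lbla$ moves right at total rate $p$, and $\zeta^-+\delta_{X_\lbla}=\zeta$ is maintained. If $\zeta^-_{x+1}=1$ with a $\xi$-particle there, then $N^{x+1\to x}$ at rate $q$ barges that particle onto $x$ and relocates the discrepancy to $x+1$, so that the two occupied sites of $\zeta^-$ on $\{x,x+1\}$ swap --- exactly the rate-$q$ rightward move of a blocked second class particle. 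If instead $x+1$ carries a $\zeta-\xi$ discrepancy, the $N$-clocks on $\{x,x+1\}$ are frozen (both $\zeta$-sites occupied, both $\xi$-sites empty), and it is precisely the auxiliary clocks of rules (i) and (iii) that act: whether $\lbla=\lblb$ or $\lbla\neq\lblb$, the label $\lbla$ moves to $\lbla+1$ at total rate $q$, pushing $X_\lbla$ to $x+1$ in a swap with the discrepancy there. The mirror analysis at $x-1$ gives a leftward move at rate $q$ into a $\zeta^-$-vacant site and at rate $p$ as a swap with a $\zeta^-$-particle on the left, the rate $p$ in the frozen-discrepancy case being assembled as $(p-q)+q$ from rules (ii) and (iii). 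Collecting the cases shows the generator of $(\zeta^-, X_\lbla)$ agrees with that of a $(p,q)$-ASEP with a second class particle.

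I expect the frozen-discrepancy cases to be the main obstacle. There one must confirm that rules (i)--(iii) never produce an illegal move of $X_\lbla$ onto a site occupied in $\zeta$ by anything other than the discrepancy being swapped; that the $\lbla=\lblb$ and $\lbla\neq\lblb$ sub-rules yield the same total rate for $X_\lbla$ and cannot fire simultaneously with an $N$-clock move (so there is no double counting); and that the companion label $\lblb$, which governs the other pair $(\xi^+, X_\lblb)$ in Theorem~\ref{th:newcoupling}, contributes nothing extra to the marginal dynamics of $(\zeta^-, X_\lbla)$. Once this case check is complete, the asserted distributional identification of $(\zeta^-, X_\lbla)$ as a $(p,q)$-ASEP with a second class particle follows.
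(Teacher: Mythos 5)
Your case-by-case rate verification is the same strategy the paper uses: your split into ``marked site untouched'' versus ``marked site moves,'' with the further split by whether the neighbor is empty, holds a $\xi$-particle, or holds a $\zeta-\xi$ discrepancy, lines up exactly with the paper's Type 1/2/3 and Type 3.1 versus 3.2 decomposition, and your rate arithmetic from rules (i)--(iii) (rate $q$ to the right past a discrepancy, rate $p = (p-q)+q$ to the left past a discrepancy, matching the required second-class-particle rates) is correct. The one step the paper makes explicit that you pass over with ``the asserted distributional identification follows'' is how to promote correct local rates to a correct process law on an infinite lattice: the paper switches to the equivalent pair $(\zeta^-,\zeta)$, whose compact state space and bounded nearest-neighbor rates let it invoke the Liggett generator/semigroup uniqueness machinery, so that checking individual edge rates is indeed sufficient. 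You should add that justification before closing the argument; otherwise the proposal is sound.
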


\begin{proof} We check that the jump rates for the 
process  $(\zeta^-, X_{\lbla})$, produced by the combined
effect of the basic coupling with 
 clocks  $\{N^{i\to i\pm 1}\}$ and the new clocks, 
are  the same jump rates that result from defining
an (ASEP, second class particle) pair in terms of the
graphical construction.  

To have notation for the
possible jumps, let $0$ denote an empty site, $1$ 
a $\zeta^-$-particle, and $2$ particle $X_{\lbla}$. 
Consider a fixed pair $(i,i+1)$ of sites and write
$xy$ with $x,y\in\{0,1,2,\}$ for the contents of sites
$(i,i+1)$ before and after the jump.  Then here 
are the possible moves across
the edge $\{i,i+1\}$, and the rates that these moves
would have in the basic coupling. 
\begin{align*}
\text{Type 1} \qquad &10\longrightarrow 01 \ \text{ with rate $p$}\\
                  &01\longrightarrow 10 \ \text{ with rate $q$}\\[4pt]
\text{Type 2} \qquad &20\longrightarrow 02 \ \text{ with rate $p$}\\
                  &02\longrightarrow 20 \ \text{ with rate $q$}\\[4pt]
\text{Type 3} \qquad &12\longrightarrow 21 \ \text{ with rate $p$}\\
                  &21\longrightarrow 12 \ \text{ with rate $q$}\\
\end{align*}
Our task is to check that the construction of 
$(\zeta^-,X_{\lbla})$ actually
realizes these rates.  

Jumps of types 1 and 2 are prompted by
the clocks  $\{N^{i\to i\pm 1}\}$
of the graphical construction of $(\xi,\zeta)$, and hence have the correct
rates listed above.  

Jumps of type 3 occur in two distinct
ways. 

(Type 3.1) First there can be a $\xi$-particle next to $X_{\lbla}$, and then
the rates shown above are again realized by 
the clocks  $\{N^{i\to i\pm 1}\}$ because in
the basic coupling the $\xi$-particles have priority 
over the $X$-particles.  

(Type 3.2) The other 
alternative is 
that both sites $\{i,i+1\}$ are 
 occupied by $X$-particles  and one of them is
$X_{\lbla}$. 
The clocks  $\{N^{i\to i\pm 1}\}$
cannot interchange the $X$-particles across the 
 edge $\{i,i+1\}$ because in the $(\xi,\zeta)$-graphical construction
these are lower priority $\zeta$-particles that do not jump on
top of each other.  The otherwise missing jumps are now
supplied by  the ``new'' clocks that govern the jumps 
described in rules (i)--(iii). 

Combining (i)--(iii)  we can read that
if $X_{\lbla}=i+1$ and  $X_{\lbla-1}=i$, 
then $\lbla$ jumps to $\lbla-1$ with rate $p$. 
This is the first case of type 3 jumps above,
 corresponding to a $\zeta^-$-particle moving from
$i$ to $i+1$ with rate $p$, and the second class particle
$X_{\lbla}$ yielding. 
On the other hand,  if  $X_{\lbla}=i$ and  $X_{\lbla+1}=i+1$ then 
$\lbla$ jumps to $\lbla+1$ with rate $q$. This is
the second case in type 3, 
corresponding to a $\zeta^-$-particle moving from
$i+1$ to $i$ with rate $q$ and exchanging places with
 the second class particle
$X_{\lbla}$. 

We have  verified that the process $(\zeta^-, X_{\lbla})$ 
operates with the correct rates. 

To argue from the rates to the correct distribution of the 
process, we can make use of the process  $(\zeta^-, \zeta)$.
 The processes  $(\zeta^-, X_{\lbla})$ 
and  $(\zeta^-, \zeta)$ determine each other uniquely. 
The virtue of  $(\zeta^-, \zeta)$ is that it has a compact
state space and only nearest-neighbor jumps
with bounded rates.  Hence by the basic  theory
of semigroups and generators of particle systems as developed
in \cite{ligg-85}, 
given the initial configuration,
the distribution of the process  is uniquely determined by the
action of the generator on local functions.  Thus
it suffices to check that individual jumps have the 
correct rates across each edge $\{i,i+1\}$. This  is 
exactly what we did above in the language
of  $(\zeta^-, X_{\lbla})$.
\end{proof}

Similar argument shows that $(\xi, X_{\lblb})$ 
is a $(p,q)$-ASEP with a second class particle.  
To prove 
 Theorem \ref{th:newcoupling} take 
 $Q^\zeta=X_{\lbla}$ and $Q^\xi= X_{\lblb}$. 
This gives the coupling whose existence is claimed in the 
theorem.  

To conclude, let us observe  that the four
processes $(\xi, \xi^+,\zeta^-,\zeta)$ are not in
basic coupling.   For example,  the jump from state
\[
\begin{bmatrix}  \zeta_i&\zeta_{i+1}\\
\zeta^-_i&\zeta^-_{i+1}\\ \xi^+_i&\xi^+_{i+1}\\ 
\xi_i&\xi_{i+1}\end{bmatrix} 
=\begin{bmatrix} 1&1\\0&1\\1&0\\0&0\end{bmatrix}
\quad\text{to state}\quad
\begin{bmatrix} 1&1\\1&0\\0&1\\0&0\end{bmatrix}
\]
happens at rate $q$ (second case of rule (i)), while
in basic coupling this move is impossible.  

\bigskip

As the second point of this section we prove 
a random walk estimate.  
Let $Z(t)$ be a continuous-time nearest-neighbor 
random walk on  state-space
$S\subseteq\bZ$ that contains $\bZ_-=\{\dotsc,-2,-1,0\}$. Initially $Z(0)=0$.
 $Z$ attempts to jump from $x$ to $x+1$ with rate $p$ for $x\le -1$,
and from $x$ to $x-1$ with rate $q$ for $x\le 0$.
Assume $p>q=1-p$ and let $\theta=p-q$.  The rates on $S\smallsetminus\bZ_-$ need not
be specified. 
 
Whether jumps are permitted or not is determined by a fixed
environment expressed in terms of $\{0,1\}$-valued 
functions $\{u(x,t): x\in S,\, 0\le t<\infty\}$.  
A jump across edge $\{x-1,x\}$  in either direction is permitted
at time $t$ if $u(x,t)=1$, otherwise not.  
In other words, $u(x,t)$ is the
indicator of the event that edge $\{x-1,x\}$ is open at time $t$. 

\medskip

{\bf Assumption.}  Assume that for all $x\in S$ and  $T<\infty$,
$u(x,t)$ flips between $0$ and $1$ only finitely many times
during $0\le t\le T$.  Assume for convenience
 right-continuity: $u(x,t+)=u(x,t)$.

\begin{lemma} For all $t\ge 0$ and  $k\ge 0$,
\[
P\{ Z(t) \le -k\} \le e^{-2\theta k}.
\]
This bound holds for any fixed environment $\{u(x,t)\}$
subject to the assumption above. 
\label{lm:RW}\end{lemma}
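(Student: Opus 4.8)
\medskip

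The plan is to reduce the statement to a single monotonicity property of the time-$t$ marginal law of $Z$ and then finish with a geometric summation. First I would observe that the walk can never sit strictly to the right of the origin: there is no rightward jump out of $0$ (the rate $p$ is present only at sites $\le -1$), so $Z(0)=0$ forces $Z(t)\in\bZ_-$ for every $t$. Hence it suffices to bound $\pi_{-k}(t):=P\{Z(t)=-k\}$ for $k\ge 0$, and the claim becomes $\sum_{j\ge k}\pi_{-j}(t)\le e^{-2\theta k}$.

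The heart of the proof — and the step I expect to be the main obstacle, since a naive stochastic-domination coupling with a homogeneous biased walk fails (closing an edge just to the right of the particle pushes it leftward) — is the environment-uniform estimate $\pi_{-k}(t)\le (q/p)\,\pi_{-(k-1)}(t)$ for all $k\ge 1$ and $t\ge 0$. I would prove this directly from the Kolmogorov forward equations. Using that $x-1\to x$ has rate $p\,u(x,t)$, that $x\to x-1$ has rate $q\,u(x,t)$, and that site $0$ has no right jump, set $D_k(t)=q\,\pi_{-(k-1)}(t)-p\,\pi_{-k}(t)$; a direct computation (repeatedly using $p+q=1$) turns the forward equations into a tridiagonal linear system
\[
\dot D_1=-u(0,t)\,D_1+p\,u(-1,t)\,D_2,
\]
\[
\dot D_k=q\,u(-(k-2),t)\,D_{k-1}-u(-(k-1),t)\,D_k+p\,u(-k,t)\,D_{k+1}\qquad(k\ge 2),
\]
the equation for $D_1$ being special precisely because $0$ has no right jump. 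At every instant the coefficient matrix has nonnegative off-diagonal entries, so it generates a positivity-preserving flow; since the environment flips only finitely often on bounded intervals the system is piecewise constant-coefficient and this is exact. Because $D_1(0)=q>0$ and $D_k(0)=0$ for $k\ge 2$, the initial vector is nonnegative, so $D_k(t)\ge 0$ for all $k$ and $t$, which is the displayed inequality. (A little care is needed to make the positivity argument rigorous for the infinite system — e.g.\ truncate at a large level, or simply check that a bounded matrix with nonnegative off-diagonal entries has a nonnegative matrix exponential.)

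Granting the monotonicity, the rest is bookkeeping. Iterating gives $\pi_{-j}(t)\le (q/p)^{\,j-k+1}\pi_{-(k-1)}(t)$ for $j\ge k$, so with $F_k(t):=P\{Z(t)\le -k\}$ and $\pi_{-(k-1)}(t)=F_{k-1}(t)-F_k(t)$,
\[
F_k(t)=\sum_{j\ge k}\pi_{-j}(t)\le\Bigl(\sum_{i\ge 1}(q/p)^i\Bigr)\pi_{-(k-1)}(t)=\frac{q}{\theta}\bigl(F_{k-1}(t)-F_k(t)\bigr),
\]
which rearranges, using $\theta+q=p$, to $F_k(t)\le (q/p)\,F_{k-1}(t)$. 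Iterating down to $F_0(t)=P\{Z(t)\le 0\}=1$ gives $F_k(t)\le (q/p)^k$, and since $\log\frac{1+\theta}{1-\theta}=2(\theta+\theta^3/3+\cdots)\ge 2\theta$ we get $(q/p)^k=\bigl(\tfrac{1-\theta}{1+\theta}\bigr)^k\le e^{-2\theta k}$, as required.
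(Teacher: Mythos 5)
Your first claim — that $Z(t)\in\bZ_-$ for all $t$ because ``there is no rightward jump out of $0$'' — is false, and this is a genuine gap rather than a fixable slip. The lemma leaves the rate of the jump $0\to 1$ unspecified (it is one of the ``rates on $S\smallsetminus\bZ_-$'' that ``need not be specified''), and the paper's own proof explicitly introduces the auxiliary walk $Y$ precisely by ``setting the rate of jumping from $0$ to $1$ to zero'' — a device that would be pointless if $Z$ already never left $\bZ_-$. In the actual application (Lemma~\ref{lm:mQ}), the label process that plays the role of $Z$ after a lattice reflection does jump away from $0$ in both directions.

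Once the walk can leak mass from $0$ to $1$, your Kolmogorov computation for $\dot D_1$ breaks: $\dot\pi_0$ acquires an extra term $-r_{01}\,u(1,t)\,\pi_0+r_{10}\,u(1,t)\,\pi_1$, so $\dot D_1$ picks up $-q\,r_{01}\,u(1,t)\,\pi_0+q\,r_{10}\,u(1,t)\,\pi_1$, and the $(D_k)$ system is no longer closed — the bad term depends on $\pi_0$ itself, not on a $D_j$. And the intermediate inequality $\pi_{-k}(t)\le(q/p)\,\pi_{-(k-1)}(t)$ is not merely unproved but can actually fail. Concretely: keep edge $\{0,1\}$ closed for a long time so the confined walk equilibrates to (approximately) the geometric distribution, where $D_k\approx 0$ for all $k$; then open edge $\{0,1\}$. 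At that instant $\dot D_1\approx -q\,r_{01}\,\pi_0<0$, so $D_1$ becomes negative, i.e.\ $\pi_{-1}>(q/p)\,\pi_0$. (Or: with an absorbing site $1$ and a huge rate $0\to1$, most of the mass drains out of $0$ instantly and the ratio $\pi_{-1}/\pi_0$ blows up.) The final bound $P\{Z(t)\le -k\}\le(q/p)^k$ of the lemma survives in these examples, but your route to it does not.

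The fix brings you essentially back to the paper's approach. The paper couples $Z$ with a walk $Y$ that lives in the same environment but has the $0\to1$ jump suppressed, so $Y\le Z$ is preserved, and then shows $Y$'s time marginals are exactly geometric by detailed balance in the (possibly edge-closed) environment on each interval of environment constancy. Your forward-equation/Metzler-matrix argument is a clean and correct alternative to the detailed-balance step, \emph{provided} it is applied to the confined walk $Y$ rather than to $Z$: for $Y$, your ODE derivation is valid, the $(D_k)$ system is closed and cooperative, $D_1(0)=q>0$ and $D_k(0)=0$ for $k\ge2$, so $D_k(t)\ge0$ and the geometric-sum bookkeeping gives $P\{Y(t)\le -k\}\le(q/p)^k$; the coupling $Y\le Z$ then finishes. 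In other words, you still need a coupling — not with a homogeneous biased walk, as you rightly observe fails, but with the in-environment confined walk, which is exactly what makes the paper's proof work.
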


\begin{proof} 
 Let $Y(t)$ be a walk that operates exactly
as $Z(t)$ on $\bZ_-$ but is restricted to remain in $\bZ_-$
by setting the  rate of jumping from $0$ to $1$ to zero. 
Give $Y(t)$ geometric initial distribution
\[ P\{Y(0)=-j\}= \pi(j)\equiv
 \Bigl(1-\frac{q}{p}\Bigr)\Bigl(\frac{q}{p}\Bigr)^j
\quad\text{for $j\ge 0$.}\]
The initial points satisfy $Y(0)\le Z(0)$ a.s.
Couple the walks through Poisson clocks so that the inequality
 $Y(t)\le Z(t)$ is preserved for all time $0\le t<\infty$.

Without the inhomogeneous  environment $Y(t)$ would be a stationary, 
reversible  birth and death
process.  We argue that even with the environment the time marginals
$Y(t)$ still have distribution $\pi$. 
This suffices for the conclusion, for then
\[
P\{Z(t)\le -k\}\le P\{Y(t)\le -k\}=(q/p)^k
=\exp\bigl(k\log\tfrac{1-\theta}{1+\theta}\bigr)
\le e^{-2\theta k}.
\]

To justify the claim about $Y(t)$, consider approximating processes
$Y^{(m)}(t)$, $m\in\bN$, with the same initial value
$Y^{(m)}(0)=Y(0)$.  $Y^{(m)}(t)$ evolves  so that the environments
$\{u(x,t)\}$ restrict its motion only on edges
$\{x-1,x\}$ for $-m+1\le x\le 0$.  In other words, for walk $Y^{(m)}(t)$
we set $u(x,t)\equiv 1$ for $x\le -m$ and $0\le t<\infty$. 
We couple the walks together so that $Y(t)=Y^{(m)}(t)$ 
until the first time one of the walks exits the interval
$\{-m+1,\dotsc,0\}$.  

Fixing $m$ for a moment,
let $0=s_0<s_1<s_2<s_3<\dotsc$ be a partition of the 
time axis so that $s_j\nearrow\infty$ and the environments
$\{u(x,t): -m<x\le 0\}$ are constant on each interval 
$t\in[s_i,s_{i+1})$.  Then on each time interval $[s_i,s_{i+1})$
$Y^{(m)}(t)$ is a continuous time Markov chain with time-homogeneous
jump rates
\begin{align*}
c(x,x+1)&=\begin{cases} pu(x+1,s_i), &-m\le x\le 0\\
                        p, &x\le -m-1 \end{cases}\\
\intertext{and}
c(x,x-1)&=\begin{cases} qu(x,s_i), &-m+1\le x\le 0\\
                        q, &x\le -m. \end{cases}
\end{align*}
One can check that detailed balance $\pi(x)c(x,x+1)=\pi(x+1)c(x+1,x)$
 holds for all $x\le -1$.  Thus $\pi$ is a reversible measure
for walk $Y^{(m)}(t)$ on each  time interval $[s_i,s_{i+1})$,
and we conclude that 
 $Y^{(m)}(t)$ has distribution $\pi$ for all $0\le t<\infty$. 

The coupling ensures that $Y^{(m)}(t)\to Y(t)$ almost surely
as $m\to\infty$,
and consequently also $Y(t)$  has distribution $\pi$ for all 
$0\le t<\infty$. 
\end{proof}

\subsection{Proof of the upper bound for second class  particle moments}  \label{asepubsec}

Abbreviate 
\be
\Psi(t)=\Ev^\rho\lvert Q(t)-V^\rho t\rvert.
\label{def:Psi}\ee

\begin{lemma}   Let $B\in(0,\infty)$.  
 Then there exists a 
numerical  constant
$C\in(0,\infty)$  and another constant $c_1(B)\in(0,\infty)$
    such that, for
all densities  $0< \rho< 1$, $u\ge 1$, $0<\bias<1/2$, and  
  $t\ge c_1(B)\bias^{-1}$,  
\be\begin{aligned} 
&\Pv^\rho\{Q(t)\ge V^\rho t+u\} \\[4pt]
&\quad \le \begin{cases} 
 C\bias^2 \Bigl(\,\dfrac{t^2}{u^4}\Psi(t) + \dfrac{t^2}{u^3} \Bigr)+e^{-u^2/Ct}, 
 &B\bias^{2/3} t^{2/3}\le  u\le 20t/3\\[9pt]
e^{-u/C}, 
&u\ge 20 t/3.  \end{cases}  \end{aligned}
\label{eq:UBge1/2}\ee
 \label{lm:UB1}
\end{lemma}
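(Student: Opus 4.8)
The plan is to treat the two ranges of $u$ in \eqref{eq:UBge1/2} separately: the large range by a soft large–deviation argument, and the main range by transferring the event to a less dense system via the microscopic–concavity coupling and then estimating a centred displacement by its moments.

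\emph{The range $u\ge 20t/3$.} Here the bound is crude. The position $Q(t)$ can increase only when the Poisson clock on the edge immediately to the right of $Q$ rings, and this occurs at rate at most $p\le 1$ whatever the surrounding configuration is; hence $Q(t)-Q(0)$ is stochastically dominated by a Poisson variable $N$ with mean $pt\le t$. Since $V^\rho t+u\ge u\ge \tfrac{20t}{3}\ge \tfrac{20}{3}\,\Ev N$, the Cram\'er bound for the Poisson law gives $\Pv^\rho\{Q(t)\ge V^\rho t+u\}\le \Pv\{N\ge V^\rho t+u\}\le e^{-u/C}$ for a numerical constant $C$; no restriction on $t$ relative to $\bias$ is needed. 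This is the second line of \eqref{eq:UBge1/2}.

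\emph{The range $B\bias^{2/3}t^{2/3}\le u\le 20t/3$.} The idea is to compare $Q$ with a second class particle in a less dense system, chosen so that the target point $V^\rho t+u$ is pushed toward (or past) that system's characteristic. Fix an auxiliary density $\lambda<\rho$. Because $\flux(\rho)=\bias\rho(1-\rho)$, one has $V^\lambda-V^\rho=2\bias(\rho-\lambda)$, so taking $\rho-\lambda$ of order $u/(\bias t)$ — a legitimate choice since $t\ge c_1(B)\bias^{-1}$ keeps this under control, the genuinely borderline cases being absorbed into $c_1(B)$ and $C$ — places $V^\lambda t$ a fixed fraction of $u$ to the right of $V^\rho t$. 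Theorem \ref{th:newcoupling} then supplies a coupling in which our $(p,q)$-ASEP with second class particle $(\eta,Q)$ at density $\rho$ and a $(p,q)$-ASEP with second class particle $(\xi,Q^\lambda)$ at density $\lambda$ are realized with $Q(t)\le Q^\lambda(t)$ for all $t$, the ``special'' particle moving as a random walk on the sea of $\zeta-\xi$ discrepancy particles. On the event $\{Q(t)\ge V^\rho t+u\}$ one then has $Q^\lambda(t)-V^\lambda t$ at least a fixed multiple of $u$, \emph{except} on the event that the special particle has lagged the bulk of the $O(\bias t)$ discrepancies lying between level $0$ and the characteristic; that probability is controlled by the random–walk estimate of Lemma \ref{lm:RW} and produces the Gaussian term $e^{-u^2/Ct}$. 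The matter thus reduces to bounding $\Pv^\lambda\{Q^\lambda(t)-V^\lambda t\ge c\,u\}$ for a fixed $c>0$.

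\emph{The centred tail.} For this last probability I would use Chebyshev, $\Pv^\lambda\{Q^\lambda(t)-V^\lambda t\ge c\,u\}\le (c\,u)^{-2}\,\Ev^\lambda\big[(Q^\lambda(t)-V^\lambda t)^2\big]$, which leaves the centred second moment to estimate. This quantity is not one of the identities in Theorem \ref{asepthm1}, but it can be extracted from them: differentiating the current identity $\Var^\rho[J_z(t)]=\rho(1-\rho)\,\Ev^\rho|Q(t)-z|$ from \eqref{VarJQ} once more in the density, and using $\Ev^\rho Q(t)=V^\rho t$ from \eqref{eq:EQH'a}, yields at the relevant spatial scale an estimate of the schematic shape $\Ev^\rho\big[(Q(t)-V^\rho t)^2\big]\le C\bias^2 t^2\big(u^{-2}\Psi(t)+u^{-1}\big)$, with $\Psi$ as in \eqref{def:Psi}; this is where the prefactor $\bias^2t^2$ and the self-referential appearance of $\Psi(t)$ in \eqref{eq:UBge1/2} both originate, and after dividing by $u^2$ it reproduces the first line of \eqref{eq:UBge1/2} (the surviving $u^{-3}$ term being exactly what forces the restriction $m<3$ in Theorem \ref{Qmomthm} once the bound is integrated against $u^{m-1}\,du$). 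The main obstacle is precisely this last step: unlike the first absolute moment, the centred second moment of $Q(t)$ has no clean closed form, and obtaining it with the correct dependence on $\bias$, $t$ and $\Psi(t)$ requires combining the derivative of \eqref{VarJQ} with the coupling comparison and Lemma \ref{lm:RW}, and then carefully keeping track of the $O(1)$ position shifts and of the borderline choices of the auxiliary density $\lambda$.
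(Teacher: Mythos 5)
Your Case~1 is sound: dominating $Q(t)-Q(0)$ by a rate-$p\le1$ Poisson variable $N$ and using a Poisson large-deviation bound does give $e^{-u/C}$ for $u\ge 20t/3$ (one needs the small observation that $V^\rho t+u\ge u-\theta t\ge u-t/2$ is still a constant multiple of $u$ on this range). The paper uses an exponential Chebyshev on a dominating continuous-time random walk, which also hands it the Gaussian term $e^{-u^2/Ct}$ on the overlap $5\rho\theta t\le u\le 20t/3$, but your Poisson argument plus the paper's Case~2 structure would cover the same ground.

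The genuine gap is in your Case~2. The reduction via Theorem~\ref{th:newcoupling} to $\Pv^\lambda\{Q^\lambda(t)-V^\lambda t\ge cu\}$ is a legitimate first move, but the estimate you then propose — Chebyshev on the centred \emph{second moment} of $Q^\lambda$, to be extracted by ``differentiating \eqref{VarJQ} once more in density'' — does not exist. There is no identity expressing $\Ev^\rho[(Q(t)-V^\rho t)^2]$ through the current or through $\Psi(t)$; Theorem~\ref{asepthm1} and its corollaries only give the first absolute moment $\Ev^\rho|Q(t)-z|$. The schematic inequality you write, $\Ev^\rho[(Q(t)-V^\rho t)^2]\le C\theta^2 t^2(u^{-2}\Psi(t)+u^{-1})$, cannot hold as stated because its left-hand side does not depend on $u$; the $u$-dependence in \eqref{eq:UBge1/2} arises from a different mechanism. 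This step is not merely hard, it is the wrong object to Chebyshev.

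The paper applies Chebyshev to the \emph{current difference} $J^\omega_{\fl{V^\rho t}+u}(t)-J^\eta_{\fl{V^\rho t}+u}(t)$ between a density-$\rho$ process $\omega$ and a density-$\lambda$ process $\eta$ put in ordinary basic coupling (not the coupling of Theorem~\ref{th:newcoupling}). The second-class particle $Q$ is realized as one member $X_{\Qlb(t)}$ of the family of $\omega-\eta$ discrepancies $\{X_m\}$; the label $\Qlb(t)$, which under basic coupling performs a random walk with left drift on the $X$-sea, satisfies the geometric tail bound $\Pv\{\Qlb(t)\ge k\}\le e^{-2\theta k}$ (Lemma~\ref{lm:mQ}, which is the actual use of Lemma~\ref{lm:RW} here and produces the Gaussian term $e^{-u^2/Ct}$ after choosing $k\approx u^2/(20\theta t)$). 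On the complementary event $\{\Qlb(t)<k\}$, the location of $Q$ past $\fl{V^\rho t}+u$ forces $J^{\omega-\eta}_{\fl{V^\rho t}+u}(t)>-k$, i.e.\ an upper bound on the current difference. Its mean is, via a Taylor expansion of the flux and the concavity $\tfrac12\flux''(\rho)=-\theta$, approximately $-u^2/(25\theta t)-k$; this gap of order $u^2/(\theta t)$ is precisely what Chebyshev divides by, yielding the factor $\theta^2t^2/u^4$. The variances of $J^\omega$ and $J^\eta$ are then bounded by $\Psi(t)+O(u)$ through \eqref{VarJQ}, which is where Theorem~\ref{th:newcoupling} finally enters — only to compare $\Ev^\lambda|Q^\eta(t)-V^\rho t|$ to $\Psi(t)$ measured at density $\rho$ (the estimate \eqref{eq:aux9}). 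In short: you have the right ingredients (an auxiliary density $\lambda$ with $\rho-\lambda\sim u/(\theta t)$, the microscopic-concavity coupling, the random-walk label bound, the identity \eqref{VarJQ}), but you assemble them in an order that asks for a second moment of $Q$ that is unavailable, instead of turning the event into a current estimate and letting the concavity gap in the mean current supply the $u^2/(\theta t)$ scale.
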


\begin{proof} 
First we get an easy case out of the way.  

\medskip

{\bf Case 1.}   $u\ge 5\rho{\bias} t$.

\medskip

This comes from an exponential Chebyshev argument.  Let $Z_t$ be  a nearest-neighbor random
walk   with rates $p=(1+\bias)/2$ to the right and 
$q=(1-\bias)/2 $ to the left.  For $\alpha\in(0,1]$,   using 
\[  \frac{e^\alpha+e^{-\alpha}}2\le 1+\alpha^2
\quad\text{and}\quad  \frac{e^\alpha-e^{-\alpha}}2 \le \alpha+\alpha^2, \]
we get 
\be\begin{aligned}
\Ev[e^{\alpha Z_t}]
&=\exp\Bigl( - t+t\frac{e^\alpha+e^{-\alpha}}2  
+\bias t\frac{e^\alpha-e^{-\alpha}}2 \,\Bigr)\\
&\le \exp\bigl( 2\alpha^2t + \alpha\bias t   \bigr).   
\end{aligned} \label{Ztrans}\ee
  We have the stochastic domination 
$Z_t\ge Q(t)$ because no matter what the environment next to $Q(t)$ is, 
$Q(t)$   has a weaker right drift than $Z_t$. 
Then, since $V^\rho={\bias}(1-2\rho)$ and   $2\rho{\bias} t\le 2u/5$, 
 \be\begin{split}
\Pv^\rho\{Q(t)\ge V^\rho t+ u\} 
&\le P\{Z_t\ge {\bias} t+ \tfrac35u\}\\
&\le 
\exp\bigl(-\tfrac35\alpha u+ 2\alpha^2t  \bigr)\\
& \le\begin{cases}  \exp(-\tfrac{9u^2}{200t}) &u\le 20t/3\\
              \exp(-3u/10)  &u> 20t/3. \end{cases} 
\end{split} 
\label{eq:aux15.5}\ee
In the last inequality above choose $\alpha =1\wedge\frac{3u}{20t}$. 
 Note that   $20t/3>5\rho{\bias} t$.
 
 It remains to consider this range of $u$: 
  
\medskip

{\bf Case 2.} $B\bias^{2/3}t^{2/3} \le u\le 5\rho{\bias} t$.

\medskip

  By an adjustment
of  the constant $C$ we can 
 assume   that  
$u$ is a positive integer.  
Fix a density
 $0<\rho<1$ and an auxiliary  density 
 \be
\lambda=\rho-\frac{u}{10{\bias} t}.
 \label{eq:lachoice}\ee
 Start with the basic coupling of three exclusion processes
$\om\ge\om^-\ge \eta$ with this initial set-up: 
 
(a) 
Initially $\{\om_i(0):i\ne 0\}$ are i.i.d.\ Bernoulli($\rho$) 
distributed 
and $\om_0(0)=1$.

(b) 
Initially $\om^-(0)=\om(0)-\delta_0$.   

(c) Initially
 variables $\{\eta_i(0):i\ne 0\}$ 
are i.i.d.\ Bernoulli($\lambda$) and $\eta_0(0)=0$. 
 The   coupling of the 
initial occupations is 
such that $\om_i(0)\ge\eta_i(0)$ for all $i\ne 0$.

Recall that  basic coupling meant that  these processes
obey  common  Poisson clocks. 

Let $Q(t)$ be the position of the single second class 
particle between $\om(t)$ and $\om^-(t)$, initially
at the origin.  Let $\{X_i(t):i\in\bZ\}$ be the positions
of the $\om-\eta$ second class particles, initially 
labeled so that 
\[
\dotsm<X_{-2}(0)<X_{-1}(0)<X_0(0)=0<X_1(0)<X_2(0)<\dotsm 
\]
These second class particles preserve their labels in the
dynamics and stay ordered.  Thus the $\om(t)$ configuration
consists of first class particles (the $\eta(t)$ process)
and second class particles (the $X_j(t)$'s). 
$\Pv$ denotes the probability measure under which all these
coupled processes live. Note that the marginal distribution
of $(\om,\om^-,Q)$ under $\Pv$ is exactly as it would
be under $\Pv^\rho$. 

For $x\in\bZ$,  
 $J^\om_x(t)$ is  the net current in the 
$\om$-process between space-time positions 
$(1/2,0)$ and $(x+1/2,t)$.  Similarly 
 $J^\eta_x(t)$  in the $\eta$-process, 
and  $J^{\om-\eta}_x(t)$ is  the net current of
second class particles.  
Current 
in the $\om$-process is a sum of  the first class particle
current and the second class particle current:  
\be
J^\om_x(t)= J^\eta_x(t)+J^{\om-\eta}_x(t).
\label{eq:currsum}\ee

 $Q(t)\in\{X_j(t)\}$ for all time
because  the basic coupling preserves the
  ordering
$\om^-(t)\ge\eta(t)$.   Define the label $\Qlb(t)$ by
$Q(t)=X_{\Qlb(t)}(t)$ with initial value $\Qlb(0)=0$. 
 
\begin{lemma} 
For all $t\ge 0$ and $k\ge 0$,
\[
\Pv\{\Qlb(t)\ge k\} \le e^{-2\theta k}.
\]
\label{lm:mQ}\end{lemma} 
\begin{proof}[Proof of Lemma \ref{lm:mQ}]
In the basic coupling 
 the label $\Qlb(t)$ evolves as follows.  When $X_{\Qlb-1}$ is
adjacent to $X_{\Qlb}$, $\Qlb$ jumps down by one at rate $p$. 
And when  $X_{\Qlb+1}$ is
adjacent to $X_{\Qlb}$, $\Qlb$ jumps up by one at rate $q$. 
When $X_{\Qlb}$ has no  $X$-particle in either neighboring site,
the label $\Qlb$ cannot jump. 
Thus the situation is like that in  Lemma \ref{lm:RW} 
(with a reversal of lattice directions) 
with  environment  given
by the adjacency of $X$-particles:  
$u(m,t)=\ind\{X_m(t)=X_{m-1}(t)+1\}$. 
However,  the basic coupling mixes together the
evolution of the environment and the walk $\Qlb$,
so the environment is not specified in advance
as required by  Lemma \ref{lm:RW}. 

We can get around this difficulty by imagining an alternative
but distributionally equivalent 
construction for the joint process $(\eta, \om^-,\om)$. 
Let $(\eta,\om)$ obey basic coupling with the given Poisson
clocks $\{N^{x\to x\pm 1}\}$ attached to 
directed edges $(x,x\pm 1)$.  Divide
the $\om-\eta$ particles further into
class II  consisting  of the 
particles   $\om^--\eta$  and 
class III that consists only of the single 
particle  $\om-\om^-=\delta_Q$.  Let class II have 
priority over class III.  
 Introduce another independent set of Poisson
clocks $\{\widetilde N^{x\to x\pm 1}\}$, also attached
to directed edges $(x,x\pm 1)$ of the space $\bZ$ where
particles move.  Let clocks $\{\widetilde N^{x\to x\pm 1}\}$  
 govern the exchanges between 
classes II and III.  In other words, for each edge
$\{x,x+1\}$  clocks
$\widetilde N^{x\to x+ 1}$ and $\widetilde N^{x+1\to x}$ 
are observed if 
sites $\{x,x+1\}$ are both occupied by $\om-\eta$
particles.  All other jumps are prompted by the 
original clocks. 

The rates for individual jumps are the same in this
alternative construction as in the earlier one where
all processes were together in basic coupling.  Thus
the same distribution for the process  $(\eta, \om^-,\om)$
 is created.  

To apply  Lemma \ref{lm:RW} perform the construction
in two steps. First construct the process  $(\eta,\om)$
for all time.  This determines the environment 
$u(m,t)=\ind\{X_m(t)=X_{m-1}(t)+1\}$.  Then run the 
dynamics of classes II and III in this environment.
Now Lemma \ref{lm:RW} gives the bound for $\Qlb$. 
\end{proof}

 Let
$u$   be a positive integer  and  
 \be
 k=\left\lfloor\frac{u^2}{20{\bias} t}\right\rfloor-3.
\label{eq:kchoice}\ee
By  assuming $t\ge C(B)\bias^{-1}$ we guarantee that
\[  u\ge 1  
    \quad\text{and}\quad  \frac{u^2}{40{\bias} t} \ge \frac{B^2\bias^{1/3} t^{1/3}}{40}
\ge 4. \]
Then 
\be
k\ge \frac{u^2}{40{\bias} t}\ge 4. 
\label{eq:kprop}\ee
We begin 
  a series of inequalities.
\begin{align}
&\Pv\{Q(t)\ge V^\rho t +u\} \nn\\
&\qquad \le \Pv\{\Qlb(t)\ge k\} 
+ \Pv\{  J^\om_{\fl{V^\rho t}+u}(t) \; - \; J^\eta_{\fl{V^\rho t}+u}(t)
>-k\}. 
\label{line2}\end{align}
To explain the inequality above,
if $Q(t)\ge V^\rho t +u$ and 
$\Qlb(t)<k$ then $X_k(t)>\fl{V^\rho t}+u$. This puts
the bound  \[J^{\om-\eta}_{\fl{V^\rho t}+u}(t)> -k\] on the
second class particle current, because 
at most particles  $X_1,\dotsc, X_{k-1}$ could have
made a negative contribution to this current.

Lemma \ref{lm:mQ} takes care of
the first probability on line \eqref{line2}.
We work on the second probability on line
\eqref{line2}.  

Here is a simple observation that will be used repeatedly.  
Process $\om$ can be coupled
with a stationary density-$\rho$ process $\om^{(\rho)}$ 
 so that the coupled pair $(\om,\om^{(\rho)})$ has 
at most 1 discrepancy.  In this coupling
\be
\lvert J^\om_{x}(t)-J^{\om^{(\rho)}}_{x}(t)\rvert\le 1. 
\label{eq:coupJ}\ee 
This way we can  use computations for 
stationary processes at the expense of small errors.  

 Recall
that $V^\rho=\flux'(\rho)$. Let
$c_1$ below be a constant that absorbs the errors
from using means of stationary processes  
 and from ignoring  integer parts. It satisfies 
$\abs{c_1}\le 3$.  
\begin{align}
&\Ev J^\om_{\fl{V^\rho t}+u}(t) - \Ev J^\eta_{\fl{V^\rho t}+u}(t)\\
&= 
t\flux(\rho)-(\flux'(\rho)t+u)\rho -t\flux(\lambda)+(\flux'(\rho)t+u)\lambda
+c_1\nn\\
&= -\tfrac12 t\flux''(\rho)(\rho-\lambda)^2-u(\rho-\lambda)+c_1 \nn\\
&=t\theta(\rho-\lambda)^2-u(\rho-\lambda)+c_1  \label{line1.8}\nn \\
&=t\theta(\rho-\lambda)^2-u(\rho-\lambda)+c_1+k-k \nn\\
&\le  \frac{u^2}{100t\bias}- \frac{u^2}{10t\bias}+\frac{u^2}{20t\bias}-k \nn\\
&=-\,\frac{u^2}{25t\bias}-k.
\end{align}
 The $-3$ in the definition \eqref{eq:kchoice}
of  $k$ absorbed $c_1$ above. 

Let   $\overline  X=X-EX$ denote a centered random
variable.  Continuing with the  second probability
 from line \eqref{line2}:
\begin{align}
&\Pv\{  J^\om_{\fl{V^\rho t}+u}(t) \; - \; J^\eta_{\fl{V^\rho t}+u}(t)
>-k\}\nn\\
&\le \Pv\Bigl\{  \overline J^\om_{\fl{V^\rho t}+u}(t) \; - \; 
\overline J^\eta_{\fl{V^\rho t}+u}(t) \ge  
\frac{u^2}{25t\theta} \Bigr\}\nn\\
&\le \frac{C\theta^2t^2}{u^4} 
\Vv\Bigl\{ J^\om_{\fl{V^\rho t}+u}(t) \; - 
\; J^\eta_{\fl{V^\rho t}+u}(t)\Bigr\}
\nn\\
&\le \frac{C\theta^2t^2}{u^4} 
\Bigl(\Vv\bigl\{ J^\om_{\fl{V^\rho t}+u}(t)\bigr\} 
\; + \; \Vv\bigl\{J^\eta_{\fl{V^\rho t}+u}(t)\bigr\}\,\Bigr).\label{line6}
\end{align} 
  $C$ is a numerical constant that can change from line to
line but  is independent of all the parameters. 

We develop bounds on  the variances above, 
first for $J^\om$. Pass to the stationary density-$\rho$ 
 process via \eqref{eq:coupJ} and apply 
\eqref{VarJQ}: 
\begin{align}
\Vv\bigl\{ J^\om_{\fl{V^\rho t}+u}(t)\bigr\}
&\le 2\Var^\rho\bigl\{ J_{\fl{V^\rho t}+u}(t)\bigr\}+2 \nn\\
&=
2\rho(1-\rho) 
\Ev\bigl\lvert Q(t)-\fl{V^\rho t}-u\bigr\rvert + 2\nn\\
&\le
\Ev\lvert Q(t)-V^\rho t\,\rvert + u +3\nn \\
&\le  \Psi(t) + 4u.
\label{eq:aux7}
\end{align}  
 
 Let $\Var^\lambda$ denote variance in the stationary 
density-$\lambda$  process
and let  $Q^\eta(t)$ denote the position
of a second class particle added to a process 
$\eta$. 
 \begin{align}
&\Vv\bigl\{ J^\eta_{\fl{V^\rho t}+u}(t)\bigr\}
\le 2\Var^\lambda\bigl\{ J_{\fl{V^\rho t}+u}(t)\bigr\}+2 \nn\\
&\qquad \le 
\Ev^\lambda\bigl\lvert Q^\eta(t)-\fl{V^\rho t}-u\bigr\rvert + 2\nn\\
&\qquad \le
\Ev^\lambda\lvert Q^\eta(t)-V^\rho t\,\rvert + 4u \nn\\
\intertext{Introduce process 
$(\zeta^-(t), Q^\zeta(t), \eta(t), Q^\eta(t))_{t\ge 0}$
coupled as  in Theorem \ref{th:newcoupling}, where 
$\zeta$ starts with Bernoulli($\rho$) occupations away
from the origin and  initially $Q^\zeta(0)=Q^\eta(0)=0$. 
Below apply the triangle inequality and use inequality
   $Q^\zeta(t)\le Q^\eta(t)$ from   Theorem \ref{th:newcoupling}. 
Thus continuing from above: }
&\qquad =
\Ev\lvert Q^\eta(t)-Q^\zeta(t)+Q^\zeta(t)-V^\rho t\,\rvert + 4u\nn\\
&\qquad \le
\Ev\bigl\{ Q^\eta(t)-Q^\zeta(t)\bigr\} +
\Ev\lvert Q^\zeta(t)-V^\rho t\,\rvert + 4u\nn\\
&\qquad= 
V^\lambda t-V^\rho t + \Psi(t) + 4u\nn\\
&\qquad = 2\theta t(\rho-\lambda) + \Psi(t) + 4u\nn\\
&\qquad = \Psi(t) + 5u. 
\label{eq:aux9}
\end{align}  
  Marginally the process
$(\zeta, Q^\zeta)$ is the same as the process
$(\om, Q)$ in the coupling of this section, 
hence the appearance of $\Psi(t)$ above. Then 
we used \eqref{eq:EQH'a} for the expectations of the second class
particles and  the  choice 
\eqref{eq:lachoice} of $\lambda$. 

Insert bounds \eqref{eq:aux7} and \eqref{eq:aux9}
into \eqref{line6} to get
\begin{align}
\Pv\{  J^\om_{\fl{V^\rho t}+u}(t) \; - \; J^\eta_{\fl{V^\rho t}+u}(t)
>-k\}
\le C\theta^2
\Bigl(\,\frac{t^2}{u^4}\Psi(t) + \frac{t^2}{u^3}\,\Bigr).
\label{eq:aux13} \end{align}

Insert \eqref{eq:kprop}  and \eqref{eq:aux13} into line \eqref{line2} 
to get 
\be 
\Pv\{Q(t)\le V^\rho t-u\} \le 
 C\bias^2 \Bigl(\,\frac{t^2}{u^4}\Psi(t) + \frac{t^2}{u^3}
\Bigr) + e^{-u^2/20t} 
\label{eq:aux14}\ee
and we have verified  \eqref{eq:UBge1/2}
for {\bf Case 2}.  

\medskip

 Combining   \eqref{eq:aux14}  and  
 \eqref{eq:aux15.5}  
gives the conclusion  of Lemma \ref{lm:UB1}. 
\end{proof} 

Next we extend the bound to both tails. 

\begin{lemma}   Let $B\in(0,\infty)$.  
 Then there exists a 
numerical  constant
$C\in(0,\infty)$  and another constant $c_0(B)\in(0,\infty)$
    such that, for
all densities  $0< \rho< 1$ and  
  $t\ge c_0(B)\bias^{-1}$,  
\be\begin{aligned} 
&\Pv^\rho\{\,\abs{Q(t)- V^\rho t}\ge u\} \\[4pt]
& \le \begin{cases} 
 C\bias^2 \Bigl(\,\dfrac{t^2}{u^4}\Psi(t) + \dfrac{t^2}{u^3} \Bigr)+2e^{-u^2/Ct}, 
 &B\bias^{2/3} t^{2/3}\le  u\le 20t/3\\[9pt]
2e^{-u/C}, 
&u\ge 20 t/3.  \end{cases}  \end{aligned}
\label{eq:UB2}\ee
 \label{lm:UB2}
\end{lemma}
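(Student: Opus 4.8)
The plan is to obtain the lower tail $\Pv^\rho\{Q(t)\le V^\rho t-u\}$ from Lemma \ref{lm:UB1} by the particle--hole symmetry of ASEP, and then combine it with the already-established upper-tail bound \eqref{eq:UBge1/2} via a union bound. Since Lemma \ref{lm:UB1} holds uniformly over densities, the only real work is to set up this symmetry correctly at the level of the coupled pair $(\eta,Q)$.

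First I would introduce the involution on configurations $\eta\mapsto\widetilde\eta$ given by $\widetilde\eta_i=1-\eta_{-i}$ (exchange particles and holes, then reflect the lattice). A check of rates across each edge shows that if $\eta$ is a $(p,q)$-ASEP then so is $\widetilde\eta$: a particle jumping right at rate $p$ is, in the reflected picture, a hole jumping left, i.e.\ a particle jumping right, again at rate $p$, and symmetrically for rate-$q$ jumps. Under this map i.i.d.\ Bernoulli($\rho$) becomes i.i.d.\ Bernoulli($1-\rho$), the origin is fixed, and if $\eta^+=\eta+\delta_Q$ is the companion process with a second class particle at $Q(t)$, then $\widetilde{\eta^+}=\widetilde\eta-\delta_{-Q}$, so $\widetilde\eta\ge\widetilde{\eta^+}$ and $(-Q)$ is a genuine second class particle for the lower process $\widetilde{\eta^+}$, which has density $1-\rho$. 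Hence $(\eta,Q)$ under $\Pv^\rho$ has the same law as $(\eta,-Q)$ under $\Pv^{1-\rho}$. Because $V^{1-\rho}=(p-q)(2\rho-1)=-V^\rho$, this yields
\[
\Pv^\rho\{Q(t)\le V^\rho t-u\}=\Pv^{1-\rho}\{Q(t)\ge V^{1-\rho}t+u\}
\qquad\text{and}\qquad
\Psi^{1-\rho}(t)=\Psi^\rho(t),
\]
where $\Psi^\rho(t)=\Ev^\rho\lvert Q(t)-V^\rho t\rvert$ as in \eqref{def:Psi}; the second identity holds because $\lvert{-Q(t)}-(-V^\rho t)\rvert=\lvert Q(t)-V^\rho t\rvert$.

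Then I would apply Lemma \ref{lm:UB1} twice: at density $\rho$ it bounds $\Pv^\rho\{Q(t)\ge V^\rho t+u\}$ by the right side of \eqref{eq:UBge1/2}, and at density $1-\rho$ (its constants $C$ and $c_1(B)$ being density-independent) it bounds $\Pv^{1-\rho}\{Q(t)\ge V^{1-\rho}t+u\}$ by the same expression with $\Psi$ replaced by $\Psi^{1-\rho}=\Psi^\rho$. Adding the two bounds and using $\Pv^\rho\{\lvert Q(t)-V^\rho t\rvert\ge u\}\le\Pv^\rho\{Q(t)\ge V^\rho t+u\}+\Pv^\rho\{Q(t)\le V^\rho t-u\}$ gives \eqref{eq:UB2}: the polynomial term doubles (absorb the $2$ into $C$), each exponential term picks up the stated factor $2$, and one may take $c_0(B)=c_1(B)$ (or any larger value) since both applications require the same threshold $t\ge c_1(B)\theta^{-1}$ over the same $u$-ranges $B\theta^{2/3}t^{2/3}\le u\le 20t/3$ and $u\ge 20t/3$.

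The main obstacle I anticipate is the careful verification of the particle--hole/reflection symmetry for the coupled pair $(\eta,Q)$ — that it carries a $(p,q)$-ASEP with a second class particle at density $\rho$ to one at density $1-\rho$ with $Q\mapsto-Q$ and $V^\rho\mapsto-V^\rho$. This is a routine generator/rate computation once one observes that the second class particle of $\eta^+\ge\eta$ becomes, after the map, the discrepancy between the ordered pair $\widetilde{\eta^+}\le\widetilde\eta$, hence again a bona fide second class particle from the point of view of the lower process; everything after that is bookkeeping with constants.
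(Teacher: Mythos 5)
Your proposal is correct and matches the paper's approach exactly: the paper states the lower tail is obtained from \eqref{eq:UBge1/2} "by a particle-hole interchange followed by a reflection of the lattice" (deferring details to \cite{bala-sepp-08ALEA}), which is precisely the involution $\widetilde\eta_i=1-\eta_{-i}$ you spell out, together with the observations $V^{1-\rho}=-V^\rho$, $\Psi^{1-\rho}=\Psi^\rho$, and density-independence of Lemma~\ref{lm:UB1}'s constants.
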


\begin{proof}  The corresponding lower tail bound is obtained
from \eqref{eq:UBge1/2} by a particle-hole interchange followed by
a reflection of the lattice.  For details we refer to Lemma 5.3
in \cite{bala-sepp-08ALEA}.  
\end{proof}

 \begin{proof}[Proof of the upper bound of Theorem \ref{Qmomthm}]
We integrate \eqref{eq:UB2} to get the  bound \eqref{Qmom}  on the moments
of the second class particle.  First for $m=1$.  
\begin{align*}
\Psi(t)&= \int_0^\infty 
\Pv^\rho\{\,\lvert Q(t) -V^\rho t\rvert \ge u\}\,du\\
&\le 
B\bias^{2/3}t^{2/3} 
+ C\bias^2  \int_{B\bias^{2/3}t^{2/3} }^\infty
\Bigl(\,\frac{t^2}{u^4}\Psi(t) + \frac{t^2}{u^3}\,\Bigr)\,du\\
& \qquad \qquad +  2\int_{B\bias^{2/3}t^{2/3} }^\infty  e^{-u^2/Ct} \,du  + 
2 \int_{20t/3}^\infty  e^{-u/C}\,du 
  \\
&\le \frac{C}{3B^{3}}\Psi(t) + 
\Bigl(B+\frac{C}{2B^2} \Bigr) \bias^{2/3}t^{2/3}  \\
&\qquad \qquad +\frac{C_1(B)t^{1/3}}{\bias^{2/3}} e^{-\bias^{4/3}t^{1/3}/C_1(B)}  
+ 2C e^{-t/C}.
\end{align*}
$C_1(B)$ is a new constant that depends on $B$.
Set $B=C^{1/3}$  to turn the above inequality into 
\begin{align*}
\Psi(t)&\le \frac{9C^{1/3}}4 \bias^{2/3}t^{2/3}  
+ \frac{C_1t^{1/3}}{\bias^{2/3}} \exp\Bigl(\frac{-\bias^{4/3}t^{1/3}}{C_1} \Bigr) 
+ 2C e^{-t/C}.
\end{align*}
The second term on the right above forces us to restrict  $t$ further.  
We can fix a constant $c_0$ large enough so that,
 for a new constant $C$, 
\be
\Psi(t)\le C \bias^{2/3}t^{2/3}   
\quad\text{provided $t\ge c_0\bias^{-4} $.}  \label{eq:Psibd2}\ee
Restrict to $t$ that satisfy this requirement and  substitute this bound on $\Psi(t)$ 
into \eqref{eq:UB2}. 
Then upon using $u\ge B\bias^{2/3}t^{2/3} $  and redefining
$C$ once more, we have 
   for $B\bias^{2/3}t^{2/3} \le  u\le 20t/3$:
\be \begin{aligned} 
\Pv^\rho\{\,\lvert Q(t) -V^\rho t\rvert \ge u\} \le 
 C\frac{\bias^{2} t^2}{u^3} + 2e^{-u^2/Ct}. 
 \end{aligned}\label{eq:aux18} \ee
  
Now take $1<m<3$ 
and use   \eqref{eq:aux18} together with the second case of \eqref{eq:UB2}:
\begin{align}
&\Ev^\rho\lvert Q(t)-V^\rho t\rvert^m 
=m \int_0^\infty \Pv^\rho\{\,\lvert Q(t) -V^\rho t\rvert \ge u\}
u^{m-1}\,du\nn\\
&\qquad \le 
B^m \bias^{2m/3} t^{2m/3} \;+\; Cm\bias^{2} t^2 \int_{B\bias^{2/3}t^{2/3} }^\infty
{u^{m-4}}\,du \nn\\
&\qquad \;+\;2m\int_{B\bias^{2/3}t^{2/3} }^\infty  e^{-u^2/Ct} u^{m-1}\,du
\;+\;2m\int_{20t/3}^\infty e^{-u/C} u^{m-1}\,du.\nn
\end{align}
Performing and approximating the integrals  gives  
\[
\Ev^\rho\lvert Q(t)-V^\rho t\rvert^m  \le 
 \frac{C}{3-m} \bias^{2m/3} t^{2m/3}    \]
 provided $t\ge c_0\bias^{-4}$ for a large enough constant $c_0$.    
 \end{proof}  
 
 \subsection{Proof of the lower bound for second class  particle moments}  \label{aseplbsec}

By Jensen's inequality it suffices to prove the lower bound for $m=1$. 
 Let  $C_{UB}$ denote the constant in the upper bound statement that we just proved.  
 We can also assume $c_0\ge 1$.  
 Fix a constant $b>0$ and  set  
\begin{align*}
a_1=2C_{UB}+1  
\quad\text{and}\quad 
a_2=8 + \sqrt{32 b}+8 \sqrt{C_{UB}}.  \end{align*}
Increase  $b$ if necessary so that 
\be  b^2-2a_2\ge 1.  \label{ba_2}\ee

Fix a density $\rho\in(0,1)$ and define an auxiliary density 
$\lambda=\rho-bt^{-1/3}\bias^{-1/3}$. 
 Define  positive
integers 
\be
u=\fl{a_1t^{2/3}\bias^{2/3}} \quad\text{and}\quad 
n=\fl{V^\lambda t}-\fl{V^\rho t}+u.
\label{def:n}\ee
By taking $c_0$ large enough in the statement of Theorem \ref{Qmomthm} 
we can ensure that $\lambda\in(\rho/2,\rho)$ and $u\in\N$.

Construct a basic coupling of
three processes $\eta\le\eta^+\le \zeta$ with the following initial state:

\medskip

(a) Initially $\eta$ has
i.i.d.~Bernoul\-li($\lambda$) occupations 
$\{\eta_i(0):i\ne -n\}$ and $\eta_{-n}(0)=0$. 

(b)  Initially $\eta^+(0)=\eta(0)+\delta_{-n}$. 
$Q^{(-n)}(t)$ is the location of the unique
discrepancy between $\eta(t)$ and $\eta^+(t)$. 

(c)  Initially $\zeta$ has independent 
occupation variables, coupled with $\eta(0)$
as follows: 

(c.1) $\zeta_i(0)=\eta_i(0)$ for $-n<i\le 0$.

 (c.2) $\zeta_{-n}(0)=1$.

 (c.3)   For  $i< -n$ and $i>0$ variables $\zeta_i(0)$ 
are i.i.d.~Bernoulli($\rho$) and 
$\zeta_i(0)\ge\eta_i(0)$. 

 Thus the initial density of $\zeta$ is piecewise
constant:  on the segment 
$\{-n+1,\dotsc,0\}$ $\zeta(0)$ is i.i.d.\ with
density $\lambda$, at site $-n$ $\zeta(0)$ has density $1$,
and elsewhere on $\bZ$ $\zeta(0)$ is i.i.d.\ with
density $\rho$.   The reason for   the gap in the $\zeta-\eta$ second class
particles across $(-n,0]$ is to get an upper bound on the second-class particle
current that is not too large for subsequent arguments (\eqref{eq:aux37} below).  

\medskip

Label the $\zeta-\eta$ second class particles
as $\{Y_m(t):m\in\bZ\}$ 
so that initially 
\[
\dotsm< Y_{-1}(0)<Y_{0}(0)= -n=Q^{(-n)}(0)<0 < Y_{1}(0)<Y_{2}(0)<\dotsm
\]
Let again $\Qlb(t)$ be the label such that 
 $Q^{(-n)}(t)=Y_{\Qlb(t)}(t)$. 
Initially $\Qlb(0)=0$. 
The inclusion 
$Q^{(-n)}(t)\in\{Y_m(t)\}$ 
  persists for all
time because the basic coupling preserves the 
ordering $\zeta(t)\ge\eta^+(t)$. 
Through the basic coupling
 $\Qlb$ jumps to the left
with rate $q$ and to the right with rate $p$, but only
when there is a $Y$-particle  adjacent to $Y_{\Qlb}$.
As in the proof of Lemma \ref{lm:mQ} we can apply
Lemma \ref{lm:RW} to prove this statement: 
\be 
\Pv\{\Qlb(t)\le -k\} \le e^{-2\theta k}.
\label{mQ2} \ee

By the upper bound already proved and by the choice of $a_1$,  
\be\begin{aligned}
&\Pv\{ Q^{(-n)}(t)\ge \fl{V^\rho t}\}
=\Pv\{ Q^{(-n)}(t)\ge -n+\fl{V^\lambda t} +u\} \\
&\qquad \le  u^{-1}  \Ev\abs{ Q^{(-n)}(t)- n-\fl{V^\lambda t} } 
\le \frac{C_{UB}t^{2/3}\bias^{2/3}}{\fl{a_1t^{2/3}\bias^{2/3}}}\\
 &\qquad \le \tfrac12. 
\end{aligned}\label{lb:ucond1}\ee
 
 For the complementary event we get a lower bound:
\be\begin{split}
\tfrac12 &\le \Pv\{ Q^{(-n)}(t)\le  \fl{V^\rho t}\}\\
&\le \Pv\{m(t)\le -k\}
+\Pv\{ J^\zeta_{\fl{V^\rho t}}(t)-J^\eta_{\fl{V^\rho t}}(t) \le k\}. 
\end{split}\label{eq:aux36}\ee
The reasoning behind the second inequality above is this.  
If $Q^{(-n)}(t)\le \fl{V^\rho t}$ and  $\Qlb(t)>-k$ then 
 $Y_{-k}(t)\le  \fl{V^\rho t}$. This  implies a bound on the second class particle current:
\be 
J^\zeta_{\fl{V^\rho t}}(t)-J^\eta_{\fl{V^\rho t}}(t)
=J^{\zeta-\eta}_{\fl{V^\rho t}}(t)\le k.
\label{eq:aux37}\ee

 Put $k=\fl{a_2t^{1/3}\bias^{1/3}}-2$.  Then by   $t\ge \bias^{-4}$ and the definition of $a_2$,
 \be \Pv\{\Qlb(t)\le -k\}\le e^{-2}<1/4.\label{eq:aux38}\ee
   
 Combine   \eqref{eq:aux36} and \eqref{eq:aux38} 
and split the probability:   
\begin{align}
\tfrac14&\le 
\Pv\{ J^\zeta_{\fl{V^\rho t}}(t)-J^\eta_{\fl{V^\rho t}}(t) \le
a_2t^{1/3}\bias^{1/3}-2\} \nn\\
&\le 
\Pv\{ J^\zeta_{\fl{V^\rho t}}(t) \le 2a_2t^{1/3}\bias^{1/3} 
+t{\bias}(2\rho\lambda-\lambda^2)  \} 
\nn\\
&\qquad + 
\Pv\{ J^\eta_{\fl{V^\rho t}}(t) \ge  a_2t^{1/3}\bias^{1/3} 
+t{\bias}(2\rho\lambda-\lambda^2)+2\}. 
\label{line19}
\end{align}

Consider next line \eqref{line19}.   The 
$\eta$-process can be coupled with a stationary $P^\lambda$-process
with at most one discrepancy. The mean current in
the stationary process is  
\begin{align*}
E^\lambda[ J_{\fl{V^\rho t}}(t)]
&= tH(\lambda)-\lambda\fl{V^\rho t} \\
&\le tH(\lambda)-\lambda{V^\rho t}+1
=t{\bias}(2\rho\lambda-\lambda^2)+1. 
\end{align*}
Hence 
\begin{align}
&\text{line \eqref{line19}} 
\le 
P^\lambda\{ J_{\fl{V^\rho t}}(t) \ge  a_2t^{1/3}\bias^{1/3} 
+t{\bias}(2\rho\lambda-\lambda^2)+1\}\nn\\[3pt]
&\le P^\lambda\bigl\{ \overline J_{\fl{V^\rho t}}(t) \ge  a_2t^{1/3}\bias^{1/3} \bigr\}
\le a_2^{-2}t^{-2/3}\bias^{-2/3}
\Var^\lambda\bigl[ J_{\fl{V^\rho t}}(t)\bigr]\nn\\[3pt]
&\le \frac{\Ev^\lambda\lvert Q(t)-\fl{V^\rho t}\rvert}{a_2^2t^{2/3}\bias^{2/3}}
\; \le \; \frac{\Ev^\lambda\lvert Q(t)-V^\lambda t\rvert}{a_2^2t^{2/3}\bias^{2/3}}
+ \frac{2 b }{a_2^2 }
+ \frac{ 1}{a_2^2t^{2/3}\bias^{2/3}}\nn\\
&\le C_{UB}a_2^{-2} + \tfrac1{16} + \tfrac1{64}\le \tfrac18.
\label{line22}
\end{align} 
After Chebyshev above  we applied the basic identity 
\eqref{VarJQ}  for which we introduced a second class particle 
$Q(t)$ in a density-$\lambda$ system under the measure
 $\Pv^\lambda$. 
Then 
we replaced $\fl{V^\rho t}$ with $V^\lambda t$ and    applied  the upper
bound    and 
properties of $a_2$.  

Put this last bound back into line \eqref{line19} to be left with  
\be
\tfrac18\le 
\Pv\{ J^\zeta_{\fl{V^\rho t}}(t) \le 2a_2t^{1/3}\bias^{1/3} 
+t{\bias}(2\rho\lambda-\lambda^2) \}. 
\label{line24}\ee

Next we replace the $\zeta$-process with
a stationary density-$\rho$ process by inserting the
Radon-Nikodym factor.  Let $\gamma$
denote the distribution of the initial $\zeta(0)$ 
configuration described by (c1)--(c3) in the beginning
of this section. As before $\nu^\rho$ is the
density-$\rho$ i.i.d.~Bernoulli measure.  The Radon-Nikodym
derivative is 
\[
f(\omega)=\frac{d\gamma}{d\nu^\rho}(\omega)= 
\frac1{\rho}\ind\{\omega_{-n}=1\}\cdot
\prod_{i=-n+1}^0\Bigl( \frac{\lambda}{\rho}\ind\{\omega_i=1\}
+ \frac{1-\lambda}{1-\rho}\ind\{\omega_i=0\}\Bigr).
\]
Bound its second moment: 
\be\begin{aligned}
E^\rho(f^2)&=\frac1\rho\Bigl(1+\frac{(\rho-\lambda)^2}{\rho(1-\rho)}
\Bigr)^n 
\le \rho^{-1}e^{n(\rho-\lambda)^2/\rho(1-\rho)}
\le  c_2(\rho) 
\end{aligned} \label{def:c1}\ee
where condition $t\ge c_0\bias^{-4}$ implies a bound  $c_2(\rho)<\infty$  
 independent of $t$ and $\bias$. 
 
Let $\mathcal A$ denote the exclusion process event
\[
\mathcal A=\{ J_{\fl{V^\rho t}}(t) \le 2a_2t^{1/3}\bias^{1/3} 
+t{\bias}(2\rho\lambda-\lambda^2)  \}.
\]
  Then from \eqref{line24}
\begin{align}
\tfrac18&\le \Pv\{\zeta\in \mathcal A\}
= \int P^\omega(\mathcal A)\,\gamma(d\omega) 
= \int P^\omega(\mathcal A)f(\omega)\,\nu^\rho(d\omega)\nn\\
&\le \bigl(P^\rho(\mathcal A)\bigr)^{1/2}\bigl(E^\rho(f^2)\bigr)^{1/2}
\le c_2(\rho)^{1/2} \bigl(P^\rho(\mathcal A)\bigr)^{1/2}.
\label{line26} 
\end{align}

 Note the stationary mean  
\begin{align*}
E^\rho\bigl[J_{\fl{V^\rho t}}(t)\bigr]
= t\flux(\rho)-\rho\fl{V^\rho t}
=t{\bias}\rho^2+\rho V^\rho t-\rho\fl{V^\rho t}
\ge t{\bias}\rho^2.
\end{align*}
 Continue from line \eqref{line26}, recalling    
\eqref{ba_2}:   
\begin{align*}
(64c_2(\rho))^{-1}&\le P^\rho(\mathcal A)= P^\rho\{ J_{\fl{V^\rho t}}(t) \le 2a_2t^{1/3}\bias^{1/3} 
+t{\bias}(2\rho\lambda-\lambda^2) \}\\
&\le P^\rho\{ \,\overline J_{\fl{V^\rho t}}(t) \le 2a_2t^{1/3}\bias^{1/3} 
-t{\bias}(\rho-\lambda)^2 \}\\
&= P^\rho\{ \,\overline J_{\fl{V^\rho t}}(t) 
\le -(  b^2-2a_2)t^{1/3}\bias^{1/3} \}  \\ 
&\le P^\rho\{ \, \overline J_{\fl{V^\rho t}}(t) 
\le -t^{1/3}\bias^{1/3} \}\\
&\le  t^{-2/3}\bias^{-2/3}
\Var^\rho \bigl[J_{\fl{V^\rho t}}(t)\bigr] \; \le \; t^{-2/3} \bias^{-2/3}  \Ev^\rho\abs{Q(t)-V^\rho t}. 
\end{align*}

 This completes the proof of  the lower bound.    We finish with some observations 
 about the need for the two key assumptions, asymmetry and $\flux''(\rho)\ne 0$. 
 
For symmetric 
 SEP $\bias=0$ and consequently the Chebyshev step above cannot be taken. 
  
  To observe where   $\flux''(\rho)<0$ came in we need to backtrack a little.   At stage
\eqref{line24}  we have the inequality (ignoring now small errors due to integer parts etc.)
\[  
\tfrac18\le 
\Pv\bigl\{ J^\zeta_{\fl{V^\rho t}}(t) \le 2a_2t^{1/3}\bias^{1/3} 
+ E^\lambda(J_{\fl{V^\rho t}}(t) )  \bigr\}.  \]
The Radon-Nikodym and Schwarz trick turned this into an inequality for
a stationary process:
\be \begin{aligned} 
0<c&\le 
P^\rho\bigl\{ J_{\fl{V^\rho t}}(t) \le 2a_2t^{1/3}\bias^{1/3} 
+ E^\lambda(J_{\fl{V^\rho t}}(t) )  \bigr\} \\[3pt]
&=
P^\rho\bigl\{ \overline J_{\fl{V^\rho t}}(t) \le 2a_2t^{1/3}\bias^{1/3} 
+ E^\lambda(J_{\fl{V^\rho t}}(t) )  - E^\rho(J_{\fl{V^\rho t}}(t) )  \bigr\}.
\end{aligned}\label{line30}\ee
Compute the means on the right-hand side inside the probability, remembering
that $V^\rho=\flux'(\rho)$ and  Taylor expanding
$\flux(\lambda)$:
\begin{align*} 
E^\lambda(J_{\fl{V^\rho t}}(t) )  - E^\rho(J_{\fl{V^\rho t}}(t) ) 
&=t\bigl[\flux(\lambda)-\lambda \flux'(\rho) -\flux(\rho)+\rho \flux'(\rho)\bigr]\\[4pt]
& = t\bigl[  \tfrac12 \flux''(\rho)(\lambda-\rho)^2  + O\bigl( \bias\abs{\lambda-\rho}^3\bigr) \bigr]\\[3pt]
&= -a_3b^2 t^{1/3}\bias^{1/3} + O(1) 
\end{align*}
 with $\frac12\flux''(\rho)=-a_3\bias<0$ in the last step.  Thus the 
 constants can be adjusted so that the probability in \eqref{line30} is a 
 deviation. Chebyshev can be applied to conclude that the current variance
 is of order $t^{2/3}\bias^{2/3}$.    But if $\flux''(\rho)=0$ there is no
 deviation to take advantage of.

\subsection*{Further comments and references}
 The proofs of this chapter are based on  \cite{bala-sepp-08ALEA}.  This
 article gives simpler proofs for the results in  
   \cite{bala-sepp-07JSP} and  \cite{bala-sepp-aom}.
 Precursors of these variance bounds  were first proved for last-passage models that 
 correspond to totally asymmetric versions of particle systems:
 in \cite{cato-groe-06}  for the Hammersley process and in \cite{bala-cato-sepp}
 for the corner growth model associated with TASEP. 

The Tracy-Widom type limit distribution for TASEP current was first proved for the step
initial condition in \cite{joha},  then for the stationary case (Theorem \ref{taseptwthm})
 in \cite{ferr-spoh-06}. 
The larger picture of TASEP fluctuations from various initial conditions is presented
in \cite{bena-corw}.  

Another line of work has produced comparison theorems that allow one to conclude
that Laplace transforms of $t^{-1}\Vv^\rho[Q(t)]$ for different asymmetric exclusion
processes are within constant multiples of each other.  In this sense,  for 
 the order of this Laplace transform there is universality for all finite range
asymmetric exclusion processes.  These results come from resolvent techniques
\cite{seth-03, quas-valk-cmp07, quas-valk-08}. 

The central limit theorem for the current in directions other than the characteristic 
$V^\rho$  was proved first by Ferrari and Fontes \cite{ferr-font-94a}.    This was generalized to other
particle systems such as certain zero-range and  bricklayer  processes
by Bal\'azs \cite{bala-03}. 

Consider symmetric simple exclusion, namely the case $p=q=1/2$.   
Then $V^\rho=0$.  
Equation \eqref{VarJQ} together with the 
 observation that   the second class particle is a simple symmetric random
walk tell us that $\Var^\rho[J_0(t)]$ is of order $t^{1/2}$, exactly as for independent 
particles in Section \ref{iidch}.  And indeed the current process does converge
to fractional Brownian motion (see \cite{peli-seth-08} and its references).

\section{Zero range process} 
\setcounter{equation}{0}
\subsection{Model and results} 

From the perspective of universality it would be highly desirable to extend
the results of Section \ref{asepch}  beyond exclusion processes.   Throughout the 40-year history
of the subject of interacting particle systems, the {\sl zero range process}  has been
a much-studied relative of the   exclusion process.   In this section we indicate
how the bounds for second class particles and current variance are proved for 
a class of totally asymmetric zero range processes (TAZRP) with concave jump rate functions. 
 
\medskip

{\bf Definition and graphical construction.}
  In contrast with the exclusion process,
the zero range process does not restrict the number of particles allowed at a site.  
The state of the process at time $t$ is $\eta(t)=(\eta_i(t))_{i\in\bZ}\in\bZ_+^\bZ$ where
$\eta_i(t)\in\bZ_+$ denotes the number of particles present at site $i$ at time $t$.
 We consider the case where particles take  only nearest-neighbor jumps to the right. 

Each zero range process is characterized by a jump rate function 
$\jr:\bZ_+\to\bR_+$.  It automatically has  the value $\jr(0)=0$.  The meaning of $\jr$ for the
process is that when the current state is $\eta$,  $\jr(\eta_i)$ is the rate at which
one  particle is moved from site $i$ to $i+1$.    You can interpret this as saying
that each of the particles at site $i$ jumps independently with rate $\eta_i^{-1}\jr(\eta_i)$
or that some particular one moves next (say, the bottom one is moved to the top of the
next pile) at rate $\jr(\eta_i)$.
It is immaterial for we do not label our particles. (Except again we will label
certain second class particles as we did for ASEP, but we will come to that later.) 
These jump events take place independently at all sites, exactly as for ASEP. 

We shall assume that 
\be \text{ $\jr$ is nondecreasing and  $0<\jr(k)\le 1$ for $k>0$. }\label{jrass}\ee
With a bounded $\jr$  we can perform the following concrete 
construction of the process $\eta(t)$  in terms of independent rate $1$ Poisson processes $\{N_i\}$ 
and i.i.d.\ Uniform$(0,1)$  variables $\{U_{i,k}\}$.  Attach clock $N_i$ to
site $i$, and give each jump time of $N_i$ its own   $U_{i,k}$. Now 
if $t$ is a  jump time for $N_i$ with its uniform $U_{i,k}$, then 
move one particle from $i$ to $i+1$  if $U_{i,k}<\jr(\eta_i(t-))$, otherwise not.  
Repeat this step at all sites and all jump times.  
The result is the desired one: independently at each 
site $i$,  a jump from $i$ to $i+1$ occurs in the next small  time interval $(t,t+dt)$
with probability $\jr(\eta_i(t))dt+O(dt^2)$. 
 
  The generator of this TAZRP is 
\[ L\varphi(\eta) =\sum_{i\in\bZ} \jr(\eta_i)[\varphi(\eta^{i,i+1})-\varphi(\eta)] \]
that acts on   bounded cylinder functions $\varphi$ on $\bZ_+^\bZ$
and  $\eta^{i,i+1}=\eta-\delta_i+\delta_{i+1}$.  We will not use the 
generator in the text.   It can be  used to check the invariance
of certain distributions on the state space, as for ASEP in Remark
\ref{asepinvrem}.  

\medskip

{\bf Invariant distributions.} 
Part of the reason for the popularity of ZRP is that, just like ASEP, it has
  i.i.d.~invariant distributions. We denote  these by 
$\{\nu^\rho\}_{0\le \rho<\infty}$ indexed by  
 density $\rho=E^\rho(\eta_i)$. 
 
 Here is the definition of these measures. 
  Let $\theta$ denote a real parameter, and on $\bZ_+$ define a probability distribution  
  \[\lambda^\theta(k)=\frac1{Z_\theta}\cdot\frac{e^{\theta k}}{\jr(k)!}\,,  \]
defined  for    $\theta$ such that  
\[ Z_\theta=\sum_k \frac{e^{\theta k}}{\jr(k)!} <\infty .  \] 
Here $\jr(0)!=1$  and $\jr(k)!=\jr(1)\dotsm\jr(k)$ for $k>0$. 
Define the mean density function $ \rho(\theta)=\sum_k k \lambda^\theta(k) .$  It is 
smooth and strictly increasing on the open interval where $Z_\theta<\infty$.  
Let its inverse function be $\theta(\rho)$ and then 
reparametrize the distributions  in terms of density:  
\[  \nu_0^\rho(k)=\lambda^{\theta(\rho)}(k)
=\frac1{Z_{\theta(\rho)}}\cdot\frac{e^{\theta(\rho) k}}{\jr(k)!}\,.    \]  
 Finally, the actual invariant measures for ZRP are the 
product measures on the state space $\bZ_+^\bZ$:  
 \be\nu^\rho(d\eta)=\bigotimes_{i\in\bZ} \nu^\rho_0(d\eta_i). \label{zrpnurho}\ee
We write $P^\rho$ for probabilities and $E^\rho$ for expectations for the 
stationary process whose marginal $\eta(t)$ has distribution $\nu^\rho$.  

\medskip

{\bf Basic coupling and second class particles.}
  {\sl Basic coupling} works exactly as it did for   exclusion processes:  
 two (or more) zero range processes  obey a common set
of Poisson clocks  $\{N_i\}$ and uniform variables $\{U_{i,k}\}$.  
 
We write a boldface $\Pv$ for the probability measure
when more than one process are coupled together. 
In particular, $\Pv^\rho$ represents the
situation where the initial occupation 
variables  $\eta_i(0)=\eta^+_i(0)$
are i.i.d.~mean-$\rho$ Bernoulli for $i\ne 0$, and the
second class particle $Q$ starts at $Q(0)=0$.   

More generally, if two processes $\eta$ and $\om$ are in basic
coupling and $\om(0)\ge\eta(0)$ (by which we mean coordinatewise
ordering $\om_i(0)\ge\eta_i(0)$ for all $i$) then the ordering
$\om(t)\ge\eta(t)$ holds for all $0\le t<\infty$.  The effect of
the basic coupling is to give priority to the $\eta$ particles
over the $\om-\eta$ particles. Consequently we can think
of the $\om$-process as consisting of  first class particles 
(the $\eta$ particles) and second class particles 
(the $\om-\eta$ particles).

\medskip

{\bf Current.} 
The current is defined as for ASEP:  for $x\in\bZ$ and $t>0$,
$J_x(t)$ is  the net left-to-right
particle current across the straight-line space-time path 
from $(1/2,0)$ to $(x+1/2,t)$.

 The flux function is again 
\begin{align*}
\flux(\rho)= E^\rho[\text{rate of particle flow across a fixed edge}] 
=E^\rho[\jr(\eta_i)]. 
\end{align*}
Expectations of  currents can be computed as for ASEP: 
 \be
E^\rho[J_x(t)]=t\flux(\rho)-x\rho,  \qquad x\in\bZ, t\ge 0.  
\label{zrpEJ}\ee

The {\sl characteristic speed} at density $\rho$ is defined the same way as before:
\be
V^\rho=\flux'(\rho). 
\label{def:zrpVrho}\ee

As for ASEP, the first task is to establish the   identities that connect current
variance and the second class particle. 
 These identities  for ZRP develop the same way as for ASEP, except 
that a new initial distribution for the coupled process appears.   Define  a
probability distribution $\hat\nu_0^\rho$ on $\bZ_+$ by 
\[    \hat\nu^\rho_0(k) = \frac1{\Var^\rho(\eta_0)} \sum_{m=k+1}^\infty (m-\rho)\nu^\rho_0(m),
\qquad k\in\bZ_+. \]
Define  a product distribution $\hat\nu^\rho$ on the state space $\bZ_+^\bZ$
 that obeys the marginals $\nu_0^\rho$ of the stationary distribution at all sites
 except at the origin where the distribution is $\hat\nu_0^\rho$:
\[\hat\nu^\rho(d\eta)=\Bigl(\bigotimes_{i\ne 0} \nu_0^\rho(d\eta_i)\Bigr)
\otimes \hat\nu^\rho_0(d\eta_0).\] 
Let $\wh\Pv^\rho$ be the probability distribution of a   pair   $(\eta,\eta^+)$ 
that satisfies  $\eta^+(t)=\eta(t)+\delta_{Q(t)}$ (so there is one discrepancy), 
  obeys basic coupling, 
and whose initial distribution is such that $\eta(0)\sim\hat\nu^\rho$ and 
$\eta^+(0)=\eta(0)+\delta_{0}$  (in other words,  $Q(0)=0$).

\begin{theorem}  
For any density $0<  \rho<\infty$, 
$z\in\bZ$ and $t>0$ we have these formulas.    
\be
\Var^\rho[J_z(t)]=\sum_{j\in\bZ} \abs{j-z} \Cov^\rho[\eta_j(t),\eta_0(0)],  
 \label{zrpgoal1}\ee
\be
\Cov^\rho[\eta_j(t),\,\eta_0(0)]
= \Var^\rho(\eta_0) \wh\Pv^\rho\{Q(t)=j\}, 
\label{zrpCovQ}\ee
and 
\be
\wh\Ev^\rho[Q(t)]=V^\rho t.
\label{zrpEQH'a}\ee 
\label{zrpthm1} \end{theorem}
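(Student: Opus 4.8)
The plan is to reproduce the three-step pattern of the ASEP proof of Theorem~\ref{asepthm1}, making the single structural change forced by the non-Bernoulli occupation variables: the conditioning step now produces the size-biased (Palm-type) law $\hat\nu^\rho_0$ at the origin instead of a Bernoulli tilt, and the prefactor $\rho(1-\rho)$ is replaced throughout by $\Var^\rho(\eta_0)$. For \eqref{zrpgoal1} I would argue verbatim as for \eqref{goal1}: introduce the finite processes $\om^N$ with initial state $\om^N_i(0)=\om_i(0)\ind\{-N\le i\le N\}$, coupled to the infinite stationary process through the common clocks $\{N_i\}$ and uniforms $\{U_{i,k}\}$; write $J^N_z(t)=\pcount^N_+(t)-\pcount^N_+(0)=\pcount^N_-(0)-\pcount^N_-(t)$ with $\pcount^N_\pm$ the particle counts of $\om^N(t)$ on the two sides of the path $z(\cdot)$; expand $\Var J^N_z(t)$ as the covariance of this difference; discard $\Cov(\pcount^N_+(0),\pcount^N_-(0))=0$ by independence of the initial occupations; and let $N\to\infty$. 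The only input beyond bookkeeping is a uniform-in-$N$ exponential bound on $\abs{\Cov[\om^N_m(t),\om^N_k(s)]}$ for $s\in[0,t]$, which forces the diagonal term $\Cov(\pcount^N_+(t),\pcount^N_-(t))$ to vanish in the limit; this relies on the bounded rates \eqref{jrass} and the exponential tails of $\nu^\rho_0$. Establishing this space-time decay estimate in the non-compact state space $\bZ_+^\bZ$ is the one genuinely technical point, and I expect it to be the main obstacle to a fully rigorous version.

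Identity \eqref{zrpCovQ} is the new ingredient and carries the conceptual weight. For $k\in\bZ_+$ let $g_j(k)=E[\eta_j(t)]$ for the process started from the product $\nu^\rho_0$ away from the origin and exactly $k$ particles at the origin; then the conditional law of $\eta(0)$ given $\eta_0(0)=k$ is precisely this measure, so conditioning on $\eta_0(0)$ and using stationarity ($\sum_k\nu^\rho_0(k)g_j(k)=\rho$ and $\sum_k k\,\nu^\rho_0(k)g_j(k)=E^\rho[\eta_j(t)\eta_0(0)]$) gives $\Cov^\rho[\eta_j(t),\eta_0(0)]=\sum_{k\ge0}(k-\rho)\nu^\rho_0(k)\,g_j(k)$. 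Writing $F(k)=\sum_{m>k}(m-\rho)\nu^\rho_0(m)$, one has $(k-\rho)\nu^\rho_0(k)=F(k-1)-F(k)$, $F(-1)=F(\infty)=0$, and, summing $\ind\{m>k\}$ over $k\ge0$, $\sum_{k\ge0}F(k)=\Var^\rho(\eta_0)$; in particular $\hat\nu^\rho_0(k)=F(k)/\Var^\rho(\eta_0)$ is a bona fide probability mass function on $\bZ_+$ (its nonnegativity following from $F(-1)=0$). A summation by parts turns the covariance into $\sum_{k\ge0}F(k)\bigl(g_j(k+1)-g_j(k)\bigr)$. Finally, coupling the process with $k+1$ particles at the origin to the one with $k$ particles at the origin (identical elsewhere) in the basic coupling leaves a single discrepancy, i.e.\ a second class particle $Q$ with $Q(0)=0$, so $g_j(k+1)-g_j(k)=\Pv\{Q(t)=j\}$ for that base; averaging over $k$ with the weights $\hat\nu^\rho_0(k)$ is, by construction of $\hat\nu^\rho$ and $\wh\Pv^\rho$, exactly $\wh\Pv^\rho\{Q(t)=j\}$, which is \eqref{zrpCovQ}.

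For \eqref{zrpEQH'a} I would differentiate the current formula \eqref{zrpEJ} in the density, $\frac{d}{d\rho}E^\rho[J_z(t)]=t\flux'(\rho)-z=tV^\rho-z$, and compute the same derivative a second way. Parametrizing $\nu^\rho$ through $\theta(\rho)$ and using the exponential-family identity $\frac{d}{d\theta}E^\theta[f]=\Cov^\theta\bigl(f,\sum_{i=-N}^N\eta_i(0)\bigr)$ together with $d\rho/d\theta=\Var^\rho(\eta_0)$, the rearrangement of $\pcount^N_\pm$ used for \eqref{eq:EQH'a}, followed by $N\to\infty$, yields $\frac{d}{d\rho}E^\rho[J_z(t)]=\Var^\rho(\eta_0)^{-1}\sum_{j\in\bZ}(j-z)\Cov^\rho[\eta_j(t),\eta_0(0)]$. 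Substituting \eqref{zrpCovQ} collapses the right side to $\sum_j(j-z)\wh\Pv^\rho\{Q(t)=j\}=\wh\Ev^\rho[Q(t)]-z$, and comparison with $tV^\rho-z$ gives $\wh\Ev^\rho[Q(t)]=V^\rho t$. As in the ASEP case, the differentiability in $\rho$ and the interchange of $d/d\rho$ with the infinite-volume limit would be justified by coupling-based continuity estimates of the type \eqref{cont}; the substantive work, as flagged above, is the uniform decay of space-time correlations needed to pass to the limit in the first and third steps.
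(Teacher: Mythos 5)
Your proposal is correct and follows the same route as the paper: \eqref{zrpgoal1} is proved verbatim as in the ASEP case, your conditioning-and-summation-by-parts derivation of \eqref{zrpCovQ} is precisely the calculation the paper carries out in Section~\ref{zrpvarsec} (your $F(k)$ equals the paper's $F(k)\,\nu_0^\rho(k)$, an immaterial normalization), and your exponential-family differentiation argument for \eqref{zrpEQH'a} is the natural ZRP analogue of the ASEP proof of \eqref{eq:EQH'a}, which the paper points to but explicitly omits. You have, in effect, supplied the omitted third proof along the intended lines.
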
  

Equation \eqref{zrpgoal1} is proved the same way as for ASEP.  
Equation \eqref{zrpCovQ} and the definition of $\hat\nu_0^\rho$ come from
a short calculation which we show below in Section \ref{zrpvarsec}.  
We omit the proof of \eqref{zrpEQH'a}.  
Formulas \eqref{zrpgoal1} and \eqref{zrpCovQ} combine to give the key
equation that links current variance with the second class particle: 
\be 
\Var^\rho[J_{z}(t)]
=\Var^\rho(\eta_0)\wh\Ev^\rho\lvert Q(t)-z\rvert.\label{zrpVarJQ}\ee

Next we state the main result which again consists of upper and lower  moment bounds
for a second class particle, this time under the measure $\wh\Pv^\rho$. 
We need a significant restriction on the concavity of the jump rate $\jr$:
\be
\text{$\exists$  $0<r<1$  such that  $\jr(k+1)-\jr(k)\le r(\jr(k)-\jr(k-1))$.}  
\label{zrpass}\ee
A class of examples satisfying this hypotheses is given by 
 $\jr(k)=1-\exp(-ak^b)$ with $a>0$, $b\ge 1$.    
$\jr(k)$ can also be constant
from some $k_0$ onwards.

\begin{theorem}  Fix a density $0<\rho<\infty$ 
and consider a pair of coupled ZRP's under the measure $\wh\Pv^\rho$.
Assume the jump rate function $\jr$ satisfies \eqref{jrass} and  \eqref{zrpass}. 
Then for $1\le m<3$,   large enough $t\in\bR_+$ and a constant $C$, 
 \be \frac1{C} t^{2m/3} \le 
\wh\Ev^\rho  \bigl[\,\abs{Q(t)-V^\rho t}^m \,\bigr]\le \frac{C}{3-m} t^{2m/3}.   
\label{zrpQmom}\ee
\label{zrpQmomthm}\end{theorem}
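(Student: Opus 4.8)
The argument follows the blueprint of Theorem~\ref{Qmomthm} for ASEP, with the key identity~\eqref{zrpVarJQ} from Theorem~\ref{zrpthm1} playing the role of~\eqref{VarJQ}. By Jensen's inequality the lower bound reduces to $m=1$, and the higher moments $1\le m<3$ in the upper bound follow by integrating a two-sided tail estimate against $u^{m-1}\,du$, exactly as in Section~\ref{asepubsec}. Since the TAZRP is totally asymmetric there is no bias parameter to track, and the random-walk comparisons are simpler than in the ASEP case: a distinguished second class particle gets compared to a \emph{one-sided} nearest-neighbor walk, and the resulting geometric tail bounds take the place of Lemmas~\ref{lm:RW} and~\ref{lm:mQ}. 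Throughout write $\Psi(t)=\wh\Ev^\rho\lvert Q(t)-V^\rho t\rvert$; one first proves $\Psi(t)\le Ct^{2/3}$ and then bootstraps.

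For the upper bound I would reproduce Lemma~\ref{lm:UB1}. Fix an auxiliary density $\lambda=\rho-bt^{-1/3}$ and build a basic coupling of three ZRP's $\om\ge\om^-\ge\eta$, where $(\om,\om^-,Q)$ has the distribution of a coupled pair with a second class particle at the origin (the $\wh\nu^\rho$-type initial law of Theorem~\ref{zrpthm1}), $\om^-=\om-\delta_0$, and $\eta$ starts at density $\lambda$ coupled below $\om$. Label the $\om-\eta$ second class particles $\{X_m\}$ and let $\Qlb$ be the label with $Q=X_{\Qlb}$. Giving $Q$ lowest priority among the second class particles---or introducing auxiliary clocks as in Section~\ref{coupsec}---$\Qlb$ performs a one-sided walk, restricted by the adjacency of the $X_m$'s, so that $\Pv\{\Qlb(t)\ge k\}\le e^{-ck}$. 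Then, using the current decomposition $J^\om_x(t)=J^\eta_x(t)+J^{\om-\eta}_x(t)$, the mean computation
\[
\Ev J^\om_{\fl{V^\rho t}+u}(t)-\Ev J^\eta_{\fl{V^\rho t}+u}(t)
= -\tfrac12 t\flux''(\rho)(\rho-\lambda)^2-u(\rho-\lambda)+O(1)
\]
(from~\eqref{zrpEJ}, $V^\rho=\flux'(\rho)$ and a Taylor expansion of $\flux$), Chebyshev applied to $J^\om_x-J^\eta_x$, and variance bounds on $\Vv[J^\om_x]$ and $\Vv[J^\eta_x]$ obtained from~\eqref{zrpVarJQ} together with a \emph{microscopic-concavity} coupling for ZRP, one reaches a self-improving inequality $\Psi(t)\le Ct^{2/3}+\tfrac12\Psi(t)+(\text{terms exponentially small in }t)$, hence $\Psi(t)\le Ct^{2/3}$, and then the two-sided tail bound analogous to Lemma~\ref{lm:UB2}.

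For the lower bound, reduce to $m=1$ and mimic Section~\ref{aseplbsec}. Couple $\eta\le\eta^+\le\zeta$ with a piecewise-constant profile for $\zeta(0)$: density $\lambda$ on a window $\{-n+1,\dots,0\}$, an extra particle at $-n$, density $\rho$ elsewhere; $\eta$ has density $\lambda$ and $\eta^+=\eta+\delta_{-n}$. The upper bound already proved, Markov's inequality on the displacement of $Q^{(-n)}$, and the one-sided label bound together force $\Pv\{J^\zeta_{\fl{V^\rho t}}(t)-J^\eta_{\fl{V^\rho t}}(t)\le k\}\ge c$ for $k$ of order $t^{1/3}$. The $\eta$-current is controlled by comparison with the stationary density-$\lambda$ process, Chebyshev and~\eqref{zrpVarJQ}; the $\zeta$-current event is then transferred to the stationary density-$\rho$ process by inserting the Radon--Nikodym factor $f=d\gamma/d\nu^\rho$, whose second moment is bounded by $C\exp\bigl(Cn(\rho-\lambda)^2\bigr)<\infty$ since $n(\rho-\lambda)^2=O(1)$ and $\nu_0^\rho$ has geometric tails (a consequence of~\eqref{jrass}). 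A final Chebyshev on $\overline J_{\fl{V^\rho t}}(t)$ under $P^\rho$, using that $\flux$ is strictly concave---so $E^\lambda J-E^\rho J$ is a genuine deviation of order $t^{1/3}$---and the identity $\Var^\rho[J_{\fl{V^\rho t}}(t)]=\Var^\rho(\eta_0)\wh\Ev^\rho\lvert Q(t)-V^\rho t\rvert$ yields $\wh\Ev^\rho\lvert Q(t)-V^\rho t\rvert\ge c\,t^{2/3}$.

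The main obstacle is the ZRP analogue of the microscopic-concavity coupling of Theorem~\ref{th:newcoupling}: one must couple two (ZRP, second class particle) pairs so that the distinguished second class particle in the denser system stays to the left of the one in the sparser system. This is exactly where the strong discrete-concavity hypothesis~\eqref{zrpass} on $\jr$ enters---plain concavity of $\jr$ does not suffice, since in a zero-range stack the \emph{rates} at which relabeling interactions occur must themselves be controlled, and the geometric decay of the second differences of $\jr$ is what makes the auxiliary-clock construction consistent. Secondary technical points are carrying the modified initial law $\wh\nu^\rho$ through all the stationary-process comparisons and verifying the Radon--Nikodym second-moment bound uniformly in $t$; both rest on the geometric tails of $\nu_0^\rho$ guaranteed by~\eqref{jrass}.
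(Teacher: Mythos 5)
Your overall outline matches the paper's own plan: once the variance identity \eqref{zrpVarJQ} is in hand, the ASEP argument from Sections \ref{asepubsec}--\ref{aseplbsec} is recycled, and the two ingredients to supply are (a) a geometric tail bound on the label of a distinguished second class particle inside a density of second class particles, and (b) a coupling that keeps the distinguished second class particle of the denser system behind that of the sparser one. The paper in fact provides both at once, via the joint label process $(y(t),z(t))$ of Section \ref{zrpcouplingsec} with $y\le z$.

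Where your plan goes wrong is in the claim that ``a distinguished second class particle gets compared to a \emph{one-sided nearest-neighbor walk}, and the resulting geometric tail bounds take the place of Lemmas~\ref{lm:RW} and~\ref{lm:mQ}.'' This does not hold for zero range. For ASEP the label $\Qlb$ genuinely performs a nearest-neighbor walk (it moves by $\pm 1$ and only when two $X$-particles are adjacent), so Lemma~\ref{lm:RW} applies. For TAZRP, many second class particles can stack at a site, and the label $y$ is \emph{resampled} among the extremal labels $\{a,b\}$ present at its site after every background jump affecting that site (rules \eqref{zrpyrule}--\eqref{zrpyrule2}); it is not a nearest-neighbor walk and can move arbitrarily far in one step. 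Total asymmetry does not make this simpler. The geometric tail bound $\Pv\{y(t)\ge k\}\le r^k$ is instead proved directly, by showing that \emph{every single resampling event preserves the geometric tail}, and this is exactly the step that forces the strong hypothesis \eqref{zrpass}: the telescoping product of ratios $\bigl(\jr(\ell)-\jr(\ell-1)\bigr)/\bigl(\jr(\ell+1)-\jr(\ell)\bigr)$ must be bounded by $r^{-(b-k)}$. You have \eqref{zrpass} entering in roughly the right place, but for the wrong reason: plain concavity of $\jr$ already makes the auxiliary-clock probabilities in the joint $(y,z)$ rule well defined and consistent, and it already preserves $y\le z$; the geometric concavity \eqref{zrpass} is needed precisely and only for the label tail bound, which is the serious bottleneck. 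As written, your sketch would lead you to try a random-walk domination that fails, so this gap must be fixed before the moment bounds can be closed.

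A secondary point: the two ingredients should not be treated as independent constructions. The paper builds a single coupled process with two labels $y\le z$ so that both $(\om-\delta_{X_y},\om)$ and $(\eta,\eta+\delta_{X_z})$ are simultaneously (ZRP, second class particle) pairs; the ordering $y\le z$ then delivers the microscopic-concavity inequality you need in the variance estimate, and the two tail bounds $\Pv\{y(t)\ge k\}\le r^k$, $\Pv\{z(t)\le -k\}\le r^k$ deliver the label control. The rest of your proposal (Jensen for the lower bound, integrating tail estimates for $1\le m<3$, the auxiliary density $\lambda=\rho-bt^{-1/3}$, the Radon--Nikodym factor whose $L^2$-norm is controlled because $n(\rho-\lambda)^2=O(1)$, and the final Chebyshev using strict concavity of $\flux$) tracks the paper correctly.
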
 

  The constants $C$ and how large $t$ needs to be may depend on the density $\rho$. 
 Combining \eqref{zrpgoal1} and \eqref{zrpQmom} 
 gives the  bounds for the variance of the current 
seen by an observer traveling at the 
characteristic speed $V^\rho$: for large enough $t$, 
\[
C^{-1}t^{2/3} \le \Var^\rho[J_{\fl{V^\rho t}}(t)] 
\le C t^{2/3}.
\]

For the remainder of this chapter we discuss parts of the proof.  
Once we have the fundamental identities that tie together moments of the 
second class particle and the variance of the current, 
the proofs for the upper and lower bounds given for ASEP in Sections 
 \ref{asepubsec} and  \ref{aseplbsec} can be adjusted to work  for TAZRP.  
 We will not repeat those derivations.   Instead, we focus on the 
   key ingredients that made the argument  work for ASEP,  
and discuss how to  provide these ingredients  for TAZRP. 
There are two key points that we need in order to repeat the argument for TAZRP:  

\begin{enumerate}
\item\label{a1}  We need a construction that includes  a given second class particle 
as a labeled member of  a density of second class particles, and then we need
a tail bound for the  label as the one given for ASEP in Lemma \ref{lm:mQ} and \eqref{mQ2}.  
\item\label{a2}  We need a coupling that keeps the second class particle of a 
  system with higher density  behind the second class particle of a system with 
  lower density. For ASEP this was Theorem \ref{th:newcoupling}, which was 
  used to obtain \eqref{eq:aux9} for the ASEP upper bound.  
\end{enumerate} 
We turn to these points in Section \ref{zrpcouplingsec} below, after developing
the variance formula.

 \subsection{Variance identity}\label{zrpvarsec} 
 
 Define  $F(-1)=0$ and 
 \[  F(k)=\sum_{m=k+1}^\infty (m-\rho) \frac{\nu_0^\rho(m)}{\nu_0^\rho(k)}
 = \frac{\Var^\rho(\eta_0)}{\nu_0^\rho(k)}\hat\nu_0^\rho(k), \quad k\ge 0. \]
Interpret below  $\nu_0^\rho(-1)$ as $0$.  
\begin{align*}
&\Cov^\rho[\eta_i(t), \eta_0(0)]=E^\rho[\eta_i(t)(\eta_0(0)-\rho)] 
=\sum_{k\ge 0}  E[\eta_i(t)\vert\eta_0(0)=k] (k-\rho) \nu_0^\rho(k)\\
&\quad = \sum_{k\ge 0}  E[\eta_i(t)\vert\eta_0(0)=k] \bigl(  F(k-1) \nu_0^\rho(k-1) -
F(k) \nu_0^\rho(k)  \bigr) \\
&\quad = \sum_{k\ge 0}  E[\eta_i(t)\vert\eta_0(0)=k+1]   F(k) \nu_0^\rho(k) 
-  \sum_{k\ge 0}  E[\eta_i(t)\vert\eta_0(0)=k] F(k) \nu_0^\rho(k).  \\
\end{align*}
Construct a coupling of $\eta^+(t)=\eta(t)+\delta_{Q(t)}$ with the  discrepancy 
initially at the origin $Q(0)=0$, and so that $\eta(0)$ has $\nu^\rho$-distribution. 
Then, due to the product form of the initial distribution, 
\begin{align*}
\Ev[\eta_i(t)\vert\eta_0(0)=k+1] = \Ev[\eta^+_i(t)\vert\eta^+_0(0)=k+1] 
= \Ev[\eta^+_i(t)\vert\eta_0(0)=k].  \end{align*} 
Then continuing from above, 
\begin{align*}
\Cov^\rho[\eta_i(t), \eta_0(0)]
&= \sum_{k\ge 0}  \Ev[\eta_i^+(t)\vert\eta_0(0)=k]   F(k) \nu_0^\rho(k) \\
&\qquad \qquad 
-  \sum_{k\ge 0}  \Ev[\eta_i(t)\vert\eta_0(0)=k] F(k) \nu_0^\rho(k)  \\
&= \sum_{k\ge 0}  \Ev[\eta_i^+(t)-\eta_i(t) \vert\eta_0(0)=k]   F(k) \nu_0^\rho(k)  \\
&=\Ev\bigl[ (\eta_i^+(t)-\eta_i(t)) F(\eta_0(0)) \bigr] \\[2pt]
&=\Ev\bigl[ \ind\{Q(t)=i\} F(\eta_0(0)) \bigr] \\
&=\sum_{k\ge 0} \Ev[ \ind\{Q(t)=i\} \vert \eta_0(0)=k]F(k) \nu_0^\rho(k) \\
&=\Var^\rho(\eta_0) \sum_{k\ge 0} \Ev[ \ind\{Q(t)=i\} \vert \eta_0(0)=k]\hat\nu_0^\rho(k) \\
&=\Var^\rho(\eta_0)\wh\Pv^\rho\{Q(t)=i\}. 
\end{align*} 
This proves \eqref{zrpCovQ}. 

\subsection{Coupling for the zero range process} \label{zrpcouplingsec}

Next we describe a coupling of two processes with labeled discrepancies
(second class particles)  between them, and then two randomly evolving labels
that achieve simultaneously both goals \eqref{a1} and \eqref{a2} 
mentioned above.  
This construction works for any TAZRP with concave jump rate function $\jr$. 
Getting tail bounds for the label processes   
  is the 
serious bottleneck of this proof, and that is where we need the restrictive
assumption \eqref{zrpass}.  

 Let two processes $\eta\le\om$ evolve in basic coupling.  
 This pair  $(\eta,\om)$  together with the labeled and ordered  $\om-\eta$ 
 second class particles 
 \(\dotsm\le X_{-2}(t)\le X_{-1}(t)\le X_0(t)\le X_1(t)\le X_2(t)\le\dotsm\)
 form a ``background'' process on which we define two label processes
 $y(t)$ and $z(t)$.  
 The $\om-\eta$ second class  particles are kept in order by requiring that,
 whenever a second class particle jumps to the right, the $X$-particle with highest
 label is moved.  
 
   The label processes will satisfy $y(t)\le z(t)$, we will
 be able to bound $y(t)$ stochastically from above,   $z(t)$ stochastically
 from below,  and the following  two pairs of processes will individually be in basic
 coupling:
 \be
 (\om^-,\om)=(\om-\delta_{X_y},\om)\quad\text{and}\quad (\eta,\eta^+)=(\eta,\eta+\delta_{X_z}).
 \label{zrppairs}\ee
 
 The definition of the label processes is partly forced on us by the 
 requirement  that jumps of $X_y$ and $X_z$ must replicate the rates required
 by the basic coupling.  Additionally, we devise the joint process $(y,z)$ so that the
 order $y\le z$ is maintained.  
 
 The rule is that after each jump among $(\om,\eta)$ 
  that in any way affects the site where $X_y$ resides,
 the value of $y$ is refreshed randomly.    Let $a$ and $b$ denote the minimal and
 maximal labels at the site $i$ where $X_y$ resides {\sl after} the jump.  If $X_y$ 
 resides at a site other than $X_z$, then 
 $y$ chooses a new value $y'$ according to these probabilities:
 \be y'= 
 \begin{cases}  
 a &\text{with probability \ }\ddd\frac{\rr(\om_i-1)-\rr(\eta_i)}{\rr(\om_i)-\rr(\eta_i)}  \\[12pt]
b &\text{with probability \ } \ddd\frac{\rr(\om_i)-\rr(\om_i-1)}{\rr(\om_i)-\rr(\eta_i)}.
 \end{cases}
 \label{zrpyrule}\ee
 If $\rr(\om_i)-\rr(\eta_i)=0$ then $y'=a$.  
 
 Similarly, after a jump in  the background process that affects the site 
 where  $X_z$ resides,  if $X_z$ and $X_y$ are not together, then $z$ takes the
 new value $z'$ as follows (with   $b$ again the maximal label at the site 
  $i=X_z$ after the jump): 
  \[ z'= 
 \begin{cases}  
 b-1 &\text{with probability \ }\ddd\frac{\rr(\om_i)-\rr(\eta_i+1)}{\rr(\om_i)-\rr(\eta_i)}  \\[12pt]
b &\text{with probability \ } \ddd\frac{\rr(\eta_i+1)-\rr(\eta_i)}{\rr(\om_i)-\rr(\eta_i)}. 
 \end{cases}
 \]
 If $\rr(\om_i)-\rr(\eta_i)=0$ then $z'=b$.  

Finally, if after the jump $X_y=X_z=i$, then the labels are refreshed jointly 
as follows:
  \[ \binom{y'}{z'}= 
 \begin{cases}  
\ddd \binom{a}{b-1} &\text{with probability \ }\ddd\frac{\rr(\om_i)-\rr(\eta_i+1)}{\rr(\om_i)-\rr(\eta_i)}  
\\[15pt]
\ddd\binom{a}{b} &\text{with probability \ } 
\ddd\frac{\rr(\eta_i+1)-\rr(\eta_i)}{\rr(\om_i)-\rr(\eta_i)}  \\[10pt]
 & \qquad\qquad \;-\;
\ddd\frac{\rr(\om_i)-\rr(\om_i-1)}{\rr(\om_i)-\rr(\eta_i)}  \\[15pt]
\ddd\binom{b}{b} &\text{with probability \ } \ddd\frac{\rr(\om_i)-\rr(\om_i-1)}{\rr(\om_i)-\rr(\eta_i)}. 
 \end{cases}
 \]
 If $\rr(\om_i)-\rr(\eta_i)=0$ then $(y',z')=(a,b)$.  
 
 The jump rules preserve $y\le z$.  Note that marginally $y'$ obeys probabilities
 \eqref{zrpyrule}, and similarly for $z'$.  Concavity of $\jr$ was used  to define the
 middle case in the joint rule.  
 
 Let us observe why these rules give the pair $ (\om^-,\om)=(\om-\delta_{X_y},\om)$
 the same rates this pair would have in basic coupling.  The requirement is that 
 a jump across edge $(i,i+1)$ occur for both processes at rate $\jr(\om^-_i)$,
 and only for $\om$ at rate $\jr(\om_i)-\jr(\om^-_i)$.   This requires thinking through
 a few cases. Only the site  where $X_y$ resides needs attention since elsewhere
$ (\om^-,\om)$ jump together according to ZRP rates.   
 \begin{enumerate}
 \item[(i)]    In the basic coupling of $(\eta,\om)$,  an $(i,i+1)$ jump occurs
 in $\eta$ at rate $\jr(\eta_i)$. Then both $\om$ and $\om^-$ experience this jump. 
 \item[(ii)]  An $\om-\eta$ second class particle jumps at rate $\jr(\om_i)-\jr(\eta_i)$.
   Prior to this jump $y$ chose the top label with probability 
   given by the second line of \eqref{zrpyrule}, hence the rate at which $X_y$ jumps
   is 
   \[  \bigl( \jr(\om_i)-\jr(\eta_i)\bigr)\cdot 
    \frac{\rr(\om_i)-\rr(\om_i-1)}{\rr(\om_i)-\rr(\eta_i)} = \rr(\om_i)-\rr(\om_i-1). \]
    Thus at this rate $\om$ experiences the jump but $\om^-$ does not.
    
    If prior to this jump  $y$ chose the bottom label, then both $\om$ and $\om^-$ 
    experience this jump, and this happens with rate 
   \[  \bigl( \jr(\om_i)-\jr(\eta_i)\bigr)\cdot 
    \frac{\rr(\om_i-1)-\rr(\eta_i)}{\rr(\om_i)-\rr(\eta_i)} = \rr(\om_i-1)-\rr(\eta_i). \]     
   \end{enumerate} 
  Adding up the rates we see that the rates of basic coupling have been realized.  
A similar argument is given for $(\eta,\eta^+)=(\eta,\eta+\delta_{X_z}).$

\medskip

We come to the unique point in the  proof where assumption \eqref{zrpass}
is used, namely the tail bounds for the labels.

\begin{lemma}  Let the labels start with $y(0)=z(0)=0$.  
Under assumption \eqref{zrpass} we have these bounds:
$\Pv\{y(t)\ge k\}\le r^k$  and $\Pv\{z(t)\le -k\}\le r^k$  for all $k\in\bZ_+$ and $t\ge 0$.
\end{lemma}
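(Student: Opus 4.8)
The plan is first to reduce to the single bound $\Pv\{y(t)\ge k\}\le r^{k}$: the companion estimate $\Pv\{z(t)\le -k\}\le r^{k}$ is its mirror image under a particle–hole interchange together with a reflection of the lattice, exactly as the lower-tail version of \eqref{mQ2} was obtained from the upper-tail version in the ASEP setting (cf.\ Lemma~\ref{lm:UB2} and its reference). So from now on I focus on $y$, and I would begin by recording the structure of $t\mapsto y(t)$. It is a pure-jump process, constant except at the \emph{refresh times} --- the times at which a jump of the background pair $(\eta,\om)$ affects the site currently occupied by $X_y$ --- and between two refreshes the pile of $\om-\eta$ second class particles at $X_y$'s site does not change, so at every time $y(t)$ equals either the smallest label $a$ or the largest label $b$ of its current pile. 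Writing $n=\om_i-\eta_i=b-a+1$ for the size of that pile (at a site $i$ holding $X_y$), the value of $y$ can \emph{increase} only in two ways: either $X_y$ sits at the bottom of its pile and is resampled to the top, or $X_y$ itself takes one step to the right and is then resampled to the top of the pile it lands in.

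In either case the increase equals $n'-1$, where $n'$ is the size of the relevant pile, and by the resampling rule \eqref{zrpyrule} it occurs with conditional probability $\dfrac{\jr(\om_i)-\jr(\om_i-1)}{\jr(\om_i)-\jr(\eta_i)}$ (with $\om_i,\eta_i$ read off that pile). This is exactly the point where \eqref{zrpass} enters: since
\[
\jr(\om_i)-\jr(\eta_i)=\sum_{j=\eta_i}^{\om_i-1}\bigl(\jr(j+1)-\jr(j)\bigr)
\]
is a sum of $n'$ consecutive increments of $\jr$, each of which is by \eqref{zrpass} at least $r^{-1}$ times the next one, the last increment $\jr(\om_i)-\jr(\om_i-1)$ is the smallest, whence
\[
\frac{\jr(\om_i)-\jr(\om_i-1)}{\jr(\om_i)-\jr(\eta_i)}\ \le\ \frac{1}{1+r^{-1}+\dots+r^{-(n'-1)}}\ =\ \frac{(1-r)\,r^{\,n'-1}}{1-r^{\,n'}}\ \le\ r^{\,n'-1}.
\]
A short case check (first-class arrival or departure at $X_y$'s site, arrival of another second class particle, or $X_y$'s own rightward step) shows that, uniformly, any upward move of $y$ of size $s\ge 1$ has conditional probability at most $r^{s}$, and that $y$ never increases at any other refresh. (The resampling of $y$ and $z$ being joint when $X_y=X_z$ is irrelevant here, since the marginal law of the new value of $y$ is still the one in \eqref{zrpyrule}.)

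Given this, the plan is to dominate $y(\cdot)$ by a stationary process whose one-dimensional marginals have a geometric tail of ratio $r$. Since the label $y$ does not influence the background, I would first generate the pair $(\eta,\om)$ together with the ordered trajectories of all the $\om-\eta$ second class particles, thereby freezing the ``environment'' through which $y$ moves and turning $y(\cdot)$ into a jump process in a prescribed environment --- the same decoupling device that makes Lemma~\ref{lm:RW} applicable in the proof of Lemma~\ref{lm:mQ}. In that frozen environment I would couple $y(\cdot)$ from above with a process $\widetilde y(\cdot)$ that performs all the same non-increasing moves as $y$ but, at each refresh at which $y$ could move up by $s$, moves up by $s$ with probability $r^{s}$ (replacing the smaller true probability by this larger one only pushes the process up, so $\widetilde y(t)\ge y(t)$ as soon as $\widetilde y(0)\ge y(0)=0$), started from its own stationary law. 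It then remains to verify that this stationary law --- equivalently, the invariant distribution of the embedded jump chain of $\widetilde y$, a down-drifting chain on $\bZ$ with geometrically rare long upward jumps --- puts mass at most $r^{k}$ on $[k,\infty)$; this is a cut / flow-balance computation across $\{<k\}$ versus $\{\ge k\}$ in which the geometric profile $\propto r^{m}$ balances precisely because of the estimate $\tfrac{\jr(\om_i)-\jr(\om_i-1)}{\jr(\om_i)-\jr(\eta_i)}\le r^{\,n'-1}$, i.e.\ because of \eqref{zrpass}. Feeding this back yields $\Pv\{y(t)\ge k\}\le\Pv\{\widetilde y(t)\ge k\}\le r^{k}$ for all $t$, and then the bound for $z$ follows by the symmetry noted at the outset.

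The hard part is this last step. Because $y$ can jump up by the whole height of a pile, a comparison with a nearest-neighbour walk as in Lemma~\ref{lm:RW} is not available: the envelope must be allowed arbitrarily long upward jumps, and one has to show that a chain with downward drift and only geometrically rare (in the jump length) upward jumps still has a geometric invariant tail with the correct ratio $r$. A secondary, more bookkeeping-type, difficulty is to make the freezing of the background genuinely decouple the environment from $y$ across \emph{all} the refresh mechanisms simultaneously --- first-class arrivals and departures at $X_y$'s site, arrivals of other second class particles there, and the resamplings triggered by $X_y$'s own rightward steps --- so that the monotone coupling $\widetilde y(t)\ge y(t)$ really does hold.
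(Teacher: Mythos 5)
Your key observation — that \eqref{zrpass} gives $\jr(\om_i)-\jr(\om_i-1)\le r^{n'-1}\bigl(\jr(\om_i)-\jr(\eta_i)\bigr)$ for a pile of size $n'=\om_i-\eta_i$, so the conditional probability that $y$ is resampled to the top of its pile is at most $r^{\,n'-1}$ — is correct, and it is indeed the place where \eqref{zrpass} enters. The gap is everything after that. Your plan is to finish in the spirit of Lemma \ref{lm:RW}, by constructing a stationary dominating process $\widetilde y$ started from an invariant geometric law and coupled above $y$. For the ASEP label this works because the dominating chain is a nearest-neighbour birth-and-death chain, so the geometric law is reversible and stationarity is a detailed-balance check at each edge. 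Here the label can jump up and down by pile-sized amounts, the dominating chain must have unbounded jumps, detailed balance fails, the downward moves are dictated by an adversarial frozen environment, and — as you say yourself in your closing paragraph — the verification that such a chain has a geometric invariant tail with ratio $r$ is not carried out. That verification \emph{is} the lemma; without it the argument does not close. The monotone coupling $\widetilde y(t)\ge y(t)$ is also not clean, since a refresh \emph{resets} $y$ to the extreme label $a$ or $b$ of its pile rather than adding an increment, so ``performing the same non-increasing move'' is ambiguous whenever $\widetilde y\neq y$.

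The paper avoids the invariant-measure route entirely. Having frozen the environment, it proves a one-step \emph{preservation} statement: if a $\bZ$-valued random variable $y$ satisfies $P(y\ge k)\le r^k$ for all $k\ge 0$, then one refresh of type \eqref{zrpyrule1}--\eqref{zrpyrule2} — which merely moves the mass of $\{a,\dots,b\}$ onto $\{a,b\}$ — again yields $P(y'\ge k)\le r^k$. Only $P(y'\ge b)\le r^b$ needs checking, and this uses \eqref{zrpass} through the sharper pointwise bound
\[
(1-r)r^k\;\le\;(1-r)r^b\cdot\frac{\jr(\om_i-b+k)-\jr(\om_i-b+k-1)}{\jr(\om_i)-\jr(\om_i-1)}\qquad (a\le k\le b),
\]
summed over $k$ and combined with an exact decomposition of $P(a\le y\le b)$ around the geometric weights. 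Note that plugging in only your cruder ratio bound $\le r^{\,b-a}$ gives $P(y'\ge b)\le r^b+r^{b+1}$, which overshoots; the extra cancellation coming from the finer inequality is what makes the preservation argument close. Iterating the one-step statement from the deterministic start $y(0)=0$ (which trivially satisfies the geometric tail bound) yields the claim for all $t$, with no stationary distribution, no invariant-tail computation, and no dominating process; the bound for $z$ is the mirror argument with the roles of $a$ and $b$ exchanged.
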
 

\begin{proof}  We do the proof for $y(t)$. The bounds are valid conditionally
on the evolution $(\eta,\om)$ of the background process.  So assume this background 
evolution given.  Then we think of $y(t)$   as an integer-valued  Markov chain 
that is subject to jumps  triggered by the background
environment.   Each 
   jump happens 
  at some   site $i$ with range of labels  $\{a,\dotsc,b\}$  and  occupation variables   
 $\om_i>\eta_i\ge 0$ that together satisfy  \[\om_i-\eta_i=b-a+1.\]  
Given the current value $y$,  the  new   value $y'$   is obtained by the following rules, which
of course are consistent with \eqref{zrpyrule}:  
 if $\rr(\om_i)-\rr(\eta_i)=0$ then 
  \be y'= 
 \begin{cases}  
 y &\text{if $y<a$ or $y>b$} \\
 a &\text{ if $y\in \{a,\dotsc,b\}$}  
 \end{cases}
 \label{zrpyrule1}\ee 
while  if $\rr(\om_i)-\rr(\eta_i)>0$ then  
  \be y'= 
 \begin{cases}  
 y &\text{if $y<a$ or $y>b$} \\
 a &\text{with probability \ }\ddd\frac{\rr(\om_i-1)-\rr(\eta_i)}{\rr(\om_i)-\rr(\eta_i)} 
 \ \text{ if $y\in \{a,\dotsc,b\}$}  \\[12pt]
b &\text{with probability \ } \ddd\frac{\rr(\om_i)-\rr(\om_i-1)}{\rr(\om_i)-\rr(\eta_i)}
 \ \text{ if $y\in \{a,\dotsc,b\}$}.
 \end{cases}
 \label{zrpyrule2}\ee
 There are  infinitely many such jumps in any finite time interval, but all but finitely
 many leave $y$ unchanged. 
 To get around this difficulty, we can first freeze all the Poisson clocks outside
 space interval $[-M,M]$, prove the lemma there, and then let $M\nearrow\infty$. 
 Since rates are bounded, in any given bounded block of space-time  the 
 finite-$M$ process  agrees with the infinite process 
  for all large enough $M$. 

To prove the lemma we show that every  jump of type  \eqref{zrpyrule1}--\eqref{zrpyrule2}
 preserves the geometric tail bound,    
 regardless of the values
  $a, b, \om_i, \eta_i$.  
  So suppose $y$ is an integer-valued  random variable 
 such that 
  \[  P(y\ge k)\le r^k\quad\text{ for}\quad  k\ge 0, \]   
  and define $y'$ via  \eqref{zrpyrule1}--\eqref{zrpyrule2}.
  We wish to show that  $P(y'\ge k)\le r^k$ for $k\ge 0$. 
  
 The case \eqref{zrpyrule1} is clear since there $y'\le y$. Let us  consider the case 
$\rr(\om_i)-\rr(\eta_i)>0$.  
   Since the jump only  redistributes the probability
  mass in $\{a,\dotsc,b\}$ to  $\{a,b\}$, 
 it suffices to check that 
 \be   P(y'\ge b)\le r^b  \label{zrpcheck2}\ee 
 in the case $b\ge 0$. 
 Using the jump rule \eqref{zrpyrule2},
\begin{align*}
P(y'\ge b)&=P(y'=b)+P(y'\ge b+1) \\
&= \frac{\rr(\om_i)-\rr(\om_i-1)}{\rr(\om_i)-\rr(\eta_i)}  P(a\le y\le b)  +P(y \ge b+1). 
\end{align*}
If $\rr(\om_i)-\rr(\om_i-1)=0$ the conclusion \eqref{zrpcheck2} follows from the
assumption on $y$.  So we assume $\rr(\om_i)-\rr(\om_i-1)>0$.  
Then by concavity all the $\jr$-increments 
 between $\eta_i,\dotsc,\om_i$ are positive.  Next write 
\be \begin{aligned}
&P(y'\ge b) =\frac{\rr(\om_i)-\rr(\om_i-1)}{\rr(\om_i)-\rr(\eta_i)}\sum_{k=a}^b (1-r)r^k \\
&\qquad +  \frac{\rr(\om_i)-\rr(\om_i-1)}{\rr(\om_i)-\rr(\eta_i)} \bigl( P(a\le y\le b) -r^a+r^{b+1}\bigr)\\
&\qquad\qquad \qquad \qquad  +P(y \ge b+1).
\end{aligned} \label{zrptemp1}\ee
For $a\le k\le b$
 \begin{align*}
  (1-r)r^k&= (1-r)r^b\cdot\frac1{r^{b-k}} 
\le  (1-r)r^b \prod_{\ell=\om_i-b+k}^{\om_i-1}  \frac{\jr(\ell)-\jr(\ell-1)}{\jr(\ell+1)-\jr(\ell)}\\
&=  (1-r)r^b  \cdot\frac{\jr(\om_i-b+k)-\jr(\om_i-b+k-1)}{\jr(\om_i)-\jr(\om_i-1)}. 
 \end{align*}
Adding these up over $a\le k\le b$ gives  
\begin{align*}
&\text{first term on the right in  \eqref{zrptemp1}}\\
&\quad \le \frac{\rr(\om_i)-\rr(\om_i-1)}{\rr(\om_i)-\rr(\eta_i)}  (1-r)r^b \\
&\qquad \qquad\qquad\times \sum_{k=a}^b 
\frac{\jr(\om_i-b+k)-\jr(\om_i-b+k-1)}{\jr(\om_i)-\jr(\om_i-1)}\\
&\quad =   (1-r)r^b \cdot \frac{\jr(\om_i)-\jr(\om_i-b+a-1)}{\rr(\om_i)-\rr(\eta_i)} 
=  (1-r)r^b=r^b-r^{b+1} .
\end{align*}
 Substitute this 
bound back up to   \eqref{zrptemp1} and use $P(y \ge k) \le r^{k}$ twice:   
\begin{align*}
P(y'\ge b) &\le  r^b +   P(y \ge b+1) -r^{b+1}  \\
&\qquad 
  +  \frac{\rr(\om_i)-\rr(\om_i-1)}{\rr(\om_i)-\rr(\eta_i)} \bigl( P(a\le y\le b) -r^a+r^{b+1}\bigr)\\
 &\le  r^b  
  +  \frac{\rr(\om_i)-\rr(\om_i-1)}{\rr(\om_i)-\rr(\eta_i)} \bigl( P(y\ge a) -r^a \bigr)\\ 
  &\le  r^b.  
\end{align*}
Thus  \eqref{zrpcheck2}  has been checked and thereby the lemma has been
proved for  $y(t)$.  
  \end{proof}

\subsection*{References}
 The results  for ZRP are proved in  \cite{bala-komj-sepp}.  
 The case of constant jump rate
was done earlier in  \cite{bala-komj-08}.

\providecommand{\bysame}{\leavevmode\hbox to3em{\hrulefill}\thinspace}
\providecommand{\MR}{\relax\ifhmode\unskip\space\fi MR }
\providecommand{\MRhref}[2]{%
  \href{http://www.ams.org/mathscinet-getitem?mr=#1}{#2}
}
\providecommand{\href}[2]{#2}

\vskip .3in

\noindent {{Timo Sepp\"al\"ainen}}

\noindent {{Mathematics Department, University of Wisconsin-Madison}}


\noindent {{Madison, Wisconsin 53706, USA}}

\noindent {{seppalai@math.wisc.edu}}

\noindent {http://{www.math.wisc.edu/$\sim$seppalai}}
  
\end{document}